\newtheorem{definition}{Definition}[section]
\newtheorem{theorem}{Theorem}[section]
\newtheorem{lemma}{Lemma}[section]
\newtheorem*{maintheorem*}{Main Theorem}
\numberwithin{equation}{section}
\newcommand{\Kruzkov}{Kru\v{z}kov~}
\newcommand{\Set}[1]{\left\{#1\right\}}
\newcommand{\norm}[1]{\left\| #1 \right\|}
\newcommand{\eps}{\varepsilon}
\newcommand{\ed}{{\eps,\delta}}
\newcommand{\ued}{u_\ed}
\newcommand{\Ped}{P_\ed}
\newcommand{\ue}{u_\eps}
\newcommand{\uek}{u_{\eps_k}}
\newcommand{\Pe}{P_\eps}
\newcommand{\Pek}{P_{\eps_k}}
\newcommand{\Fe}{F_\eps}
\newcommand{\pt}{\partial_t}
\newcommand{\px}{\partial_x }
\newcommand{\pxx}{\partial_{xx}^2}
\newcommand{\ptx}{\partial_{tx}^2}
\renewcommand{\i}{\ifmmode\mathit{\mathchar"7010 }\else\char"10 \fi}
\renewcommand{\j}{\ifmmode\mathit{\mathchar"7011 }\else\char"11 \fi}
\newcommand{\R}{\mathbb{R}}
\newcommand{\N}{\mathbb{N}}
\newcommand{\Hneg}{H_{\mathrm{loc}}^{-1}}
\newcommand{\CL}{\mathcal{L}}
\newcommand{\CLea}{\mathcal{L}}
\newcommand{\sgn}[1]{\mathrm{sign}\left(#1\right)}
\begin{document}\large

\title[The short pulse equation]{Wellposedness results \\ for the Short Pulse Equation}
\author[G. M. Coclite and L. di Ruvo]{Giuseppe Maria Coclite and Lorenzo di Ruvo}
\address[Giuseppe Maria Coclite and Lorenzo di Ruvo]
{\newline Department of Mathematics,   University of Bari, via E. Orabona 4, 70125 Bari,   Italy}
\email[]{giuseppemaria.coclite@uniba.it, lorenzo.diruvo@uniba.it}
\urladdr{http://www.dm.uniba.it/Members/coclitegm/}
\date{\today}

\keywords{Existence, uniqueness, stability, entropy solutions, conservation laws,
short pulse equation, Cauchy problem, boundary value problems.}

\subjclass[2000]{35G15, 35G25, 35L65, 35L05, 35A05}

\begin{abstract}
The short pulse equation provides a model for the propagation of ultra-short light pulses in silica optical fibers. It is a nonlinear evolution equation.
In this paper  the wellposedness of bounded solutions for the homogeneous initial boundary value problem and the Cauchy problem associated to this equation are studied.
\end{abstract}

\maketitle


\section{Introduction}
The short pulse equation which has the form
\begin{equation}
\label{eq:SPE}
\px\left(\pt u -\frac{1}{6}\px u^3\right)= \gamma u, \quad \gamma>0,
\end{equation}
up to a scale transformation of its variables, was introduced recently by
Sch\"afer and Wayne \cite{SW} as  a model equation describing the propagation of ultra-short light pulses in silica optical fibers. 
It provides also an approximation of nonlinear wave packets in dispersive media in the limit of few cycles on the ultra-short pulse scale.
Numerical simulations \cite{CJSW} show that the short pulse equation approximation 
to Maxwell's equations in the case when the pulse spectrum is not narrowly localized 
around the carrier frequency is better than the one obtained from the nonlinear Schr\"odinger 
equation, which models the evolution of slowly varying wave trains.
Such ultra-short plays a key role in the development of future technologies of ultra-fast optical transmission of informations.

In \cite{B} the author studied a new hierarchy of equations containing the short pulse equation \eqref{eq:SPE} and the elastic 
beam equation, which describes nonlinear transverse oscillations of elastic beams 
under tension. He showed that the hierarchy of equations is integrable. He obtained the two compatible Hamiltonian structures
and constructs an infinite series of both local and nonlocal conserved charges. Moreover, he gave the Lax description for both systems. 
The integrability and the existence of solitary wave solutions have been studied in \cite{SS, SS1}.

Well-posedness and wave breaking for the short pulse equation have been 
studied in \cite{SW} and \cite{LPS}, respectively. 
Our aim is to investigate the  well-posedness in classes of discontinuous functions for \eqref{eq:SPE}.
We consider both the initial boundary value problem  (see Section \ref{sec:1}) and the Cauchy problem (see Section \ref{sec:2}) for \eqref{eq:SPE}.

Integrating \eqref{eq:SPE} in $x$ we gain the integro-differential formulation of  \eqref{eq:SPE} (see \cite{SS})
\begin{equation*}
\pt u -\frac{1}{6} \px u^3=\gamma \int^x u(t,y) dy,
\end{equation*}
that is equivalent to
\begin{equation}
\label{eq:integ}
\pt u-\frac{1}{6} \px u^3=\gamma P,\qquad \px P=u.
\end{equation}

One of the main issues in the analysis of \eqref{eq:integ} is that the equation is not preserving the $L^1$ norm, the unique useful conserved quantities are 
\begin{equation*}
t\longmapsto\int u(t,x)dx,\qquad t\longmapsto\int u^2(t,x)dx.
\end{equation*}
As a consequence the nonlocal source term $P$ and the solution $u$ are a priori only locally bounded. 
Since we are interested in the bounded solutions of \eqref{eq:SPE}, some assumptions on the decay at infinity of the initial condition $u_0$ is needed.
 Regarding the flux function, here we use the cubic one 
\begin{equation*}
u\longmapsto- \frac{u^3}{6}
\end{equation*}
because this is the one that appears in the original short-pulse equation. Anyway all our arguments can be generalized to subcubic genuinely nonlinear fluxes.
The genuine nonlinearity assumption is necessary for the compactness argument based on the compensated compactness. 
The subcubic assumption together with the assumptions on the on the decay at infinity of the initial condition $u_0$ guarantees the boundedness of the solutions.


Our existence argument is based on passing to the limit using a compensated compactness argument \cite{TartarI} in a vanishing viscosity approximation of \eqref{eq:SPE}:
\begin{equation*}
\pt \ue-\frac{1}{6}\px \ue^3=\gamma\Pe+ \eps\pxx\ue,\qquad -\eps\pxx\Pe+\px\Pe=\ue.
\end{equation*}
On the other hand we use the \Kruzkov doubling of variables  method  \cite{K} for the uniqueness and stability of the solutions of \eqref{eq:SPE}.

\section{The initial boundary value problem}\label{sec:1}
In this section, we augment \eqref{eq:SPE} with the boundary condition
\begin{equation}
\label{eq:boundary}
u(t,0)=0, \qquad t>0,
\end{equation}
and the initial datum
\begin{equation}
\label{eq:init}
u(0,x)=u_0(x), \qquad x>0.
\end{equation}
We assume that
\begin{equation}
\label{eq:assinit}
u_0\in L^{\infty}(0,\infty)\cap L^{1}(0,\infty), \quad \int_{0}^{\infty}u_{0}(x) dx =0.
\end{equation}
On the function
\begin{equation}
\label{eq:def-di-P0}
P_{0}(x)=\int_{0}^{x}u_{0}(y)dy,
\end{equation}
we assume that
\begin{equation}
\label{eq:l-2-di-P0}
\norm{P_{0}}^2_{L^2(0,\infty)}=\int_{0}^{\infty}\left(\int_{0}^{x} u_{0}(y)dy\right)^2 dx < \infty.
\end{equation}

Integrating \eqref{eq:SPE} on $(0,x)$ we obtain the integro-differential formulation of the initial-boundary value problem \eqref{eq:SPE}, \eqref{eq:boundary},
  \eqref{eq:init} (see \cite{SS})
\begin{equation}
\label{eq:SPE-u}
\begin{cases}
\displaystyle \pt u-\frac{1}{6}\px u^3=\gamma \int^x_0 u(t,y) dy,&\qquad t>0, \ x>0,\\
u(t,0)=0,& \qquad t>0,\\
u(0,x)=u_0(x), &\qquad x>0.
\end{cases}
\end{equation}
This is equivalent to
\begin{equation}
\label{eq:SPEw}
\begin{cases}
\displaystyle\pt u-\frac{1}{6}\px u^3=\gamma P,&\qquad t>0, \ x>0 ,\\
\px P=u,&\qquad t>0, \ x>0,\\
u(t,0)=0, &\qquad t>0,\\
P(t,0)=0,& \qquad t>0,\\
u(0,x)=u_0(x), &\qquad x>0.
\end{cases}
\end{equation}
Due to the regularizing effect of the P equation in \eqref{eq:SPEw} we have that
\begin{equation}
\label{eq:SPEsmooth}
    u\in L^{\infty}((0,T)\times(0,\infty))\Longrightarrow P\in L^{\infty}((0,T);W^{1_,\infty}(0,\infty)), \quad T>0.
\end{equation}
Therefore, if a map $u\in  L^{\infty}((0,T)\times(0,\infty)),\,T>0,$  satisfies, for every convex
map $\eta\in  C^2(\R)$,
\begin{equation}
\label{eq:SPEentropy}
   \pt \eta(u)+ \px q(u)-\gamma\eta'(u) P\le 0, \qquad     q(u)=-\int^u \frac{\xi^2}{2} \eta'(\xi)\, d\xi,
\end{equation}
in the sense of distributions, then \cite[Theorem 1.1]{CKK} provides the existence of strong trace $u^\tau_0$ on the
boundary $x=0$.

\begin{definition}
\label{def:sol}
We say that  $u\in  L^{\infty}((0,T)\times(0,\infty))$, $T>0,$ is an entropy solution of the initial-boundary
value problem \eqref{eq:SPE}, \eqref{eq:boundary}, and  \eqref{eq:init} if
\begin{itemize}
\item[$i$)] $u$ is a distributional solution of \eqref{eq:SPE-u} or equivalently of \eqref{eq:SPEw};
\item[$ii$)] for every convex function $\eta\in  C^2(\R)$ the
entropy inequality \eqref{eq:SPEentropy} holds in the sense of distributions in $(0,\infty)\times(0,\infty)$;
\item[$iii$)] for every convex function $\eta\in  C^2(\R)$ with corresponding $q$ defined by $q'(u)=-\frac{u^2}{2}\eta'(u)$,
the boundary entropy condition
\begin{equation}
\label{eq:SPEentropyboundary}
\begin{split}
q(u^\tau_0(t))-q(0)-\eta'(0)\frac{(u_0^\tau(t))^3}{6}\le 0
\end{split}
\end{equation}
holds for a.e.~$t\in(0,\infty)$, where $u_0^\tau(t)$ is the trace of $u$ on the
boundary $x=0$.
\end{itemize}
\end{definition}
We observe that the previous definition is equivalent to the following family of inequalities
inequality \\(see \cite{BRN}):
\begin{equation}
\label{eq:ent1}
\begin{split}
\int_{0}^{\infty}\!\!\!\!\int_{0}^{\infty}(\vert u - c\vert\pt\phi&-\sgn{u-c}\left(\frac{u^3}{6}-\frac{c^3}{6}\right)\px\phi)dtdx\\
&+\gamma\int_{0}^{\infty}\!\!\!\!\int_{0}^{\infty}\sgn{u-c}Pdtdx\\
&+\int_{0}^{\infty}\sgn{c}\left(\frac{((u^\tau_0(t)))^3}{6}-\frac{c^3}{6}\right)dt\\
&+\int_{0}^{\infty}\vert u_{0}(x)-c\vert\phi(0,x)dx\geq 0,
\end{split}
\end{equation}
for every non-negative test function  $\phi\in C^{\infty}(\R^2)$ with
compact support, and for every $c\in \R$.

The main result of this section is the following theorem.
\begin{theorem}
\label{th:main}
Assume  \eqref{eq:assinit} and \eqref{eq:l-2-di-P0}.
The initial-boundary value problem
\eqref{eq:SPE}, \eqref{eq:boundary} and \eqref{eq:init} possesses
an unique entropy solution $u$ in the sense of Definition \ref{def:sol}. In particular, we have that
\begin{equation}
\label{eq:u-media-nulla}
\int_{0}^{\infty}u(t,x)dx =0, \quad t>0.
\end{equation}
Moreover, if $u$ and $v$ are two entropy solutions \eqref{eq:SPE}, \eqref{eq:boundary}, \eqref{eq:init} in the sense of Definition \ref{def:sol}, the following inequality holds
 \begin{equation}
 \label{eq:stability}
\norm{u(t,\cdot)-v(t,\cdot)}_{L^1(0,R)}\le  e^{C(T) t}\norm{u(0,\cdot)-v(0,\cdot)}_{L^1(0,R+C(T)t)},
\end{equation}
for almost every $0<t<T$, $R>0$, and some suitable constant $C(T)>0$.
\end{theorem}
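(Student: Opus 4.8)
The plan is to split the statement into an existence part, treated by a vanishing viscosity approximation together with compensated compactness, and a uniqueness--stability part, treated by \Kruzkovs doubling of variables adapted to the nonlocal source. For existence I would regularize \eqref{eq:SPEw} by the parabolic system
\[
\pt \ue - \frac{1}{6}\px \ue^3 = \gamma \Pe + \eps \pxx \ue, \qquad -\eps \pxx \Pe + \px \Pe = \ue,
\]
on $(0,\infty)$, keeping the boundary conditions $\ue(t,0)=0$, $\Pe(t,0)=0$, imposing compatible decay at infinity, and replacing $u_0$ by a smooth approximation $u_{0,\eps}$ that respects \eqref{eq:assinit} and \eqref{eq:l-2-di-P0}. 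Standard parabolic theory furnishes smooth solutions, so the whole difficulty lies in producing a priori bounds that are uniform in $\eps$. Testing the first equation against $\ue$ and exploiting the elliptic regularity of the second gives control of $\norm{\ue(t,\cdot)}_{L^2(0,\infty)}$ and $\norm{\Pe(t,\cdot)}_{L^2(0,\infty)}$ via the conserved $L^2$ energy, while the $\Pe$ equation reproduces the smoothing \eqref{eq:SPEsmooth} in the form $\norm{\Pe}_{W^{1,\infty}}\le C\norm{\ue}_{L^\infty}$. The delicate estimate is the uniform $L^\infty$ bound on $\ue$: this is precisely where the subcubic shape of the flux together with the decay hypotheses \eqref{eq:assinit}--\eqref{eq:l-2-di-P0} on $u_0$ and $P_0$ must be used to close the bound and exclude blow-up on $[0,T]$.

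With these bounds in hand I would invoke the compensated compactness framework of Tartar \cite{TartarI}. The uniform $L^\infty$ bound together with the viscous dissipation $\eps\norm{\px\ue}^2_{L^2}$ ensures that, for every smooth entropy pair $(\eta,q)$ with $q'=-\frac{\xi^2}{2}\eta'$, the distributions $\pt\eta(\ue)+\px q(\ue)$ remain in a compact subset of $\Hneg$. Since the cubic flux $-u^3/6$ is genuinely nonlinear, being affine on no nondegenerate interval, the div--curl lemma forces the associated Young measure to collapse to a Dirac mass, yielding $\ue\to u$ a.e.~along a subsequence. Passing to the limit in the viscous entropy inequalities produces \eqref{eq:SPEentropy}; the strong-trace theorem \cite[Theorem 1.1]{CKK} supplies $u_0^\tau$ and the viscous boundary data pass into \eqref{eq:SPEentropyboundary}, while \eqref{eq:u-media-nulla} follows by letting $\eps\to0$ in the viscous identity $\int_0^\infty\ue(t,x)\dx=0$, which holds because of the homogeneous boundary conditions.

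For uniqueness and the stability estimate \eqref{eq:stability} I would run \Kruzkovs doubling of variables on two entropy solutions $u$ and $v$. Combining the inequalities \eqref{eq:ent1} for $u$ tested against $c=v$ and for $v$ tested against $c=u$, and then collapsing the doubled variables, yields the Kato-type inequality
\[
\pt\abs{u-v}+\px\!\left[\sgn{u-v}\Big(\frac{v^3}{6}-\frac{u^3}{6}\Big)\right]\le\gamma\,\sgn{u-v}\,(P_u-P_v)
\]
in the sense of distributions, with potentials $P_u=\int_0^x u\,\dy$ and $P_v=\int_0^x v\,\dy$. The step I expect to be the main obstacle is the nonlocality of the right-hand side: because $P_u-P_v=\int_0^x(u-v)\,\dy$, the source cannot be dominated pointwise by $\abs{u-v}$, only by $\int_0^x\abs{u-v}\,\dy$. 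The plan is therefore to integrate the inequality over the backward characteristic cone, whose slope $\sup u^2/2$ is finite by the $L^\infty$ bound and fixes $C(T)$; the flux boundary contribution is absorbed by the finite propagation speed, and by Fubini the nonlocal source is controlled by the spatial integral of $\abs{u-v}$ over the (shrinking) window. Setting up a Gronwall argument for $s\mapsto\norm{u(s,\cdot)-v(s,\cdot)}_{L^1(0,R+C(T)(t-s))}$ then produces the exponential factor and gives \eqref{eq:stability}, uniqueness being the special case of coinciding initial data.
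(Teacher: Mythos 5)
Your overall route coincides with the paper's (vanishing viscosity plus compensated compactness for existence; \Kruzkov doubling for uniqueness, which the paper simply delegates to \cite[Theorem 2.1]{CdK} and \cite[Theorem 2.2.1]{dR}), but the existence half of your proposal has a genuine gap exactly at its hardest point. You claim that testing the first equation against $\ue$ and ``exploiting the elliptic regularity'' of $-\eps\pxx\Pe+\px\Pe=\ue$ controls both $\norm{\ue(t,\cdot)}_{L^2(0,\infty)}$ and $\norm{\Pe(t,\cdot)}_{L^2(0,\infty)}$. It does not: the elliptic energy identity \eqref{eq:equ-L2-stima} gives $\eps^2\norm{\pxx\Pe}^2_{L^2}+\eps(\px\Pe(t,0))^2+\norm{\px\Pe}^2_{L^2}=\norm{\ue}^2_{L^2}$, i.e.\ control of $\px\Pe$ but never of $\Pe$ itself, and on the half-line there is no Poincar\'e inequality (a bounded, integrable $u$ with nonzero mean has a primitive tending to a nonzero constant, hence not in $L^2$). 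This is precisely why the paper assumes $P_0\in L^2$ in \eqref{eq:l-2-di-P0} and spends its longest lemma (Lemma \ref{lm:P-infty}) propagating the bound $\norm{\Pe}_{L^{\infty}(0,T;L^{2}(0,\infty))}\le C(T)$: one derives the evolution equation \eqref{eq:equat-per-P} for $\Pe$, multiplies by $\Pe-\eps\px\Pe$, introduces $\Fe(t,x)=\int_0^x\Pe(t,y)dy$ whose value at infinity is computed through boundary terms via \eqref{eq:lim-di-f}, and closes with Gronwall; combining the result with the interpolation bound \eqref{eq:258}, $\norm{\Pe}_{L^\infty}\le\sqrt{2\norm{\Pe}_{L^2}\norm{\px\Pe}_{L^2}}$, and the comparison-principle bound \eqref{eq:linfty-u}, $\norm{\ue(t,\cdot)}_{L^\infty}\le\norm{u_0}_{L^\infty}+\gamma\int_0^t\norm{\Pe(s,\cdot)}_{L^\infty}ds$, yields a \emph{quadratic} inequality $\norm{\Pe}^2_{L^\infty_tL^2_x}-C(T)\norm{\Pe}_{L^\infty_tL^2_x}-C(T)\le 0$ that bootstraps everything at once. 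You correctly identify the uniform $L^\infty$ bound on $\ue$ as ``the delicate estimate,'' but you only assert that the hypotheses ``must be used to close the bound''; since this bootstrap is the entire technical content of the existence proof, the proposal at this point restates the difficulty rather than resolving it.

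Two further inaccuracies. First, the viscous identity $\int_0^\infty\ue(t,x)dx=0$ you invoke for \eqref{eq:u-media-nulla} is false for the boundary-value problem: integrating the elliptic equation with $\Pe(t,0)=0$ gives $\int_0^\infty\ue(t,x)dx=\eps\px\Pe(t,0)$ (the paper's \eqref{eq:int-u}), which is only $O(\sqrt\eps)$ by \eqref{eq:10021}; the conclusion survives, but via this extra vanishing estimate \eqref{eq:px-0}, not via an exact identity. Second, in the doubling argument the boundary contribution at $x=0$ is not ``absorbed by the finite propagation speed'': the characteristic speed is $-u^2/2\le 0$, so characteristics impinge on $x=0$ and the sign of the trace term there must come from the boundary entropy condition \eqref{eq:SPEentropyboundary} together with the strong-trace theorem of \cite{CKK} — this is exactly what the cited uniqueness theorems of \cite{CdK, dR} handle. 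Apart from that, your uniqueness outline (Kato inequality with the nonlocal source dominated by $\gamma\int_0^x\abs{u-v}dy$, integration over backward cones, Gronwall) is the right shape and is essentially the content of the result the paper quotes.
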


A similar result has been proved in \cite{CdK, dR} in the context of locally bounded solutions.

Our existence argument is based on passing to the limit in a vanishing viscosity approximation of \eqref{eq:SPE}.

Fix a small number $0<\eps<1$, and let $\ue=\ue (t,x)$ be the unique classical solution of the following mixed problem \cite{CHK:ParEll}
\begin{equation}
\label{eq:SPEsw}
\begin{cases}
\displaystyle\pt \ue -\frac{1}{2}\ue^2\px\ue=\gamma\Pe+ \eps\pxx\ue,&\quad t>0,\ x>0,\\
-\eps\pxx\Pe+\px\Pe=\ue,&\quad t>0,\ x>0,\\
\ue(t,0)=0, &\quad t>0,\\
\Pe(t,0)=0,&\quad t>0,\\
\ue(0,x)=u_{\eps,0}(x),&\quad x>0,
\end{cases}
\end{equation}
where $u_{\eps,0}$ is a $C^\infty$ approximation of $u_{0}$ such that
\begin{equation}
\label{eq:u0eps}
\begin{split}
&\norm{u_{\eps,0}}_{L^2(0,\infty)}\le \norm{u_0}_{L^2(0,\infty)}, \quad \norm{u_{\eps,0}}_{L^{\infty}(0,\infty)}\le \norm{u_0}_{L^{\infty}(0,\infty)},\\
&\norm{P_{\eps,0}}_{L^2(0,\infty)}\le \norm{P_{0}}_{L^2(0,\infty)},\quad\eps\norm{\px P_{\eps,0}}_{L^2(0,\infty)}\le C_{0},
\end{split}
\end{equation}
and $C_0$ is a constant independent on $\eps$.

Let us prove some a priori estimates on $\ue$ and $\Pe$, denoting with $C_0$ the constants which depend on the initial datum, and $C(T)$ the constants which depend also on $T$.

Arguing as \cite{Cd}, we obtain the following results

\begin{lemma}
\label{lm:cns}
For each $t\in (0,\infty)$,
\begin{equation}
\label{eq:P-pxP-intfy}
\Pe(t,\infty)=\px\Pe(t,\infty)=0.
\end{equation}
Moreover,
\begin{equation}
\begin{split}
\label{eq:equ-L2-stima}
\eps^2\norm{\pxx\Pe(t,\cdot)}^2_{L^2(0,\infty)}&+\eps(\px\Pe(t,0))^2\\
&+ \norm{\px\Pe(t,\cdot)}^2_{L^2(0,\infty)}=\norm{\ue(t,\cdot)}^2_{L^2(0,\infty)}.
\end{split}
\end{equation}
\end{lemma}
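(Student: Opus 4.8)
The plan is to treat, for each fixed $t$, the second equation of \eqref{eq:SPEsw} as a linear second--order ODE in the $x$ variable with known right--hand side $\ue(t,\cdot)$, to read off the decay \eqref{eq:P-pxP-intfy} from its explicit solution, and then to obtain \eqref{eq:equ-L2-stima} by squaring the equation and integrating by parts once.

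First I would solve the elliptic relation. Writing $w=\px\Pe$, the equation $-\eps\pxx\Pe+\px\Pe=\ue$ becomes the first--order linear ODE $w'-\tfrac1\eps w=-\tfrac1\eps\ue$. Multiplying by the integrating factor $e^{-x/\eps}$ and integrating from $x$ to $+\infty$, the only branch for which $w$ stays bounded (the mode $e^{x/\eps}$ being discarded) is
\[
\px\Pe(t,x)=\frac1\eps\int_x^\infty e^{(x-y)/\eps}\,\ue(t,y)\,dy .
\]
Since $\ue(t,\cdot)\in L^2(0,\infty)$, Cauchy--Schwarz gives $\abs{\px\Pe(t,x)}\le (2\eps)^{-1/2}\norm{\ue(t,\cdot)}_{L^2(x,\infty)}$, which tends to $0$ as $x\to\infty$; this is $\px\Pe(t,\infty)=0$. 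The same kernel has $L^1$ mass $1$, so Young's inequality also yields $\norm{\px\Pe(t,\cdot)}_{L^2(0,\infty)}\le\norm{\ue(t,\cdot)}_{L^2(0,\infty)}$, a bound consistent with the identity to be proved.

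For $\Pe(t,\infty)=0$ I would integrate the representation above, using $\Pe(t,0)=0$, and pass to the limit $x\to\infty$; here one uses that $\ue(t,\cdot)$ is integrable with vanishing mean, so that the contribution from the far field disappears (this is the point argued as in \cite{Cd}). Granting the two decay statements \eqref{eq:P-pxP-intfy}, the identity \eqref{eq:equ-L2-stima} is immediate: squaring $-\eps\pxx\Pe+\px\Pe=\ue$ and integrating over $(0,\infty)$ gives
\[
\norm{\ue(t,\cdot)}_{L^2(0,\infty)}^2=\eps^2\norm{\pxx\Pe(t,\cdot)}_{L^2(0,\infty)}^2-2\eps\int_0^\infty\pxx\Pe\,\px\Pe\,dx+\norm{\px\Pe(t,\cdot)}_{L^2(0,\infty)}^2,
\]
and the cross term collapses to a boundary contribution, $-2\eps\int_0^\infty\pxx\Pe\,\px\Pe\,dx=-\eps\big[(\px\Pe)^2\big]_0^\infty=\eps(\px\Pe(t,0))^2$, because $\px\Pe(t,\infty)=0$. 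This is exactly \eqref{eq:equ-L2-stima}.

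The routine parts are the integrating--factor solution of the $P$ equation and the single integration by parts. The main obstacle is the justification of the decay at infinity, and specifically of $\Pe(t,\infty)=0$: unlike $\px\Pe(t,\infty)=0$, which follows at once from the representation formula, the vanishing of $\Pe$ itself requires controlling the far field of the source, i.e. the integrability and the zero mean of $\ue(t,\cdot)$. Establishing that these properties are propagated by the regularized dynamics \eqref{eq:SPEsw}, so that the boundary terms at $x=\infty$ in the integrations by parts genuinely drop, is the delicate step; it is this input, available from the a priori estimates obtained as in \cite{Cd}, that underpins both \eqref{eq:P-pxP-intfy} and the clean identity \eqref{eq:equ-L2-stima}.
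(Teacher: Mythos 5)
Your derivation of \eqref{eq:equ-L2-stima} from \eqref{eq:P-pxP-intfy} is correct and is the natural one (the paper itself writes no proof of this lemma, deferring to \cite{Cd}): squaring the elliptic equation and integrating by parts, the cross term collapses to exactly the boundary contribution $\eps(\px\Pe(t,0))^2$, and only $\px\Pe(t,\infty)=0$ is needed for this. The representation $\px\Pe(t,x)=\frac{1}{\eps}\int_x^\infty e^{(x-y)/\eps}\ue(t,y)\,dy$ and the resulting tail estimate are also fine, granted that the solution of \eqref{eq:SPEsw} selects the bounded branch, which is part of the well-posedness framework of \cite{CHK:ParEll}.

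The gap is in your justification of $\Pe(t,\infty)=0$, and the mechanism you name is false, not merely unverified. You invoke ``the integrability and the zero mean of $\ue(t,\cdot)$'', but $\ue(t,\cdot)$ does \emph{not} have zero mean for $t>0$: the paper's own Lemma \ref{lm:2} gives $\int_0^\infty\ue(t,x)\,dx=\eps\px\Pe(t,0)$, and the zero-mean hypothesis \eqref{eq:assinit} concerns $u_0$ only, the property being recovered just in the vanishing-viscosity limit \eqref{eq:u-media-nulla}. Moreover, even if $\ue(t,\cdot)$ had zero mean your argument would not close: integrating your representation from $0$ (where $\Pe(t,0)=0$) and letting $x\to\infty$ gives
\begin{equation*}
\Pe(t,\infty)=\int_0^\infty \ue(t,z)\left(1-e^{-z/\eps}\right)dz=\int_0^\infty\ue(t,z)\,dz-\eps\px\Pe(t,0),
\end{equation*}
so zero mean would yield $\Pe(t,\infty)=-\eps\px\Pe(t,0)$, which need not vanish. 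In fact, given $\px\Pe(t,\infty)=0$, the statement $\Pe(t,\infty)=0$ is \emph{equivalent} to the identity \eqref{eq:int-u} of Lemma \ref{lm:2}, and that identity is itself normally obtained by integrating the elliptic equation over $(0,\infty)$ and using \eqref{eq:P-pxP-intfy}; so appealing to it here would be circular. What actually closes the step --- and is the content of the reference to \cite{Cd} and \cite{CHK:ParEll} --- is the a priori information that $\Pe(t,\cdot)\in L^2(0,\infty)$ for each $t$ (equivalently, that the dynamics propagates from the initial data the compatibility constraint $\int_0^\infty\ue(t,z)(1-e^{-z/\eps})\,dz=0$, which your ``bounded branch plus $\Pe(t,0)=0$'' selection cannot produce by itself, since it exhausts all the freedom in the ODE): an $L^2$ function which, by your representation and the integrability of $\ue(t,\cdot)$, possesses a limit at $+\infty$ must have limit zero. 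Once $\Pe(t,\cdot)\in L^2(0,\infty)$ is granted, the rest of your argument goes through verbatim.
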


\begin{lemma}
\label{lm:2}
For each $t\in(0,\infty)$,
\begin{align}
\label{eq:int-u}
\int_{0}^{\infty}\ue(t,x) dx &= \eps\px\Pe(t,0),\\
\label{eq:L-infty-Px}
\sqrt{\eps}\norm{\px\Pe(t, \cdot)}_{L^{\infty}(0,\infty)}&\le \norm{u(t,\cdot)}_{L^2(0,\infty)},\\
\label{eq:uP}
\int_{0}^{\infty}\ue(t,x)\Pe(t,x) dx&\le \norm{u(t,\cdot)}^2_{L^2(0,\infty)}.
\end{align}
\end{lemma}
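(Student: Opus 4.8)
The plan is to read off all three bounds from the elliptic relation $-\eps\pxx\Pe+\px\Pe=\ue$ in \eqref{eq:SPEsw}, combined with the boundary data and the decay and energy information already secured in Lemma \ref{lm:cns}. For \eqref{eq:int-u}, I would integrate that elliptic equation over $x\in(0,\infty)$: the term $\int_0^\infty\px\Pe\,dx$ telescopes to $\Pe(t,\infty)-\Pe(t,0)$, which is zero by \eqref{eq:P-pxP-intfy} and the boundary condition $\Pe(t,0)=0$, whereas $-\eps\int_0^\infty\pxx\Pe\,dx=-\eps(\px\Pe(t,\infty)-\px\Pe(t,0))=\eps\px\Pe(t,0)$, again by \eqref{eq:P-pxP-intfy}. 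This is precisely \eqref{eq:int-u}.

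For \eqref{eq:L-infty-Px}, since $\px\Pe(t,\infty)=0$ I would write, for each fixed $x>0$,
$$(\px\Pe(t,x))^2=-\int_x^\infty\py(\px\Pe(t,y))^2\,dy=-2\int_x^\infty\px\Pe\,\pxx\Pe\,dy\le 2\int_0^\infty\abs{\px\Pe}\,\abs{\pxx\Pe}\,dy,$$
and then split the last integrand with the weighted Young inequality $2\abs{\px\Pe}\abs{\pxx\Pe}\le\eps^{-1}(\px\Pe)^2+\eps(\pxx\Pe)^2$. This bounds $(\px\Pe(t,x))^2$ by $\eps^{-1}\norm{\px\Pe(t,\cdot)}_{L^2(0,\infty)}^2+\eps\norm{\pxx\Pe(t,\cdot)}_{L^2(0,\infty)}^2$; multiplying by $\eps$ turns the right-hand side into $\norm{\px\Pe}_{L^2(0,\infty)}^2+\eps^2\norm{\pxx\Pe}_{L^2(0,\infty)}^2$, which by \eqref{eq:equ-L2-stima} (discarding the nonnegative boundary term) is at most $\norm{\ue(t,\cdot)}_{L^2(0,\infty)}^2$. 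Taking the supremum over $x$ yields \eqref{eq:L-infty-Px}.

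For \eqref{eq:uP}, I would substitute $\ue=-\eps\pxx\Pe+\px\Pe$ in the integrand. The contribution $\int_0^\infty\px\Pe\cdot\Pe\,dx=\tfrac12[\Pe^2]_0^\infty$ vanishes because $\Pe(t,0)=0$ and $\Pe(t,\infty)=0$, while $-\eps\int_0^\infty\pxx\Pe\cdot\Pe\,dx$ integrates by parts to $\eps\norm{\px\Pe(t,\cdot)}_{L^2(0,\infty)}^2$, the boundary term being killed once more by $\Pe(t,0)=0$ and $\px\Pe(t,\infty)=\Pe(t,\infty)=0$. Since $0<\eps<1$ and $\norm{\px\Pe}_{L^2(0,\infty)}^2\le\norm{\ue}_{L^2(0,\infty)}^2$ by \eqref{eq:equ-L2-stima}, this gives \eqref{eq:uP}.

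The only subtle point is the sharp constant in \eqref{eq:L-infty-Px}: a naive Cauchy--Schwarz would leave a spurious factor $\sqrt2$, so one must split the product $2\abs{\px\Pe}\abs{\pxx\Pe}$ with exactly the weight $\eps$, so that after multiplying by $\eps$ the two resulting terms line up precisely with the $\norm{\px\Pe}^2$ and $\eps^2\norm{\pxx\Pe}^2$ contributions in the energy identity \eqref{eq:equ-L2-stima}. Everything else reduces to integration by parts once Lemma \ref{lm:cns} is in hand.
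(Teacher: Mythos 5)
Your proof is correct and is exactly the argument the paper intends: the paper itself omits the proof of this lemma, deferring to the companion reference \cite{Cd}, and the standard route there is precisely yours --- integrate the elliptic equation $-\eps\pxx\Pe+\px\Pe=\ue$ over $(0,\infty)$ using \eqref{eq:P-pxP-intfy} and $\Pe(t,0)=0$ for \eqref{eq:int-u}, and pair the equation with $\Pe$ (respectively, use the fundamental-theorem identity for $(\px\Pe)^2$) together with the energy identity \eqref{eq:equ-L2-stima} for \eqref{eq:uP} and \eqref{eq:L-infty-Px}. Your $\eps$-weighted Young splitting is indeed the right way to land the constant $1$ in \eqref{eq:L-infty-Px}, and your observation that $\int_0^\infty\ue\Pe\,dx=\eps\norm{\px\Pe(t,\cdot)}_{L^2(0,\infty)}^2\le\norm{\ue(t,\cdot)}_{L^2(0,\infty)}^2$ (using $0<\eps<1$ and discarding the nonnegative boundary term in \eqref{eq:equ-L2-stima}) is sound; note only that the $u$ on the right-hand sides of \eqref{eq:L-infty-Px} and \eqref{eq:uP} is a typo in the paper for $\ue$, as you implicitly assumed.
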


\begin{lemma}\label{lm:stima-l-2}
For each $t\in(0,\infty)$, the inequality holds
\begin{equation}
\label{eq:stima-l-2}
\norm{\ue(t,\cdot)}^2_{L^2(0,\infty)}+ 2\eps e^{2\gamma t}\int_{0}^{\infty}e^{-2\gamma s}\norm{\px\ue(s,\cdot)}^2_{L^2(0,\infty)}ds\le e^{2\gamma t}\norm{u_{0}}^2_{L^2(0,\infty)}.
\end{equation}
In particular, we have
\begin{equation}
\label{eq:10021}
\begin{split}
\eps\norm{\pxx\Pe(t,\cdot)}_{L^2(0,\infty)}, \norm{\px\Pe(t,\cdot)}_{L^2(0,\infty)}&\le e^{\gamma t}\norm{u_{0}}_{L^2(0,\infty)},\\
\sqrt{\eps}\vert\px\Pe(t,0)\vert, \sqrt{\eps}\norm{\px\Pe(t, \cdot)}_{L^{\infty}(0,\infty)}&\le e^{\gamma t}\norm{u_{0}}_{L^2(0,\infty)}.
\end{split}
\end{equation}
Moreover, we get
\begin{equation}
\label{eq:258}
\norm{\Pe(t,\cdot)}_{L^{\infty}(0,\infty)}\le \sqrt{2e^{\gamma t}\norm{u_{0}}_{L^2(0,\infty)}\norm{\Pe(t,\cdot)}_{L^2(0,\infty)}}.
\end{equation}
\end{lemma}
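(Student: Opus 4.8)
The plan is to derive \eqref{eq:stima-l-2} by the standard $L^2$ energy method applied to the viscous problem \eqref{eq:SPEsw}, and then to read off the remaining bounds from the identity \eqref{eq:equ-L2-stima} of Lemma \ref{lm:cns} together with Lemma \ref{lm:2}. First I would multiply the first equation of \eqref{eq:SPEsw} by $\ue$ and integrate over $(0,\infty)$. The time term yields $\frac{1}{2}\frac{d}{dt}\norm{\ue(t,\cdot)}^2_{L^2(0,\infty)}$. The convective term is exact: since $\ue^3\px\ue=\frac{1}{4}\px(\ue^4)$, integration gives the pure boundary contribution $-\frac{1}{8}[\ue^4]_{x=0}^{x=\infty}$, which vanishes because $\ue(t,0)=0$ and the $L^2$-framework forces decay at $x=\infty$. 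The viscous term, after one integration by parts, becomes $-\eps\norm{\px\ue(t,\cdot)}^2_{L^2(0,\infty)}$, the boundary terms again vanishing thanks to $\ue(t,0)=0$ and the decay at infinity. Finally the source term is controlled directly by \eqref{eq:uP}, giving $\gamma\int_0^\infty\ue\Pe\,dx\le\gamma\norm{\ue(t,\cdot)}^2_{L^2(0,\infty)}$.

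Collecting these contributions produces the differential inequality
\[
\frac{d}{dt}\norm{\ue(t,\cdot)}^2_{L^2(0,\infty)}+2\eps\norm{\px\ue(t,\cdot)}^2_{L^2(0,\infty)}\le 2\gamma\norm{\ue(t,\cdot)}^2_{L^2(0,\infty)}.
\]
Multiplying by the integrating factor $e^{-2\gamma t}$ rewrites the left-hand side as $\frac{d}{dt}\big(e^{-2\gamma t}\norm{\ue(t,\cdot)}^2_{L^2(0,\infty)}\big)+2\eps e^{-2\gamma t}\norm{\px\ue(t,\cdot)}^2_{L^2(0,\infty)}$, and integrating in time from $0$ to $t$ while invoking the initial bound $\norm{u_{\eps,0}}_{L^2(0,\infty)}\le\norm{u_0}_{L^2(0,\infty)}$ from \eqref{eq:u0eps} yields \eqref{eq:stima-l-2} after multiplying back by $e^{2\gamma t}$.

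The estimates \eqref{eq:10021} then follow at once: substituting the bound $\norm{\ue(t,\cdot)}^2_{L^2(0,\infty)}\le e^{2\gamma t}\norm{u_0}^2_{L^2(0,\infty)}$ into the energy identity \eqref{eq:equ-L2-stima} and discarding the nonnegative terms on its left-hand side bounds each of $\eps^2\norm{\pxx\Pe(t,\cdot)}^2_{L^2(0,\infty)}$, $\norm{\px\Pe(t,\cdot)}^2_{L^2(0,\infty)}$, and $\eps(\px\Pe(t,0))^2$ by $e^{2\gamma t}\norm{u_0}^2_{L^2(0,\infty)}$, and taking square roots gives the first three bounds; the $L^\infty$ bound on $\px\Pe$ comes from \eqref{eq:L-infty-Px} combined with the $L^2$ estimate just obtained. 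For \eqref{eq:258} I would use that $\Pe(t,0)=0$ together with \eqref{eq:P-pxP-intfy} to write $\Pe(t,x)^2=2\int_0^x\Pe\px\Pe\,dy$, bound the right-hand side by $2\norm{\Pe(t,\cdot)}_{L^2(0,\infty)}\norm{\px\Pe(t,\cdot)}_{L^2(0,\infty)}$ via Cauchy--Schwarz, and insert the $\px\Pe$ bound from \eqref{eq:10021}. The only delicate point in the whole argument is the justification that the boundary contributions at $x=\infty$ in the convective and viscous integrations by parts genuinely vanish; this rests on the decay of $\ue$ and its derivatives guaranteed by the $L^2$-framework and by \eqref{eq:P-pxP-intfy}, and is where the regularity and integrability of the classical viscous solutions must be invoked.
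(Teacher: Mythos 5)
Your proposal is correct and follows essentially the same route as the paper: the $L^2$ energy estimate obtained by multiplying the first equation of \eqref{eq:SPEsw} by $\ue$ (with the cubic term integrating to a vanishing boundary contribution and the source handled via \eqref{eq:uP}), Gronwall with \eqref{eq:u0eps} for \eqref{eq:stima-l-2}, then \eqref{eq:10021} read off from \eqref{eq:equ-L2-stima} and \eqref{eq:L-infty-Px}, and \eqref{eq:258} from $\Pe^2(t,x)\le 2\int_0^\infty\vert\Pe\vert\vert\px\Pe\vert\,dx$ plus Cauchy--Schwarz. Your explicit treatment of the boundary terms at $x=0$ and $x=\infty$, which the paper leaves implicit, is a welcome addition rather than a deviation.
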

\begin{proof}
Due to \eqref{eq:SPEsw} and \eqref{eq:uP},
\begin{align*}
\frac{d}{dt}\int_{0}^{\infty}\ue^2 dx =& 2 \int_{0}^{\infty}\ue\pt\ue dx\\
=& 2\eps \int_{0}^{\infty} \ue\pxx\ue dx + \int_{0}^{\infty}\ue^3 \px\ue dx + 2\gamma \int_{0}^{\infty} \ue\Pe dx\\
\le & - 2\eps\int_{0}^{\infty}(\px\ue)^2 dx  + 2\gamma \norm{\ue(t,\cdot)}^2_{L^2(0,\infty)}.
\end{align*}
The Gronwall Lemma and \eqref{eq:u0eps} give \eqref{eq:stima-l-2}.

\eqref{eq:10021} follows from \eqref{eq:equ-L2-stima}, \eqref{eq:L-infty-Px} and  \eqref{eq:stima-l-2}.

Finally, we prove \eqref{eq:258}.
Due to \eqref{eq:SPEsw} and H\"older inequality,
\begin{align*}
\Pe^2(t,x)\le 2\int_{0}^{\infty}\vert\Pe(t,x)\vert\vert\px\Pe(t,x)\vert dx\le  2\norm{\Pe(t,\cdot)}_{L^2(0,\infty)}\norm{\px\Pe(t,\cdot)}_{L^2(0,\infty)},
\end{align*}
that is
\begin{equation}
\label{eq:2592}
\vert\Pe(t,x)\vert\le \sqrt{2\norm{\Pe(t,\cdot)}_{L^2(0,\infty)}\norm{\px\Pe(t,\cdot)}_{L^2(0,\infty)}}.
\end{equation}
\eqref{eq:10021} and \eqref{eq:2592} give \eqref{eq:258}.
\end{proof}

\begin{lemma}
\label{lm:linfty-u}
For every $t\in(0,\infty)$,
\begin{equation}
\label{eq:linfty-u}
\norm{\ue(t,\cdot)}_{L^\infty(0,\infty)}\le\norm{u_0}_{L^\infty(0,\infty)}+\gamma\int_{0}^{t}\norm{\Pe(s,\cdot)}_{L^{\infty}(0,\infty)}ds.
\end{equation}
\end{lemma}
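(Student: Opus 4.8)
The plan is to derive the $L^\infty$ bound on $\ue$ directly from the first equation in \eqref{eq:SPEsw}, treating the nonlocal source term $\gamma\Pe$ as a forcing that we already control through Lemma \ref{lm:stima-l-2}. First I would rewrite the evolution equation in quasilinear form,
\begin{equation*}
\pt\ue-\frac{1}{2}\ue^2\px\ue-\eps\pxx\ue=\gamma\Pe,
\end{equation*}
and view this as a parabolic equation for $\ue$ with a first-order transport term whose coefficient is $-\tfrac12\ue^2$ and a known right-hand side $\gamma\Pe$. The natural tool is the maximum principle for parabolic operators: the operator $\pt-\tfrac12\ue^2\px-\eps\pxx$ is uniformly parabolic for each fixed $\eps$, so the extremum of $\ue$ over the parabolic boundary, augmented by the time-integrated contribution of the forcing term, controls $\ue$ in the interior.

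The key steps are as follows. First, fix $t>0$ and consider the comparison function $M(t)=\norm{u_0}_{L^\infty(0,\infty)}+\gamma\int_0^t\norm{\Pe(s,\cdot)}_{L^\infty(0,\infty)}\ds$, together with its negative $-M(t)$. One checks that $M$ is a supersolution and $-M$ a subsolution of the parabolic problem: indeed $\tfrac{d}{dt}M(t)=\gamma\norm{\Pe(t,\cdot)}_{L^\infty(0,\infty)}\ge\gamma\Pe(t,x)$ pointwise, while the transport and diffusion terms annihilate the constant-in-$x$ function $M(t)$, so the operator applied to $M$ dominates $\gamma\Pe$. Second, at the spatial boundary $x=0$ we have $\ue(t,0)=0\le M(t)$ by the boundary condition in \eqref{eq:SPEsw}, and at the initial time $\ue(0,x)=u_{\eps,0}(x)$, whose sup-norm is bounded by $\norm{u_0}_{L^\infty(0,\infty)}\le M(0)$ thanks to \eqref{eq:u0eps}. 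Third, the decay estimate \eqref{eq:P-pxP-intfy} together with the integrability coming from Lemma \ref{lm:stima-l-2} ensures that $\ue(t,\cdot)$ vanishes at $x=\infty$, so the comparison is not spoiled at spatial infinity. Applying the parabolic comparison principle then yields $-M(t)\le\ue(t,x)\le M(t)$ for all $x>0$, which is exactly \eqref{eq:linfty-u}.

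The main obstacle I anticipate is \emph{justifying the maximum principle on the unbounded domain} $(0,\infty)$ rather than on a bounded interval, since the classical comparison principle is stated on compact sets. The clean way around this is to exploit the decay at infinity: by \eqref{eq:P-pxP-intfy} and the $L^2$ control in \eqref{eq:stima-l-2}, both $\ue$ and $\Pe$ tend to zero as $x\to\infty$, so any interior maximum of $\ue-M(t)$ is attained at a finite point and the usual argument — at an interior maximum one has $\px\ue=0$ and $\pxx\ue\le 0$, forcing $\pt(\ue-M)\le\gamma\Pe-\gamma\norm{\Pe}_{L^\infty}\le 0$ — applies verbatim. Alternatively one can run the comparison on $(0,R)$ and let $R\to\infty$, using the decay to dispose of the artificial boundary term at $x=R$. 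Either route requires a little care to confirm that the regularity of the classical solution $\ue$ from \cite{CHK:ParEll} is enough to locate the extremum, but the estimate \eqref{eq:linfty-u} itself then follows immediately.
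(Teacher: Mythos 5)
Your proposal is correct and follows essentially the same route as the paper: the paper bounds the right-hand side by $\gamma\norm{\Pe(t,\cdot)}_{L^{\infty}(0,\infty)}$, introduces exactly your comparison function ${\mathcal F}(t)=\norm{u_0}_{L^\infty(0,\infty)}+\gamma\int_{0}^{t}\norm{\Pe(s,\cdot)}_{L^{\infty}(0,\infty)}\,ds$, and applies the parabolic comparison principle to get $-{\mathcal F}(t)\le\ue(t,x)\le{\mathcal F}(t)$. Your extra care about the unbounded domain and the boundary at $x=0$ only makes explicit what the paper leaves implicit.
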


\begin{proof}
Due to \eqref{eq:SPEsw},
\begin{equation*}
\pt \ue -\frac{1}{2}\ue^2\px\ue-\eps\pxx \ue\le \gamma \vert\Pe(t,x)\vert \le \gamma \norm{\Pe(t,\cdot)}_{L^{\infty}(0,\infty)}.
\end{equation*}
Since the map
\begin{equation*}
{\mathcal F}(t):=\norm{u_0}_{L^\infty(0,\infty)}+\gamma\int_{0}^{t} \norm{\Pe(s,\cdot)}_{L^{\infty}(0,\infty)} ds,\qquad t\in(0,\infty),
\end{equation*}
solves the equation
\begin{equation*}
\frac{d{\mathcal F}}{dt}=\gamma \norm{\Pe(t,\cdot)}_{L^{\infty}(0,\infty)}
\end{equation*}
and
\begin{equation*}
\max\{\ue(0,x),0\}\le {\mathcal F}(t),\qquad (t,x)\in (0,\infty)^2,
\end{equation*}
the comparison principle for parabolic equations implies that
\begin{equation*}
 \ue(t,x)\le {\mathcal F}(t),\qquad (t,x)\in (0,\infty)^2.
\end{equation*}

In a similar way we can prove that
\begin{equation*}
\ue(t,x)\ge -{\mathcal F}(t),\qquad (t,x)\in (0,\infty)^2.
\end{equation*}
Therefore,
\begin{equation}
\label{eq:ass-val-u}
\vert\ue(t,x)\vert\le\norm{u_0}_{L^\infty(0,\infty)}+\gamma\int_{0}^{t} \norm{\Pe(s,\cdot)}_{L^{\infty}(0,\infty)} ds,
\end{equation}
which gives \eqref{eq:linfty-u}.
\end{proof}

\begin{lemma}\label{lm:def-di-f}
Consider the following function
\begin{equation}
\label{eq:def-di-f}
\Fe(t,x)=\int_{0}^{x} \Pe(t,y)dy, \quad t,\,x>0.
\end{equation}
We have that
\begin{equation}
\label{eq:lim-di-f}
\lim_{x\to\infty}\Fe(t,x)=\int_{0}^{\infty}\Pe(t,y)dy=\frac{\eps}{\gamma}\ptx\Pe(t,0)+\frac{\eps}{\gamma}\px\ue(t,0).
\end{equation}
\end{lemma}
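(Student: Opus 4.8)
The plan is to integrate the first equation of \eqref{eq:SPEsw} in space over $(0,x)$ and then send $x\to\infty$, feeding in the identity \eqref{eq:int-u} for $\int_0^\infty\ue\,dy$ and the decay at infinity recorded in Lemma \ref{lm:cns}. The point is that the elliptic equation alone only controls $\int_0^\infty\ue$ (not $\int_0^\infty\Pe$), so the evolution equation must be used to reach the source term $\Pe$.

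First I would rewrite $\frac12\ue^2\px\ue=\frac16\px(\ue^3)$ and integrate $\pt\ue-\frac16\px(\ue^3)=\gamma\Pe+\eps\pxx\ue$ in $y$ over $(0,x)$. The two total-derivative terms collapse to boundary values via the fundamental theorem of calculus, and the boundary condition $\ue(t,0)=0$ kills the term at $y=0$; recognizing $\int_0^x\Pe(t,y)\dy=\Fe(t,x)$ from \eqref{eq:def-di-f} yields
\begin{equation*}
\pt\int_0^x\ue(t,y)\dy-\frac16\ue^3(t,x)=\gamma\Fe(t,x)+\eps\px\ue(t,x)-\eps\px\ue(t,0).
\end{equation*}

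Next I would pass to the limit $x\to\infty$. On the left, using \eqref{eq:int-u} one has $\int_0^\infty\ue(t,y)\dy=\eps\px\Pe(t,0)$, whose $t$-derivative is $\eps\ptx\Pe(t,0)$, while $\ue(t,x)\to0$ forces $\ue^3(t,x)\to0$; on the right, $\px\ue(t,x)\to0$ makes the term $\eps\px\ue(t,x)$ vanish, and $\Fe(t,x)\to\int_0^\infty\Pe(t,y)\dy$ by definition. This leaves $\eps\ptx\Pe(t,0)=\gamma\lim_{x\to\infty}\Fe(t,x)-\eps\px\ue(t,0)$, which rearranges to \eqref{eq:lim-di-f}.

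The main obstacle is the rigorous justification of this limit passage rather than the formal computation. Specifically, I must (i) legitimize differentiating $\int_0^\infty\ue\,dy$ under the integral sign so that $\frac{d}{dt}$ commutes with the improper integral, and (ii) secure the pointwise decays $\ue(t,x)\to0$ and $\px\ue(t,x)\to0$ as $x\to\infty$, so that the boundary contributions at infinity vanish. I expect to obtain these from the uniform $L^2$ bounds on $\ue(t,\cdot)$ and $\px\ue(t,\cdot)$ in Lemma \ref{lm:stima-l-2} together with the smoothness of the classical solution of \eqref{eq:SPEsw}, much as Lemma \ref{lm:cns} provides the analogous decay for $\Pe$ and $\px\Pe$; the existence of $\lim_{x\to\infty}\Fe(t,x)$ then follows from the integrability of $\Pe(t,\cdot)$ implied by the same estimates.
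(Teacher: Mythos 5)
Your proposal is correct and follows essentially the same route as the paper's proof: integrate the first equation of \eqref{eq:SPEsw} over $(0,x)$, use \eqref{eq:int-u} (differentiated in $t$) to identify $\lim_{x\to\infty}\int_0^x\pt\ue\,dy=\eps\ptx\Pe(t,0)$, invoke the decay of $\ue^3$ and $\eps\px\ue$ at infinity from the regularity of the classical solution, and rearrange to obtain \eqref{eq:lim-di-f}. The extra justification points you flag (differentiation under the improper integral and pointwise decay) are exactly what the paper subsumes under ``the regularity of $\ue$'' together with \eqref{eq:int-u}.
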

\begin{proof}
Integrating on $(0,x)$ the first equation of \eqref{eq:SPEsw}, we get
\begin{equation}
\label{eq:int-in-x}
\int_{0}^{x}\pt\ue(t,y) dy -\frac{1}{6} \ue^3(t,x)  - \eps\px\ue(t,x) + \eps\px\ue(t,0)= \gamma\int_{0}^{x}\Pe(t,y) dy.
\end{equation}
It follows from the regularity of $\ue$ that
\begin{equation}
\label{eq:1212}
\lim_{x\to\infty}\left(-\frac{1}{6}\ue^3(t,x)-\eps\px\ue(t,x)\right)=0.
\end{equation}
For \eqref{eq:int-u}, we have that
\begin{equation}
\label{eq:12123}
\lim_{x\to\infty}\int_{0}^{x}\pt\ue(t,y)dy=\int_{0}^{\infty}\pt\ue(t,x) dx = \frac{d}{dt}\int_{0}^{\infty}\ue(t,x)dx=\eps\ptx\Pe(t,0).
\end{equation}
\eqref{eq:int-in-x}, \eqref{eq:1212} and \eqref{eq:12123} give \eqref{eq:lim-di-f}.
\end{proof}

\begin{lemma}\label{lm:P-infty}
Let $T>0$. There exists a function $C(T)>0$, independent on $\eps$, such that
\begin{equation}
\label{eq:l-2-P}
\norm{\Pe}_{L^{\infty}(0,T;L^{2}(0,\infty))}\le C(T).
\end{equation}
In particular, we have that
\begin{align}
\label{eq:L-2-P}
\norm{\Pe(t,\cdot)}_{L^2(0,\infty)}&\le C(T),\\
\label{eq:L-2-px-P-1}
\eps\norm{\px\Pe(t,\cdot)}_{L^2(0,\infty)}&\le C(T),\\
\label{eq:l-2-px-u}
\eps^2 e^{2\gamma t}\int_{0}^{t} e^{-2\gamma s}\left(\ptx\Pe(s,0)+\px\ue(s,0)\right)^2 ds &\le C(T),\\
\label{eq:P-infty-4}
\norm{\Pe}_{L^{\infty}((0,T)\times(0,\infty))}&\le C(T),\\
\label{eq:u-infty-5}
\norm{\ue}_{L^{\infty}((0,T)\times(0,\infty))}&\le C(T).
\end{align}
Moreover, we get
\begin{equation}
\label{eq:pt-px-P}
\eps\left\vert\int_{0}^{t}\!\!\!\int_{0}^{\infty}\Pe\ptx\Pe ds dx\right\vert\le C(T), \quad (t,x)\in (0,T)\times (0,\infty).
\end{equation}
\end{lemma}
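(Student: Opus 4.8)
The plan is to reduce the whole statement to the single uniform bound $\norm{\Pe(t,\cdot)}_{L^2(0,\infty)}\le C(T)$ and then to obtain the latter from an energy estimate for $A(t):=\norm{\Pe(t,\cdot)}^2_{L^2(0,\infty)}$. First I would observe that most of the assertions are consequences of \eqref{eq:L-2-P}: the bound \eqref{eq:L-2-px-P-1} is immediate from $\norm{\px\Pe(t,\cdot)}_{L^2(0,\infty)}\le e^{\gamma t}\norm{u_0}_{L^2(0,\infty)}$ in \eqref{eq:10021}; \eqref{eq:P-infty-4} follows by inserting \eqref{eq:L-2-P} into \eqref{eq:258}; and \eqref{eq:u-infty-5} then follows from Lemma~\ref{lm:linfty-u}, since $\norm{\ue(t,\cdot)}_{L^\infty(0,\infty)}\le\norm{u_0}_{L^\infty(0,\infty)}+\gamma\int_0^t\norm{\Pe(s,\cdot)}_{L^\infty(0,\infty)}\ds\le C(T)$. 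So the heart of the matter is \eqref{eq:l-2-P}/\eqref{eq:L-2-P}.

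To estimate $A(t)$ I would differentiate in time and use both equations of \eqref{eq:SPEsw}: rewriting the first as $\pt\ue=\frac{1}{6}\px(\ue^3)+\gamma\Pe+\eps\pxx\ue$ and differentiating the elliptic relation gives $-\eps\pxx\pt\Pe+\px\pt\Pe=\pt\ue$. Multiplying by $\Pe$ and integrating by parts on $(0,\infty)$, using the decay/boundary data $\Pe(t,0)=\Pe(t,\infty)=\px\Pe(t,\infty)=0$ from Lemma~\ref{lm:cns} together with the identity $\int_0^\infty\Pe(t,y)\dy=\frac{\eps}{\gamma}(\ptx\Pe(t,0)+\px\ue(t,0))$ from Lemma~\ref{lm:def-di-f}, the constant-in-$x$ boundary contributions assemble into the single nonpositive term $-\frac{\gamma}{2}\left(\int_0^\infty\Pe\dy\right)^2=-\frac{\eps^2}{2\gamma}(\ptx\Pe(t,0)+\px\ue(t,0))^2$, while the $\eps$-weighted parts become exact time derivatives. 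This yields the \emph{modified energy}
\[
\mathcal{E}(t)=\int_0^\infty\Pe^2\dx+2\eps\int_0^\infty\ue\Pe\dx-\eps^2\int_0^\infty(\px\Pe)^2\dx,
\]
which is comparable to $A(t)$ up to additive constants $C(T)$ (the two corrections being controlled by \eqref{eq:uP} and \eqref{eq:10021}), and which obeys
\[
\frac{d}{dt}\mathcal{E}(t)+\frac{\eps^2}{\gamma}\left(\ptx\Pe(t,0)+\px\ue(t,0)\right)^2\le C(T)\bigl(1+A(t)\bigr)+\frac{1}{3}\int_0^\infty\abs{\ue}^3\,\abs{\Pe-\eps\px\Pe}\dx.
\]

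It remains to control the cubic term and close the inequality. Here I would use $\Pe-\eps\px\Pe=-\int_x^\infty\ue\dy$, whose $L^2$ and $L^\infty$ norms are bounded by $\norm{\Pe}_{L^2}+\eps\norm{\px\Pe}_{L^2}$ and by \eqref{eq:258}, \eqref{eq:10021}; combined with the unconditional bound $\norm{\ue(t,\cdot)}_{L^2}\le e^{\gamma t}\norm{u_0}_{L^2}$ from Lemma~\ref{lm:stima-l-2} and with $\norm{\ue(t,\cdot)}_{L^\infty}\le C(T)\bigl(1+\int_0^t A(s)^{1/4}\ds\bigr)$ from Lemma~\ref{lm:linfty-u} and \eqref{eq:258}, the cubic term is dominated by $C(T)$ times a sublinear power of $\sup_{[0,t]}A$. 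A Gronwall argument (the super-linearity being tamed by the quarter power and the factors of $T$) then gives $A(t)\le C(T)$, uniformly in $\eps$, which is \eqref{eq:L-2-P}. Integrating the differential inequality in time, now that $A$ and the cubic term are bounded, gives $\eps^2\int_0^t(\ptx\Pe(s,0)+\px\ue(s,0))^2\ds\le C(T)$ and hence \eqref{eq:l-2-px-u} (the exponential weights $e^{2\gamma t}e^{-2\gamma s}\le e^{2\gamma T}$ being harmless). Finally, \eqref{eq:pt-px-P} follows by integrating in time the term $\eps\int_0^\infty\Pe\ptx\Pe\dx=\eps\int_0^\infty\Pe\px\pt\Pe\dx$: its $x$-integration by parts reduces it to the exact time derivatives entering $\mathcal{E}$ plus the already controlled quantities $\eps\int_0^t\!\int_0^\infty\ue^3\px\Pe$, $\eps\gamma\int_0^t A$ and $\eps^2\int_0^t\!\int_0^\infty(\px\Pe)^2$.

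The main obstacle is the interplay of three features: the elliptic operator $-\eps\pxx+\px$ is not self-adjoint and lives on the half-line, so the energy computation unavoidably produces the boundary quantities $\px\Pe(t,0)$, $\ptx\Pe(t,0)$, $\px\ue(t,0)$ with no a priori sign; and the flux is genuinely cubic, so $\int_0^\infty\ue^3(\Pe-\eps\px\Pe)\dx$ cannot be absorbed into the viscous dissipation $\eps\int_0^\infty(\px\ue)^2\dx$ and must instead be handled through the coupling with the $L^\infty$ bounds of Lemmas~\ref{lm:stima-l-2}--\ref{lm:linfty-u}. The delicate point is precisely to verify that the boundary terms collapse to the single good sign $-\frac{\gamma}{2}\bigl(\int_0^\infty\Pe\dy\bigr)^2$ (this is exactly what makes \eqref{eq:l-2-px-u} come out for free) and that the cubic coupling stays sub-linear enough for Gronwall to return a finite, $\eps$-independent constant $C(T)$ for every $T>0$.
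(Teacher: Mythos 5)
Your proposal is correct and follows essentially the same route as the paper's proof: derive an evolution equation for $\Pe$ by differentiating the integrated elliptic relation in time, let the boundary contributions collapse via Lemma \ref{lm:def-di-f} into the single good-sign square $-c\,\frac{\eps^2}{\gamma}\left(\ptx\Pe(t,0)+\px\ue(t,0)\right)^2$ (which is exactly how \eqref{eq:l-2-px-u} comes out for free in the paper as well), estimate the cubic coupling through \eqref{eq:258} and Lemma \ref{lm:stima-l-2}, and close with Gronwall plus a quadratic self-improvement inequality in $\norm{\Pe}_{L^{\infty}(0,T;L^{2}(0,\infty))}$, with \eqref{eq:L-2-px-P-1}--\eqref{eq:u-infty-5} and \eqref{eq:pt-px-P} then read off as you indicate. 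The only deviation is cosmetic: the paper uses the multiplier $\Pe-\eps\px\Pe$, which produces the coercive energy $G(t)=\norm{\Pe(t,\cdot)}^2_{L^2(0,\infty)}+\eps^2\norm{\px\Pe(t,\cdot)}^2_{L^2(0,\infty)}$ and coefficient $\gamma$ on the boundary square, whereas your multiplier $\Pe$ yields the non-coercive modified energy with the cross term $2\eps\int_0^\infty\ue\Pe\dx$ and the minus sign (and the factor $\gamma/2$), which is legitimate since, as you correctly note, it is comparable to $\norm{\Pe(t,\cdot)}^2_{L^2(0,\infty)}$ up to additive $C(T)$ by \eqref{eq:uP} and \eqref{eq:10021}.
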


\begin{proof}
Let $0<t<T$. We begin by observing that, integrating on $(0,x)$ the second equation of \eqref{eq:SPEsw}, we get
\begin{equation}
\label{eq:P-int-in-0}
\Pe(t,x)=\int_{0}^{x}\ue(t,y)dy + \eps \px\Pe(t,x)- \eps\px\Pe(t,0).
\end{equation}
Differentiating with respect to $t$, we have that
\begin{align*}
\pt\Pe(t,x)&=\frac{d}{dt}\int_{0}^{x}\ue(t,y)dy + \eps \ptx\Pe(t,x)- \eps\ptx\Pe(t,0)\\
&=\int_{0}^{x}\pt\ue(t,x) +\eps \ptx\Pe(t,x)- \eps\ptx\Pe(t,0).
\end{align*}
It follows from \eqref{eq:def-di-f} and \eqref{eq:int-in-x} that
\begin{equation}
\label{eq:equat-per-P}
\begin{split}
\pt\Pe(t,x)=&\gamma\Fe(t,x) +\frac{1}{6}\ue(t,x)^3+\eps\px\ue(t,x)\\
&-\eps\px\ue(t,0)+ \eps \ptx\Pe(t,x)- \eps\ptx\Pe(t,0).
\end{split}
\end{equation}
Multiplying \eqref{eq:equat-per-P} by $\Pe - \eps \px\Pe$, we have that
\begin{equation}
\label{eq:1234}
\begin{split}
(\Pe - \eps \px\Pe)\pt\Pe= &\gamma(\Pe - \eps \px\Pe) \Fe-\frac{1}{6}(\Pe - \eps \px\Pe)\ue^3\\
&-\eps(\Pe - \eps \px\Pe)\px\ue(t,0)+\eps(\Pe -\eps \px\Pe)\px\ue\\
&  +\eps(\Pe -\eps \px\Pe) \ptx\Pe- \eps(\Pe -\eps \px\Pe)\ptx\Pe(t,0).
\end{split}
\end{equation}
Integrating \eqref{eq:1234} on $(0,x)$, for \eqref{eq:SPEsw}, we get
\begin{equation}
\label{eq:1235}
\begin{split}
&\int_{0}^{x}\Pe\pt\Pe dy -\eps \int_{0}^{x}\px\Pe\pt\Pe dy\\
&\quad=\gamma \int_{0}^{x}\Pe\Fe dy - \eps \int_{0}^{x}\Fe\px\Pe dy +\frac{1}{6}\int_{0}^{x}\Pe \ue^3 dy\\
&\qquad -\frac{\eps }{6}\int_{0}^{x}\px\Pe\ue^3dy -\eps \px\ue(t,0)\int_{0}^{y} \Pe dx+\eps^2 \px\ue(t,0)\Pe\\
&\qquad +\eps \int_{0}^{x} \Pe\px\ue dy -\eps^2 \int_{0}^{x}\px\Pe\px\ue dy + \eps \int_{0}^{x}\Pe\ptx\Pe dy\\
&\qquad -\eps^2 \int_{0}^{x}\px\Pe\ptx\Pe dy -\eps \ptx\Pe(t,0)\int_{0}^{x}\Pe dy +\eps^2 \ptx\Pe(t,0)\Pe.
\end{split}
\end{equation}
We observe that, for \eqref{eq:SPEsw},
\begin{equation}
\label{eq:int-by-part}
-\eps \int_{0}^{x}\px\Pe\pt\Pe dy=-\eps \Pe\pt\Pe + \eps \int_{0}^{x}\Pe\ptx\Pe dy.
\end{equation}
Therefore, \eqref{eq:1235} and \eqref{eq:int-by-part} give
\begin{equation}
\begin{split}
\label{eq:P-P-x}
&\int_{0}^{x}\Pe\pt\Pe dy+\eps^2 \int_{0}^{x}\px\Pe\ptx\Pe dy\\
&\quad=\eps \Pe\pt\Pe + \gamma \int_{0}^{x}\Pe\Fe dy- \eps\int_{0}^{x}\Fe\px\Pe dy\\
&\qquad +\frac{1}{6}\int_{0}^{x}\Pe \ue^3 dy -\frac{\eps }{6}\int_{0}^{x}\px\Pe \ue^3dy  -\eps\px\ue(t,0)\int_{0}^{y} \Pe dx\\
&\qquad +\eps^2 \px\ue(t,0)\Pe +\eps \int_{0}^{x} \Pe\px\ue dy -\eps^2 \int_{0}^{x}\px\Pe\px\ue dy\\
&\qquad -\eps \ptx\Pe(t,0)\int_{0}^{x}\Pe dy.
\end{split}
\end{equation}
Since
\begin{align*}
\int_{0}^{\infty}\Pe\pt\Pe dx &=\frac{1}{2}\frac{d}{dt}\int_{0}^{\infty}\Pe^2dx,\\
\eps^2 \int_{0}^{\infty}\ptx\Pe\px\Pe dx &= \frac{\eps^2 }{2}\frac{d}{dt}\int_{0}^{\infty}(\px\Pe)^2dx,
\end{align*}
when $x\to\infty$, for \eqref{eq:P-pxP-intfy} and \eqref{eq:P-P-x}, we have that
\begin{equation}
\label{eq:12312}
\begin{split}
&\frac{1}{2}\frac{d}{dt}\int_{0}^{\infty}\Pe^2dx+\frac{\eps^2 }{2}\frac{d}{dt}\int_{0}^{\infty}(\px\Pe)^2dx\\
&\quad= \gamma \int_{0}^{\infty}\Pe\Fe dx - \eps\gamma \int_{0}^{\infty}\px\Pe\Fe dx +\frac{1}{6}\int_{0}^{\infty} \Pe \ue^3dx\\
&\qquad-\frac{\eps }{6} \int_{0}^{\infty}\px\Pe \ue^3 dx -\eps \px\ue(t,0)\int_{0}^{\infty}\Pe dx \\
&\qquad+\eps \int_{0}^{\infty}\Pe\px\ue dx+\eps^2 \int_{0}^{\infty}\px\Pe\px\ue dx-\eps \ptx\Pe(t,0)\int_{0}^{\infty}\Pe dx.
\end{split}
\end{equation}
Due to \eqref{eq:def-di-f} and \eqref{eq:lim-di-f},
\begin{align*}
2\gamma \int_{0}^{\infty}\Pe\Fe dx&=2\gamma \int_{0}^{\infty}\Fe\px\Fe dx=\gamma (\Fe(t,\infty))^2\\
&= \frac{\eps^2 }{\gamma}\left(\ptx\Pe(t,0)+\px\ue(t,0)\right)^2,
\end{align*}
that is
\begin{equation}
\label{eq:342}
\begin{split}
2\gamma \int_{0}^{\infty}\Pe\Fe dx= &\frac{\eps^2 }{\gamma}(\ptx\Pe(t,0))^2\\
&+ \frac{2\eps^2 }{\gamma}\ptx\Pe(t,0)\px\ue(t,0)+ \frac{\eps^2 }{\gamma}(\px\ue(t,0))^2.
\end{split}
\end{equation}
Again by \eqref{eq:lim-di-f},
\begin{equation}
\label{eq:343}
\begin{split}
-2\eps \px\ue(t,0)\int_{0}^{\infty}\Pe dx&= -2\frac{\eps^2 }{\gamma}(\ptx\Pe(t,0))\px\ue(t,0) -2\frac{\eps^2 }{\gamma}(\px\ue(t,0))^2,\\
- 2\eps \ptx\Pe(t,0)\int_{0}^{\infty}\Pe dx&= -2\frac{\eps^2 }{\gamma}(\ptx\Pe(t,0))^2 - 2\frac{\eps^2 }{\gamma}\ptx\Pe(t,0)\px\ue(t,0).
\end{split}
\end{equation}
Therefore, \eqref{eq:12312}, \eqref{eq:342} and \eqref{eq:343} give
\begin{align*}
&\frac{d}{dt}\left(\int_{0}^{\infty}\Pe^2dx+ \eps^2 \int_{0}^{\infty}(\px\Pe)^2dx\right)\\
&\quad=\frac{\eps^2 }{\gamma}(\ptx\Pe(t,0))^2+ \frac{2\eps^2 }{\gamma}\ptx\Pe(t,0)\px\ue(t,0)+ \frac{\eps^2 }{\gamma}(\px\ue(t,0))^2\\
&\qquad -2\eps\gamma \int_{0}^{\infty}\px\Pe\Fe dx +\frac{1}{3} \int_{0}^{\infty} \Pe \ue^3 dx +\frac{\eps }{3} \int_{0}^{\infty}\px\Pe \ue^3 dx\\
&\qquad -2\frac{\eps^2 }{\gamma}(\ptx\Pe(t,0))\px\ue(t,0) -2\frac{\eps^2 }{\gamma}(\px\ue(t,0))^2 + 2\eps \int_{0}^{\infty}\Pe\px\ue dx\\
&\qquad +2\eps^2 \int_{0}^{\infty}\px\Pe\px\ue dx  -2\frac{\eps^2 }{\gamma}(\ptx\Pe(t,0))^2 - 2\frac{\eps^2 }{\gamma}\ptx\Pe(t,0)\px\ue(t,0),
\end{align*}
that is,
\begin{equation}
\label{eq:345}
\begin{split}
&\frac{d}{dt}\left(\int_{0}^{\infty}\Pe^2dx+ \eps^2 \int_{0}^{\infty}(\px\Pe)^2dx\right)+\frac{\eps^2 }{\gamma}\left(\ptx\Pe(t,0)+\px\ue(t,0)\right)^2\\
&\quad =  -2\eps \gamma\int_{0}^{\infty}\px\Pe\Fe dx +\frac{1}{3} \int_{0}^{\infty} \Pe\ue^3dx -\frac{\eps }{3} \int_{0}^{\infty}\px\Pe \ue^3 dx\\
&\qquad  + 2\eps \int_{0}^{\infty}\Pe\px\ue dx +2\eps^2 \int_{0}^{\infty}\px\Pe\px\ue dx.
\end{split}
\end{equation}
Thanks to \eqref{eq:SPEsw}, \eqref{eq:P-pxP-intfy}, \eqref{eq:def-di-f} and \eqref{eq:lim-di-f},
\begin{equation}
\label{eq:346}
\begin{split}
-2\eps\gamma \int_{0}^{\infty}\px\Pe\Fe dx&=2\eps\gamma \int_{0}^{\infty}\Pe\px\Fe dx\\
 &= 2\eps \gamma\int_{0}^{\infty} \Pe^2 dx\le 2\gamma  \int_{0}^{\infty} \Pe^2 dx,
\end{split}
\end{equation}
while, for \eqref{eq:SPEsw} and \eqref{eq:P-pxP-intfy},
\begin{equation}
\label{eq:347}
 2\eps \int_{0}^{\infty}\Pe\px\ue=-2\eps \int_{0}^{\infty}\ue\px\Pe dx.
\end{equation}
Hence, \eqref{eq:345}, \eqref{eq:346} and \eqref{eq:347} give
\begin{equation}
\label{eq:348}
\begin{split}
&\frac{d}{dt}\left(\norm{\Pe(t,\cdot)}^2_{L^{2}(0,\infty)}+\eps^2 \norm{\px\Pe(t,\cdot)}^2_{L^{2}(0,\infty)}\right)\\
&\qquad +\frac{\eps^2 }{\gamma}\left(\ptx\Pe(t,0)+\px\ue(t,0)\right)^2\\
&\quad \leq 2\gamma \norm{\Pe(t,\cdot)}^2_{L^{2}(0,\infty)} +\frac{1}{3} \int_{0}^{\infty} \Pe \ue^3 dx -\frac{\eps }{3} \int_{0}^{\infty}\px\Pe \ue^3 dx\\
&\qquad  - 2\eps \int_{0}^{\infty}\ue\px\Pe dx +2\eps^2 \int_{0}^{\infty}\px\Pe\px\ue dx.
\end{split}
\end{equation}
Due \eqref{eq:stima-l-2}, \eqref{eq:258} and the Young inequality,
\begin{equation}
\label{eq:Young1}
\begin{split}
&\frac{1}{3} \int_{0}^{\infty} \Pe \ue^3 dx\le\frac{1}{3} \left\vert \int_{0}^{\infty} \Pe \ue^3 dx\right\vert\le\int_{0}^{\infty}\left \vert\frac{\Pe\ue}{3}\right\vert\ue^2 dx\\
&\quad \le \frac{1}{18}\int_{0}^{\infty}\Pe^2\ue^2dx +\frac{1}{2}\int_{0}^{\infty}\ue^4 dx\le  \frac{1}{18}\norm{\Pe(t,\cdot)}^2_{L^{\infty}(0,\infty)}e^{2\gamma t}\norm{u_{0}}^2_{L^2(0,\infty)}\\
& \qquad +\frac{1}{2}\norm{\ue(t,\cdot)}^2_{L^{\infty}(0,\infty)}e^{2\gamma t}\norm{u_{0}}^2_{L^2(0,\infty)}\\
&\quad \le  \frac{1}{18}e^{3\gamma t}\norm{u_{0}}^3_{L^2(0,\infty)}\norm{\Pe(t,\cdot)}_{L^{2}(0,\infty)} +\frac{1}{2}\norm{\ue(t,\cdot)}^2_{L^{\infty}(0,\infty)}e^{2\gamma t}\norm{u_{0}}^2_{L^2(0,\infty)}.
\end{split}
\end{equation}
For \eqref{eq:stima-l-2}, \eqref{eq:10021} and the Young inequality,
\begin{equation}
\label{eq:Young2}
\begin{split}
&-\frac{\eps }{3} \int_{0}^{\infty}\px\Pe \ue^3 dx\le \frac{\eps }{3}\left\vert\int_{0}^{\infty}\px\Pe \ue^3 dx\right\vert\le \frac{\eps}{3}\int_{0}^{\infty}\vert\px\Pe\ue\vert \ue^2 dx\\
&\quad \le \frac{\eps }{6}\int_{0}^{\infty}(\px\Pe)^2\ue^2 dx + \frac{\eps}{6}\int_{0}^{\infty}\ue^4dx\\
&\quad \le \frac{\eps }{6}\norm{\px\Pe(t,\cdot)}^2_{L^{\infty}(\R)}e^{2\gamma t}\norm{u_{0}}^2_{L^2(0,\infty)}+\frac{1}{6}\norm{\ue(t,\cdot)}^2_{L^{\infty}(0,\infty)}e^{2\gamma t}\norm{u_{0}}^2_{L^2(0,\infty)}\\
&\quad\le \frac{1}{6}e^{4\gamma t}\norm{u_{0}}^4_{L^2(0,\infty)} +\frac{1}{6}\norm{\ue(t,\cdot)}^2_{L^{\infty}(0,\infty)}e^{2\gamma t}\norm{u_{0}}^2_{L^2(0,\infty)}.
\end{split}
\end{equation}
It follows from \eqref{eq:stima-l-2}, \eqref{eq:10021} and the Young inequality that
\begin{equation}
\label{eq:Young3}
\begin{split}
&-2\eps \int_{0}^{\infty}\ue\px\Pe dx\le 2\eps \left\vert \int_{0}^{\infty}\ue\px\Pe dx \right\vert\le  \int_{0}^{\infty}\left\vert\frac{\ue}{\sqrt{\gamma}}\right\vert\vert2\eps\sqrt{\gamma}\px\Pe\vert dx\\
&\quad \le \frac{1}{2\gamma}\norm{\ue(t,\cdot)}^2_{L^2(0,\infty)}+2\eps \gamma \norm{\Pe(t,\cdot)}^2_{L^2(0,\infty)}\\
&\quad \le \frac{1}{2\gamma}e^{2\gamma t}\norm{u_{0}}^2_{L^2(0,\infty)} +2\eps^2 \gamma \norm{\Pe(t,\cdot)}^2_{L^2(0,\infty)}.
\end{split}
\end{equation}
Due to \eqref{eq:10021} and the Young inequality,
\begin{equation}
\label{eq:Young4}
\begin{split}
2\eps^2 \int_{0}^{\infty}\vert\px\Pe\vert\vert\px\ue\vert dx &\le \eps^2\norm{\px\Pe(t,\cdot)}^2_{L^{2}(\R)}+\eps^2\norm{\px\ue(t,\cdot)}_{L^{2}(\R)}\\
& \le e^{2\gamma t}\norm{u_{0}}^2_{L^2(0,\infty)} +\eps^2\norm{\px\ue(t,\cdot)}^2_{L^{2}(\R)}.
\end{split}
\end{equation}
\eqref{eq:348}, \eqref{eq:Young1}, \eqref{eq:Young2} and \eqref{eq:Young3} give
\begin{equation*}
\begin{split}
&\frac{d}{dt}G(t) +\frac{\eps^2 }{\gamma}\left(\ptx\Pe(t,0)+\px\ue(t,0)\right)^2\\
&\quad \leq 2\gamma G(t) +\frac{1}{18}e^{3\gamma t}\norm{u_{0}}^3_{L^2(0,\infty)}\norm{\Pe(t,\cdot)}_{L^{2}(0,\infty)}\\
&\qquad + \frac{2}{3}\norm{\ue(t,\cdot)}^2_{L^{\infty}(0,\infty)}e^{2\gamma t}\norm{u_{0}}^2_{L^2(0,\infty)} + \frac{1}{6}e^{4\gamma t}\norm{u_{0}}^4_{L^2(0,\infty)}\\
&\qquad+\frac{1}{2\gamma}e^{2\gamma t}\norm{u_{0}}^2_{L^2(0,\infty)}+ e^{2\gamma t}\norm{u_{0}}^2_{L^2(0,\infty)}+\eps^2\norm{\px\ue(t,\cdot)}^2_{L^{2}(\R)},
\end{split}
\end{equation*}
that is
\begin{equation}
\label{eq:0001}
\begin{split}
&\frac{d}{dt}G(t)-2\gamma G(t) +\frac{\eps^2 }{\gamma}\left(\ptx\Pe(t,0)+\px\ue(t,0)\right)^2\\
&\quad \le \frac{1}{18}e^{3\gamma t}\norm{u_{0}}^3_{L^2(0,\infty)}\norm{\Pe}_{L^{\infty}(0,T;L^{2}(0,\infty))}+\frac{2}{3}\norm{\ue(t,\cdot)}^2_{L^{\infty}(0,\infty)}e^{2\gamma t}\norm{u_{0}}^2_{L^2(0,\infty)}\\
&\qquad + \frac{1}{6}e^{4\gamma t}\norm{u_{0}}^4_{L^2(0,\infty)}+\frac{1}{2\gamma}e^{2\gamma t}\norm{u_{0}}^2_{L^2(0,\infty)} + e^{2\gamma t}\norm{u_{0}}^2_{L^2(0,\infty)}\\
&\qquad +\eps^2\norm{\px\ue(t,\cdot)}^2_{L^{2}(\R)},
\end{split}
\end{equation}
where
\begin{equation}
\label{eq:def-di-G}
G(t)=\norm{\Pe(t,\cdot)}^2_{L^2(0,\infty)} + \eps^2\norm{\px\Pe(t,\cdot)}^2_{L^2(0,\infty)}.
\end{equation}
The Gronwall Lemma, \eqref{eq:u0eps} and \eqref{eq:stima-l-2} give
\begin{equation}
\label{eq:Grom1}
\begin{split}
&G(t)+\frac{\eps^2 e^{2\gamma t} }{\gamma}\int_{0}^{t}\left(\ptx\Pe(s,0)+\px\ue(s,0)\right)^2 ds\\
&\quad \le\norm{P_{0}}^2_{L^2(0,\infty)}e^{2\gamma t} +C_{0}e^{2\gamma t} + C_{0}e^{2\gamma t}\norm{\Pe}_{L^{\infty}(0,T;L^{2}(0,\infty))}\int_{0}^{t}e^{\gamma s}ds\\
&\qquad + C_{0}e^{2\gamma t}\int_{0}^{t}\norm{\ue(s,\cdot)}^2_{L^{\infty}(0,\infty)}ds+C_{0}t + C_{0}e^{2\gamma t}\int_{0}^{t}e^{2\gamma s} ds.
\end{split}
\end{equation}
Due to \eqref{eq:linfty-u} and the Young inequality,
\begin{equation}
\label{eq:Young6}
\begin{split}
\norm{\ue(t,\cdot)}^2_{L^{\infty}(0,\infty)}\le& \norm{u_0}^2_{L^\infty(0,\infty)}+2\gamma\norm{u_0}_{L^\infty(0,\infty)}\int_{0}^{t}\norm{\Pe(s,\cdot)}_{L^{\infty}(0,\infty)}ds\\
&+\gamma^2\left(\int_{0}^{t}\norm{\Pe(s,\cdot)}_{L^{\infty}(0,\infty)}ds\right)^2\\
\le &2\norm{u_0}^2_{L^\infty(0,\infty)}+ \gamma^2\left(\int_{0}^{t}\norm{\Pe(s,\cdot)}_{L^{\infty}(0,\infty)}ds\right)^2.
\end{split}
\end{equation}
It follows from \eqref{eq:258}, \eqref{eq:Young6} and the Jensen inequality that
\begin{equation}
\label{eq:Jensen1}
\begin{split}
\norm{\ue(t,\cdot)}^2_{L^{\infty}(0,\infty)}\le &2\norm{u_0}^2_{L^\infty(0,\infty)} + \gamma^2 t  \int_{0}^{t}\norm{\Pe(s,\cdot)}^2_{L^{\infty}(0,\infty)}ds\\
\le& C_{0} + \gamma C_{0} t \int_{0}^{t} e^{\gamma s}\norm{\Pe(s,\cdot)}_{L^{2}(0,\infty)} ds.
\end{split}
\end{equation}
Therefore
\begin{equation}
\label{eq:u-infty}
\norm{\ue(t,\cdot)}^2_{L^{\infty}(0,\infty)}\le C_{0} + C(T)\norm{\Pe}_{L^{\infty}(0,T;L^{2}(0,\infty))}
\end{equation}
\eqref{eq:def-di-G}, \eqref{eq:Grom1} and \eqref{eq:u-infty} give
\begin{equation}
\label{eq:00134}
\begin{split}
&\norm{\Pe(t,\cdot)}^2_{L^2(0,\infty)} + \eps^2\norm{\px\Pe(t,\cdot)}^2_{L^2(0,\infty)}\\
&\qquad +\frac{\eps^2 e^{2\gamma t}}{\gamma}\int_{0}^{t} e^{-2\gamma s} \left(\ptx\Pe(s,0)+\px\ue(s,0)\right)^2 ds\\
&\quad\le C(T)+C(T)\norm{\Pe}_{L^{\infty}(0,T;L^{2}(0,\infty))}.
\end{split}
\end{equation}
It follows from \eqref{eq:00134} that
\begin{equation*}
\norm{\Pe}^2_{L^{\infty}(0,T;L^{2}(0,\infty))}-C(T)\norm{\Pe}_{L^{\infty}(0,T;L^{2}(0,\infty))}-C(T)\le 0,
\end{equation*}
which gives \eqref{eq:l-2-P}.

\eqref{eq:L-2-P}, \eqref{eq:L-2-px-P-1} and \eqref{eq:l-2-px-u} follow from \eqref{eq:00134} and \eqref{eq:l-2-P}. \eqref{eq:258} and \eqref{eq:L-2-P} give \eqref{eq:P-infty-4}, while \eqref{eq:u-infty-5} follows from \eqref{eq:linfty-u} and \eqref{eq:P-infty-4}.

Let us show that \eqref{eq:pt-px-P} holds true. We begin by observing that, thanks to \eqref{eq:stima-l-2},
\begin{equation}
\begin{split}
\label{eq:ux-12}
\eps\int_{0}^{t}\norm{\px\ue(s,\cdot)}^2_{L^2(0,\infty)}ds&\le \eps e^{2\gamma t} \int_{0}^{t}e^{-2\gamma s}\norm{\px\ue(s,\cdot)}^2_{L^2(0,\infty)}ds\\
&\le \frac{e^{2\gamma t}}{2}\norm{u_{0}}^2_{L^2(0,\infty)}\le C(T).
\end{split}
\end{equation}
Multiplying \eqref{eq:equat-per-P} by $\Pe$, an integration on $(0,\infty)$ gives
\begin{align*}
2\eps\int_{0}^{\infty}\Pe\ptx\Pe dx=& \frac{d}{dt}\norm{\Pe(t,\cdot)}^2_{L^2(0,\infty)}-2\gamma\int_{0}^{\infty}\Pe\Fe dx +2\int_{0}^{\infty}\Pe f(\ue)dx\\
& -2\eps\int_{0}^{\infty}\Pe\px\ue dx + 2\eps \px\ue(t,0)\int_{0}^{\infty}\Pe dx\\
& + 2\eps\ptx\Pe(t,0)\int_{0}^{\infty}\Pe dx.
\end{align*}
It follows from \eqref{eq:def-di-f}, \eqref{eq:lim-di-f}, \eqref{eq:342} and \eqref{eq:343} that
\begin{align*}
2\eps\int_{0}^{\infty}\Pe\ptx\Pe dx=& \frac{d}{dt}\norm{\Pe(t,\cdot)}^2_{L^2(0,\infty)}- \frac{\eps^2}{\gamma}(\ptx\Pe(t,0))^2\\
&- \frac{2\eps^2}{\gamma}\ptx\Pe(t,0)\px\ue(t,0)- \frac{\eps^2}{\gamma}(\px\ue(t,0))^2\\
& +\frac{1}{6}\int_{0}^{\infty}\Pe \ue^3dx -2\eps\int_{0}^{\infty}\Pe\px\ue dx\\
&+2\frac{\eps^2}{\gamma}\ptx\Pe(t,0)\px\ue(t,0) +2\frac{\eps^2}{\gamma}(\px\ue(t,0))^2\\
&+2\frac{\eps^2}{\gamma}(\ptx\Pe(t,0))^2  +2\frac{\eps^2}{\gamma}\ptx\Pe(t,0)\px\ue(t,0),
\end{align*}
that is,
\begin{align*}
2\eps\int_{0}^{\infty}\Pe\ptx\Pe dx=& \frac{d}{dt}\norm{\Pe(t,\cdot)}^2_{L^2(0,\infty)}-\frac{\eps^2}{\gamma}\left(\ptx\Pe(t,0)-\px\ue(t,0)\right)^2\\
&+\frac{1}{6}\int_{0}^{\infty}\Pe \ue^3dx +2\eps\int_{0}^{\infty}\Pe\px\ue dx.
\end{align*}
An integration on $(0,t)$ gives
\begin{align*}
2\eps\int_{0}^{t}\!\!\!\int_{0}^{\infty}\Pe\ptx\Pe dsdx=&\norm{\Pe(t,\cdot)}^2_{L^2(0,\infty)}-\norm{P_{\eps,0}}^2_{L^2(0,\infty)}\\
&-\frac{\eps^2}{\gamma}\int_{0}^{t}\left(\ptx\Pe(s,0)-\px\ue(s,0)\right)^2 ds\\
&+\frac{1}{6}\int_{0}^{t}\!\!\!\int_{0}^{\infty}\Pe \ue^3dx -2\eps\int_{0}^{t}\!\!\!\int_{0}^{\infty}\Pe\px\ue dsdx.
\end{align*}
It follows from \eqref{eq:u0eps}, \eqref{eq:stima-l-2}, \eqref{eq:L-2-P}, \eqref{eq:P-infty-4}, \eqref{eq:u-infty-5} and \eqref{eq:Young1} that
\begin{align*}
2\eps\left\vert\int_{0}^{t}\!\!\!\int_{0}^{\infty}\Pe\ptx\Pe dsdx\right\vert\le&\norm{\Pe(t,\cdot)}^2_{L^2(0,\infty)}+\norm{P_{\eps,0}}^2_{L^2(0,\infty)}\\       &+\frac{\eps^2}{\gamma}\int_{0}^{t}\left(\ptx\Pe(s,0)-\px\ue(s,0)\right)^2 ds\\
&+2\eps\int_{0}^{t}\!\!\!\int_{0}^{\infty}\vert\Pe\vert\vert\px\ue\vert dsdx+C(T)\\
\le&\norm{P_{0}}^2_{L^2(0,\infty)}+\frac{\eps^2 e^{2\gamma t}}{\gamma}\int_{0}^{t}e^{-2\gamma s}\left(\ptx\Pe(s,0)-\px\ue(s,0)\right)^2 ds\\
&+2\eps\int_{0}^{t}\!\!\!\int_{0}^{\infty}\vert\Pe\vert\vert\px\ue\vert dsdx+C(T)\\
\le & \norm{P_{0}}^2_{L^2(0,\infty)} +2\eps\int_{0}^{t}\!\!\!\int_{0}^{\infty}\vert\Pe\vert\vert\px\ue\vert dsdx+C(T)
\end{align*}
Due to \eqref{eq:L-2-P} and the Young inequality,
\begin{equation}
\label{eq:Young7}
\begin{split}
&2\eps\int_{0}^{\infty}\vert\Pe\vert\vert\px\ue\vert dx=2\int_{0}^{\infty}\vert\Pe\vert\vert\eps\px\ue\vert dx\\
&\quad\le \norm{\Pe(t,\cdot)}^2_{L^2(0,\infty)}+\eps^2\norm{\px\ue(t,\cdot)}^2_{L^2(0,\infty)}\\
&\quad\le C(T) + \eps^2\norm{\px\ue(t,\cdot)}^2_{L^2(0,\infty)}.
\end{split}
\end{equation}
Thus, for \eqref{eq:ux-12} and \eqref{eq:Young7}, we have that
\begin{equation*}
2\eps\int_{0}^{t}\!\!\!\int_{0}^{\infty}\vert\Pe\vert\vert\px\ue\vert dsdx \le \int_{0}^{t}\norm{\Pe(s,\cdot)}^2_{L^2(0,\infty)} ds +  \eps^2\int_{0}^{t}\norm{\px\ue(s,\cdot)}^2_{L^2(0,\infty)} ds\le C(T).
\end{equation*}
Therefore,
\begin{equation*}
2\eps\left\vert\int_{0}^{t}\!\!\!\int_{0}^{\infty}\Pe\ptx\Pe dsdx\right\vert\le \norm{P_{0}}^2_{L^2(0,\infty)}+C(T),
\end{equation*}
which gives \eqref{eq:pt-px-P}.
\end{proof}
Let us continue by proving the existence of  a distributional solution
to  \eqref{eq:SPE}, \eqref{eq:boundary}, \eqref{eq:init}  satisfying \eqref{eq:SPEentropyboundary}.
\begin{lemma}\label{lm:conv}
Let $T>0$. There exists a function $u\in L^{\infty}((0,T)\times (0,\infty))$ that is a distributional
solution of \eqref{eq:SPEw} and satisfies \eqref{eq:SPEentropyboundary}  for every convex entropy $\eta\in C^2(\R)$.
\end{lemma}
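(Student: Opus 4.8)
The plan is to pass to the limit in the viscous approximation \eqref{eq:SPEsw} as $\eps\to 0$ along a suitable subsequence, via the compensated compactness method \cite{TartarI}. By the uniform bounds \eqref{eq:P-infty-4} and \eqref{eq:u-infty-5}, the families $\{\ue\}$ and $\{\Pe\}$ are bounded in $L^{\infty}((0,T)\times(0,\infty))$, so up to a subsequence $\ue\weakstar u$ and $\Pe\weakstar P$ in $L^\infty$. The convergence of $P$ can be partially upgraded: from \eqref{eq:10021} the family $\px\Pe$ is bounded in $L^\infty(0,T;L^2(0,\infty))$, hence $\eps\px\Pe\to 0$ strongly in $L^2$. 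Rewriting the second equation of \eqref{eq:SPEsw} as $\px\Pe=\ue+\px(\eps\px\Pe)$ and letting $\eps\to 0$ in $\Dp$ then gives $\px P=u$, while the homogeneous conditions $\ue(t,0)=\Pe(t,0)=0$ are inherited in the trace sense.

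The crux is the strong convergence of $\ue$. Fix a convex $\eta\in C^2(\R)$ with entropy flux $q$ determined by $q'(u)=-\frac{u^2}{2}\eta'(u)$. Multiplying the first equation of \eqref{eq:SPEsw} by $\eta'(\ue)$ yields
\[
\pt\eta(\ue)+\px q(\ue)=\gamma\eta'(\ue)\Pe+\eps\px(\eta'(\ue)\px\ue)-\eps\eta''(\ue)(\px\ue)^2.
\]
I would decompose the right-hand side into a piece precompact in $\Hneg$ and a piece bounded in $\CMloc$. Using \eqref{eq:stima-l-2}, which bounds $\eps\int_0^t\norm{\px\ue(s,\cdot)}^2_{L^2(0,\infty)}ds$ uniformly, we have $\eps\eta'(\ue)\px\ue=\sqrt{\eps}\,\eta'(\ue)\cdot\sqrt{\eps}\,\px\ue\to 0$ in $L^2_{\loc}$, so $\eps\px(\eta'(\ue)\px\ue)\to 0$ in $\Hneg$; the dissipation $-\eps\eta''(\ue)(\px\ue)^2$ is bounded in $L^1_{\loc}$, hence in $\CMloc$; and $\gamma\eta'(\ue)\Pe$ is bounded in $L^1_{\loc}$ (being uniformly bounded in $L^\infty$), hence in $\CMloc$. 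Since $\eta(\ue)$ and $q(\ue)$ are bounded in $L^\infty$, the left-hand side is bounded in $W^{-1,\infty}_{\loc}$, so Murat's lemma gives that $\pt\eta(\ue)+\px q(\ue)$ lies in a compact subset of $\Hneg$. As the flux $f(u)=-u^3/6$ is genuinely nonlinear in the sense required (its derivative $f'(u)=-u^2/2$ is affine on no interval), the Tartar--Murat reduction of the associated Young measure to a Dirac mass applies, and $\ue\to u$ strongly in $L^p_{\loc}((0,T)\times(0,\infty))$ for every $p<\infty$.

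With strong convergence of $\ue$ in hand I would pass to the limit in \eqref{eq:SPEsw}: the nonlinear term converges since $\ue^3\to u^3$ in $L^1_{\loc}$, the viscous term $\eps\pxx\ue=\px(\eps\px\ue)\to 0$ in $\Dp$, and $\gamma\Pe\weakstar\gamma P$, so $u$ is a distributional solution of \eqref{eq:SPEw}. For the entropy inequality I would drop the non-positive dissipation term (convexity of $\eta$) in the identity above and pass to the limit, recovering \eqref{eq:SPEentropy}; since $u$ then satisfies the entropy inequalities, \cite[Theorem 1.1]{CKK} provides the strong trace $u_0^\tau$ on $x=0$. Finally, for the boundary entropy condition \eqref{eq:SPEentropyboundary} I would combine $\ue(t,0)=0$ with the viscous entropy balance and carry out the standard vanishing-viscosity boundary analysis, testing against functions concentrating near $x=0$.

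The main obstacle is the compensated-compactness step: verifying rigorously the $\Hneg$-compactness through Murat's lemma, by controlling each piece in the correct space uniformly in $\eps$, and then invoking the Young-measure reduction under genuine nonlinearity. A secondary difficulty is the careful treatment of the boundary, namely transferring the homogeneous Dirichlet condition on $\ue$ into the boundary entropy inequality \eqref{eq:SPEentropyboundary} for the trace $u_0^\tau$, which must be handled within the strong-trace framework of \cite{CKK} rather than by a naive pointwise limit at $x=0$.
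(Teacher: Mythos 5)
Your proposal is correct and follows essentially the same route as the paper: the identical entropy decomposition $\eps\px(\eta'(\ue)\px\ue)-\eps\eta''(\ue)(\px\ue)^2+\gamma\eta'(\ue)\Pe$, the same uniform bounds \eqref{eq:stima-l-2}, \eqref{eq:P-infty-4}, \eqref{eq:u-infty-5} feeding Murat's lemma and Tartar's compensated compactness, and the same appeal to \cite[Theorem 1.1]{CKK} together with the vanishing-viscosity boundary analysis of \cite{CKK} for \eqref{eq:SPEentropyboundary}. The only cosmetic difference is that you identify $\px P=u$ by passing to the limit in the elliptic equation distributionally, while the paper uses the integrated identity \eqref{eq:P-int-in-0} and the decay $\eps\px\Pe\to 0$ in $L^{\infty}$ from \eqref{eq:10021}; these are equivalent.
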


We  construct a solution by passing
to the limit in a sequence $\Set{u_{\eps}}_{\eps>0}$ of viscosity
approximations \eqref{eq:SPEsw}. We use the
compensated compactness method \cite{TartarI}.

\begin{lemma}\label{lm:conv-u}
Let $T>0$. There exists a subsequence
$\{\uek\}_{k\in\N}$ of $\{\ue\}_{\eps>0}$
and a limit function $  u\in L^{\infty}((0,T)\times(0,\infty))$
such that
\begin{equation}\label{eq:convu}
    \textrm{$\uek \to u$ a.e.~and in $L^{p}_{loc}((0,T)\times(0,\infty))$, $1\le p<\infty$}.
\end{equation}
In particular, \eqref{eq:u-media-nulla} holds true.\\
Moreover, we have
\begin{equation}
\label{eq:conv-P}
\textrm{$\Pek \to P$ a.e.~and in $L^{p}_{loc}(0,T;W^{1,p}_{loc}(0,\infty))$, $1\le p<\infty$},
\end{equation}
where
\begin{equation}
\label{eq:tildeu}
P(t,x)=\int_0^x u(t,y)dy,\qquad t\ge 0,\quad x\ge 0.
\end{equation}
\end{lemma}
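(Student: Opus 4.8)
The plan is to obtain $u$ as an a.e.\ limit of the viscous approximations through Tartar's compensated compactness method \cite{TartarI}, following the scheme of \cite{Cd}. By the $L^\infty$ bound \eqref{eq:u-infty-5} the family $\{\ue\}_{\eps>0}$ is bounded in $L^\infty((0,T)\times(0,\infty))$, so after extracting a subsequence there is a Young measure $\{\nu_{t,x}\}_{(t,x)}$, a family of probability measures on $\R$, such that $g(\uek)\weakstar\langle\nu_{t,x},g\rangle$ in $L^\infty$ for every $g\in C(\R)$. The whole argument reduces to proving that $\nu_{t,x}$ is a Dirac mass for a.e.\ $(t,x)$, which upgrades this weak-$\star$ convergence to the a.e.\ convergence \eqref{eq:convu}.

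The key step is the $\Hneg$-compactness of the entropy dissipation. Fix a convex $\eta\in C^2(\R)$ and let $q$ be the associated flux with $q'(u)=-\tfrac{u^2}{2}\eta'(u)$. Multiplying the first equation of \eqref{eq:SPEsw} by $\eta'(\ue)$ and using $-\tfrac16\px\ue^3=-\tfrac{\ue^2}{2}\px\ue$, we obtain
\[
\pt\eta(\ue)+\px q(\ue)=\gamma\eta'(\ue)\Pe+\eps\px\big(\eta'(\ue)\px\ue\big)-\eps\eta''(\ue)(\px\ue)^2.
\]
I would then split the right-hand side into a part that is precompact in $\Hneg$ and a part that is bounded in $\CMloc$. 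Indeed, $\gamma\eta'(\ue)\Pe$ is bounded in $L^\infty_{\loc}$ by \eqref{eq:u-infty-5} and \eqref{eq:P-infty-4}, hence bounded in $L^2_{\loc}$ and therefore precompact in $\Hneg$ by Rellich's theorem; since $\eps\int_0^t\norm{\px\ue(s,\cdot)}^2_{L^2(0,\infty)}ds$ is bounded uniformly in $\eps$ by \eqref{eq:ux-12}, the term $\eps\eta'(\ue)\px\ue\to0$ in $L^2_{\loc}$ and thus $\eps\px(\eta'(\ue)\px\ue)\to0$ in $\Hneg$; finally $\eps\eta''(\ue)(\px\ue)^2$ is bounded in $L^1_{\loc}$, hence in $\CMloc$. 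As $\eta(\ue)$ and $q(\ue)$ are bounded in $L^\infty_{\loc}$, the left-hand side is bounded in $W^{-1,p}_{\loc}$ for every $p$, so Murat's lemma gives that $\pt\eta(\ue)+\px q(\ue)$ is precompact in $\Hneg((0,T)\times(0,\infty))$.

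With this compactness in hand I would apply Tartar's div--curl commutation relation to two entropy pairs $(\eta_1,q_1)$ and $(\eta_2,q_2)$, getting $\langle\nu,\eta_1q_2-\eta_2q_1\rangle=\langle\nu,\eta_1\rangle\langle\nu,q_2\rangle-\langle\nu,\eta_2\rangle\langle\nu,q_1\rangle$ for a.e.\ $(t,x)$. Because the flux $f(u)=-u^3/6$ is genuinely nonlinear---$f''(u)=-u$ vanishes only at $u=0$, so $f$ is affine on no interval---this relation forces $\nu_{t,x}$ to collapse to a single Dirac mass $\delta_{u(t,x)}$, whence $\uek\to u$ a.e. Combined with the uniform $L^\infty$ bound, dominated convergence then yields \eqref{eq:convu}.

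It remains to pass to the limit in $\Pe$. Letting $\eps=\eps_k\to0$ in the representation \eqref{eq:P-int-in-0} and using $\sqrt{\eps}\,\abs{\px\Pe(t,0)}\le C(T)$ from \eqref{eq:10021} together with the $L^2$ bound on $\px\Pe$, the viscous contributions $\eps\px\Pe$ and $\eps\px\Pe(t,0)$ vanish while $\int_0^x\uek\,dy\to\int_0^x u\,dy$ by the $L^1_{\loc}$ convergence; this identifies the limit $P$ as in \eqref{eq:tildeu} and gives $\Pek\to P$ in $L^p_{\loc}$. For the gradient, the second equation of \eqref{eq:SPEsw} reads $\px\Pe=\ue+\eps\pxx\Pe$, and the $L^2$ identity \eqref{eq:equ-L2-stima} lets me combine the weak convergence $\px\Pek\weak u$ with convergence of the $L^2$-norms to deduce the strong convergence $\px\Pek\to u$, hence \eqref{eq:conv-P}. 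The mean-zero property \eqref{eq:u-media-nulla} follows by passing to the limit in \eqref{eq:int-u}, since $\int_0^\infty\uek\,dx=\eps\px\Pe(t,0)=\sqrt{\eps}\,\big(\sqrt{\eps}\,\px\Pe(t,0)\big)\to0$. I expect the main obstacle to be the second and third steps: constructing the correct $\Hneg$-compact / $\CMloc$-bounded decomposition of the entropy dissipation and executing the Young-measure reduction, where the genuine nonlinearity of the cubic flux is essential.
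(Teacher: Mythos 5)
Your argument is, in its architecture, the same as the paper's: an entropy dissipation decomposition fed into Murat's lemma, Tartar's compensated compactness with the genuine nonlinearity of the cubic flux (here your observation that $f''(u)=-u$ vanishes only at the origin is exactly what makes the Young-measure reduction work), the identity \eqref{eq:int-u} together with $\sqrt{\eps}\,\vert\px\Pe(t,0)\vert\le C(T)$ from \eqref{eq:10021} for the mean-zero property, and the representation \eqref{eq:P-int-in-0} with the vanishing of $\eps\px\Pe$ and $\eps\px\Pe(\cdot,0)$ (the paper's \eqref{eq:px-0}, \eqref{eq:px-1}) for the convergence of $\Pek$. Your only structural variation is harmless and arguably cleaner: you place the source term $\gamma\eta'(\ue)\Pe$ in the $\Hneg$-precompact part via $L^2_{\loc}$-boundedness and Rellich, whereas the paper puts it in the $L^1_{\loc}$-bounded (measure) part of Murat's decomposition; both are legitimate inputs to the same lemma.

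There is, however, one genuine gap: your final step deducing the strong convergence $\px\Pek\to u$ in $L^p_{\loc}$ from ``weak convergence plus convergence of the $L^2$-norms'' via \eqref{eq:equ-L2-stima} does not close. That identity reads $\eps^2\norm{\pxx\Pe(t,\cdot)}^2_{L^2(0,\infty)}+\eps(\px\Pe(t,0))^2+\norm{\px\Pe(t,\cdot)}^2_{L^2(0,\infty)}=\norm{\ue(t,\cdot)}^2_{L^2(0,\infty)}$, so it only yields the one-sided bound $\norm{\px\Pek(t,\cdot)}_{L^2(0,\infty)}\le\norm{\uek(t,\cdot)}_{L^2(0,\infty)}$: the two extra nonnegative terms are merely \emph{bounded} by \eqref{eq:10021}, not vanishing, and the convergence of the global norms $\norm{\uek(t,\cdot)}_{L^2(0,\infty)}\to\norm{u(t,\cdot)}_{L^2(0,\infty)}$ is not available from \eqref{eq:convu}, which is only a.e.\ and $L^p_{\loc}$ convergence (mass may escape to $x=\infty$). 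So the uniform-convexity upgrade from weak to strong convergence stalls. The fix is elementary and bypasses \eqref{eq:equ-L2-stima} entirely: solving the elliptic equation $-\eps\pxx\Pe+\px\Pe=\ue$ with the decay condition \eqref{eq:P-pxP-intfy} gives $\px\Pe(t,x)=\frac{1}{\eps}\int_x^\infty e^{-(y-x)/\eps}\,\ue(t,y)\,dy$, i.e.\ $\px\Pe$ is an $\eps$-width approximate-identity average of $\ue$; combined with \eqref{eq:convu} and the uniform bound \eqref{eq:u-infty-5}, standard mollifier estimates give $\px\Pek\to u$ in $L^p_{\loc}$, which is \eqref{eq:conv-P}. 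Note also that the paper's proof of this lemma additionally records, via the strong-trace results of \cite{CKK}, the boundary entropy condition \eqref{eq:SPEentropyboundary} for the limit; that is not part of the statement you were given, but it is needed downstream and your proposal omits it.
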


\begin{proof}
Let $\eta:\R\to\R$ be any convex $C^2$ entropy function, and let
$q:\R\to\R$ be the corresponding entropy
flux defined by $q'(u)=-\frac{u^2}{2}\eta'(u)$.
By multiplying the first equation in \eqref{eq:SPEsw} with
$\eta'(\ue)$ and using the chain rule, we get
\begin{equation*}
    \pt  \eta(\ue)+\px q(\ue)
    =\underbrace{\eps \pxx \eta(\ue)}_{=:\CLea_{1,\eps}}
    \, \underbrace{-\eps \eta''(\ue)\left(\px  \ue\right)^2}_{=: \CLea_{2,\eps}}
     \, \underbrace{+\gamma\eta'(\ue) \Pe}_{=: \CLea_{3,\eps}},
\end{equation*}
where  $\CLea_{1,\eps}$, $\CLea_{2,\eps}$, $\CLea_{3,\eps}$ are distributions.

Let us show that
\begin{equation*}
\label{eq:H1}
\textrm{$\CLea_{1,\eps}\to 0$ in $H^{-1}((0,T)\times(0,\infty))$, $T>0$.}
\end{equation*}

Since
\begin{equation*}
\eps\pxx\eta(\ue)=\px(\eps\eta'(\ue)\px\ue),
\end{equation*}
for \eqref{eq:stima-l-2} and \eqref{eq:u-infty-5},
\begin{align*}
\norm{\eps\eta'(\ue)\px\ue}^2_{L^2((0,T)\times (0,\infty))}&\le\eps ^2\norm{\eta'}^2_{L^{\infty}(I_T)}\int_{0}^{T}\norm{\px\ue(s,\cdot)}^2_{L^2(0,\infty)}ds\\
&\le\eps\norm{\eta'}^2_{L^{\infty}(I_T)}C(T)\to 0,
\end{align*}
where
\begin{equation*}
I_T=\left(- C(T), C(T)\right).
\end{equation*}
We claim that
\begin{equation*}
\label{eq:L1}
\textrm{$\{\CLea_{2,\eps}\}_{\eps>0}$ is uniformly bounded in $L^1((0,T)\times(0,\infty))$, $T>0$}.
\end{equation*}
Again by \eqref{eq:stima-l-2} and \eqref{eq:u-infty-5},
\begin{align*}
\norm{\eps\eta''(\ue)(\px\ue)^2}_{L^1((0,T)\times (0,\infty))}&\le
\norm{\eta''}_{L^{\infty}(I_T)}\eps
\int_{0}^{T}\norm{\px\ue(s,\cdot)}^2_{L^2(0,\infty)}ds\\
&\le \norm{\eta''}_{L^{\infty}(I_T)}C(T).
\end{align*}
We have that
\begin{equation*}
\textrm{$\{\CL_{3,\eps}\}_{\eps>0}$ is uniformly bounded in $L^1_{loc}((0,T)\times (0,\infty))$, $T>0$.}
\end{equation*}
Let $K$ be a compact subset of $(0,T)\times (0,\infty)$. For \eqref{eq:P-infty-4} and \eqref{eq:u-infty-5},
\begin{align*}
\norm{\gamma\eta'(\ue)\Pe}_{L^1(K)}&=\gamma\int_{K}\vert\eta'(\ue)\vert\vert\Pe\vert
dtdx\\
&\leq \gamma
\norm{\eta'}_{L^{\infty}(I_T)}\norm{\Pe}_{L^{\infty}((0,T)\times\R)}\vert K \vert .
\end{align*}
Therefore, Murat's Lemma \cite{Murat:Hneg} implies that
\begin{equation}
\label{eq:GMC1}
    \text{$\left\{  \pt  \eta(\ue)+\px q(\ue)\right\}_{\eps>0}$
    lies in a compact subset of $\Hneg((0,T)\times(0,\infty))$.}
\end{equation}
\eqref{eq:u-infty-5}, \eqref{eq:GMC1}, and the Tartar's compensated compactness method \cite{TartarI} give the existence of a subsequence
$\{\uek\}_{k\in\N}$ and a limit function $  u\in L^{\infty}((0,T)\times(0,\infty)),\,T>0,$
such that \eqref{eq:convu} holds.

Let us show that \eqref{eq:u-media-nulla} holds true.\\
We begin by proving that
\begin{equation}
\label{eq:px-0}
\textrm{$\eps\px\Pe(\cdot,0)\to 0$ in $L^{\infty}(0,T)$, $T>0$.}
\end{equation}
For \eqref{eq:stima-l-2} and \eqref{eq:10021},
\begin{equation*}
\eps\norm{\px\Pe(\cdot,0)}_{L^{\infty}(0,T)}\leq \sqrt{\eps} e^{\gamma T}\norm{u_0}^2_{L^2(0,\infty)}=\sqrt{\eps}C(T)\to 0,
\end{equation*}
that is \eqref{eq:px-0}.\\
Therefore, \eqref{eq:u-media-nulla} follows from \eqref{eq:int-u}, \eqref{eq:convu} and \eqref{eq:px-0}.

Finally, we prove \eqref{eq:conv-P}.\\
We show that
\begin{equation}
\label{eq:px-1}
\textrm{$\px\Pe\to 0$ in $L^{\infty}((0,T)\times(0,\infty))$, $T>0$.}
\end{equation}
It follows from \eqref{eq:stima-l-2} and \eqref{eq:10021} that
\begin{equation*}
\eps\norm{\px\Pe}_{L^{\infty}((0,T)\times(0,\infty))}\leq \sqrt{\eps} e^{\gamma T}\norm{u_0}^2_{L^2(0,\infty)}=\sqrt{\eps}C(T)\to 0,
\end{equation*}
that is \eqref{eq:px-1}.\\
Then, \eqref{eq:P-int-in-0}, \eqref{eq:convu}, \eqref{eq:px-0}, \eqref{eq:px-1} and  the H\"older inequality give \eqref{eq:conv-P}.

Moreover, \cite[Theorem 1.1]{CKK} tells us that the limit $u$ admits strong boundary trace $u^\tau_0$ at
$(0,\infty)\times \{x=0\}$.
Since, arguing as in \cite[Section 3.1]{CKK} (indeed
our solution is obtained as the vanishing viscosity
limit of \eqref{eq:SPEw}), \cite[Lemma 3.2]{CKK} and
the boundedness of the source term $P$ (cf.~\eqref{eq:SPEsmooth})
imply \eqref{eq:SPEentropyboundary}.
\end{proof}
\begin{proof}[Proof of Theorem \ref{th:main}]
Lemma \eqref{lm:conv-u} gives the existence of entropy  solution $u(t,x)$ of \eqref{eq:SPE-u}, or
equivalently \eqref{eq:SPEw}. Moreover, it proves that \eqref{eq:u-media-nulla} holds true.\\
We observe that, fixed $T>0$,  the solutions of \eqref{eq:SPE-u}, or
equivalently \eqref{eq:SPEw}, are bounded in $(0,T)\times (0,\infty)$. Therefore, using \cite[Theorem $2.1$]{CdK}, or \cite[Theorem $2.2.1$]{dR}, $u$ is unique and \eqref{eq:stability} holds true.
\end{proof}

\section{The Cauchy problem}\label{sec:2}
Let us consider now the Cauchy problem associated to \eqref{eq:SPE}.
Since the arguments are similar to the one of the previous section we simply sketch them, highlighting only the differences
between the two problems.

In this section we augment \eqref{eq:SPE} with the initial datum
\begin{equation}
\label{eq:init-1}
u(0,x)=u_0(x), \qquad x\in\R.
\end{equation}
We assume that
\begin{equation}
\label{eq:assinit-1}
u_0\in L^{\infty}(\R)\cap L^{1}(\R), \quad \int_{\R}u_{0}(x) dx =0.
\end{equation}
On the function
\begin{equation}
\label{eq:def-di-P0-1}
P_{0}(x)=\int_{-\infty}^{x}u_{0}(y)dy,
\end{equation}
we assume that
\begin{equation}
\label{eq:l-2-di-P0-1}
\norm{P_{0}}^2_{L^2(\R)}=\int_{\R}\left(\int_{-\infty}^{x} u_{0}(y)dy\right)^2 dx < \infty.
\end{equation}

We rewrite the Cauchy problem  \eqref{eq:SPE}, \eqref{eq:init-1} in the following way
\begin{equation}
\label{eq:SPEw-u-1}
\begin{cases}
\displaystyle \pt u-\frac{1}{6}\px u^3=\gamma \int^{x}_{0} u(t,y) dy,&\qquad t>0, \ x\in\R,\\
u(0,x)=u_0(x), &\qquad x\in\R,
\end{cases}
\end{equation}
or equivalently
\begin{equation}
\label{eq:SPEw-1}
\begin{cases}
\displaystyle\pt u-\frac{1}{6} \px u^3=\gamma P,&\qquad t>0, \ x\in\R ,\\
\px P=u,&\qquad t>0, \ x\in\R,\\
 P(t,0)=0,& \qquad t>0,\\
u(0,x)=u_0(x), &\qquad x\in\R.
\end{cases}
\end{equation}

Due to the regularizing effect of the $P$ equation in \eqref{eq:SPEw-1} we have that
\begin{equation*}
    u\in L^{\infty}((0,T)\times\R)\Longrightarrow P\in L^{\infty}((0,T);W^{1_,\infty}(\R)), \quad T>0.
\end{equation*}

\begin{definition}
\label{def:sol-1}
We say that $u\in  L^{\infty}((0,T)\times\R),\ T>0$ is an entropy solution of the initial
value problem \eqref{eq:SPE}, and  \eqref{eq:init-1} if
\begin{itemize}
\item[$i$)] $u$ is a distributional solution of \eqref{eq:SPEw-u-1} or equivalently of \eqref{eq:SPEw-1};
\item[$ii$)] for every convex function $\eta\in  C^2(\R)$ the
entropy inequality
\begin{equation}
\label{eq:SPEentropy-1}
\pt \eta(u)+ \px q(u)-\gamma\eta'(u) P\le 0, \qquad     q(u)=-\int^u \frac{\xi^2}{2} \eta'(\xi)\, d\xi,
\end{equation}
holds in the sense of distributions in $(0,\infty)\times\R$.
\end{itemize}
\end{definition}

The main result of this section is the following theorem.

\begin{theorem}
\label{th:main-1}
Assume  \eqref{eq:assinit-1} and \eqref{eq:def-di-P0-1}.
The initial value problem \eqref{eq:SPE}, \eqref{eq:init-1}, possesses an unique entropy solution $u$ in the sense of Definition \ref{def:sol-1}. In particular, we have that
\begin{equation}
\label{eq:u-media-nulla-1}
\int_{\R} u(t,x)dx =0, \quad t>0.
\end{equation}
Moreover, if $u$ and $v$ are two entropy
solutions \eqref{eq:SPE}, \eqref{eq:init-1}, in the sense of Definition \ref{def:sol-1}, the following inequality holds
 \begin{equation}
 \label{eq:stability-1}
\norm{u(t,\cdot)-v(t,\cdot)}_{L^1(-R,R)}\le  e^{C(T) t}\norm{u(0,\cdot)-v(0,\cdot)}_{L^1(-R-C(T)t,R+C(T)t)},
\end{equation}
for almost every $0<t<T$, $R>0$, and some suitable constant $C(T)>0$.
\end{theorem}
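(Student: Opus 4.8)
The plan is to mirror the vanishing-viscosity/compensated-compactness scheme of Section~\ref{sec:1}, now posed on the whole line, and then to obtain uniqueness and the stability estimate \eqref{eq:stability-1} by the \Kruzkovs doubling of variables. First I would fix $0<\eps<1$ and introduce the parabolic--elliptic regularization
\begin{equation*}
\begin{cases}
\pt \ue -\frac{1}{2}\ue^2\px\ue=\gamma\Pe+ \eps\pxx\ue,&\quad t>0,\ x\in\R,\\
-\eps\pxx\Pe+\px\Pe=\ue,&\quad t>0,\ x\in\R,\\
\Pe(t,0)=0,&\quad t>0,\\
\ue(0,x)=u_{\eps,0}(x),&\quad x\in\R,
\end{cases}
\end{equation*}
with $u_{\eps,0}$ a smooth approximation of $u_0$ satisfying the analogues of \eqref{eq:u0eps} with $\R$ in place of $(0,\infty)$. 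The essential difference from \eqref{eq:SPEsw} is that there is no boundary condition on $\ue$: the only normalization is $\Pe(t,0)=0$, and every boundary term evaluated at $x=0$ in Section~\ref{sec:1} is now replaced by a term at $x\to\pm\infty$, which vanishes by the decay $\Pe(t,\pm\infty)=\px\Pe(t,\pm\infty)=0$ (the analogue of Lemma~\ref{lm:cns}).

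Next I would reproduce the a priori estimates. The $L^2$ energy estimate, obtained by multiplying the first equation by $\ue$ and integrating over $\R$, goes through verbatim because $\int_\R \ue^3\px\ue\,dx=0$ and $\int_\R\ue\Pe\,dx\le\norm{\ue(t,\cdot)}^2_{L^2(\R)}$, yielding the analogue of \eqref{eq:stima-l-2}; the $L^\infty$ bound on $\ue$ follows from the parabolic comparison principle exactly as in Lemma~\ref{lm:linfty-u}. Integrating the equations over $\R$ and invoking the decay at infinity gives the conservation $\int_\R\ue(t,x)\,dx=0$ (the analogue of \eqref{eq:int-u}), which upon passing to the limit yields \eqref{eq:u-media-nulla-1}. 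The technical heart is the analogue of Lemma~\ref{lm:P-infty}, namely $\norm{\Pe}_{L^\infty(0,T;L^2(\R))}\le C(T)$: I would repeat the computation differentiating the $P$-equation in $t$, multiplying by $\Pe-\eps\px\Pe$ and integrating over $\R$, so that the many terms carrying $\px\ue(t,0)$ and $\ptx\Pe(t,0)$ simply drop, the boundary contributions being replaced by vanishing limits at $\pm\infty$; the remaining cubic terms are absorbed by the Young and Gronwall inequalities as in \eqref{eq:Young1}--\eqref{eq:00134}, using \eqref{eq:l-2-di-P0-1} to control the initial datum. This furnishes \eqref{eq:P-infty-4} and \eqref{eq:u-infty-5} on $(0,T)\times\R$.

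With the uniform bounds in hand I would pass to the limit by compensated compactness as in Lemma~\ref{lm:conv-u}. Writing, for convex $\eta\in C^2(\R)$, the entropy balance $\pt\eta(\ue)+\px q(\ue)=\eps\pxx\eta(\ue)-\eps\eta''(\ue)(\px\ue)^2+\gamma\eta'(\ue)\Pe$, the first term tends to $0$ in $\Hneg$, the second is bounded in $L^1$, and the third is bounded in $L^1_{loc}$ by \eqref{eq:P-infty-4} and \eqref{eq:u-infty-5}; Murat's Lemma \cite{Murat:Hneg} then places $\{\pt\eta(\ue)+\px q(\ue)\}_{\eps>0}$ in a compact subset of $\Hneg((0,T)\times\R)$, and Tartar's method \cite{TartarI} produces a subsequence $\uek\to u$ a.e.\ with $u\in L^\infty((0,T)\times\R)$. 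Here there is no boundary, so no strong-trace or boundary-entropy step is needed: the limit $u$ is directly a distributional and entropy solution in the sense of Definition~\ref{def:sol-1}, with $\Pek\to P$ and $P(t,x)=\int_0^x u(t,y)\,dy$.

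Finally, for uniqueness and \eqref{eq:stability-1} I would apply the \Kruzkov doubling of variables \cite{K} to two entropy solutions $u,v$ with potentials $P_u,P_v$. The doubling yields the usual \Kruzkov inequality for $\abs{u-v}$ with flux $\sgn{u-v}\bigl(\frac{u^3}{6}-\frac{v^3}{6}\bigr)$ plus the source $\gamma\,\sgn{u-v}\,(P_u-P_v)$. The finite speed of propagation comes from $\abs{\frac{d}{du}(-u^3/6)}=\frac{u^2}{2}\le C(T)$ by \eqref{eq:u-infty-5}, so that localizing against a cutoff adapted to the cone $\{\abs{x}<R+C(T)t\}$ produces the two-sided cone in \eqref{eq:stability-1}; the nonlocal source is controlled by $\px(P_u-P_v)=u-v$, whence $\abs{P_u-P_v}(t,x)\le\int\abs{u-v}(t,\cdot)$ on the relevant interval, and a Gronwall argument closes the estimate with the factor $e^{C(T)t}$. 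The main obstacle is precisely this last point together with the $P$-estimate: controlling the nonlocal potential on the unbounded domain using only the two conserved quantities, and threading the nonlocal source through the doubling argument so that it produces exactly the exponential-in-time growth and the finite propagation cone, as in \cite{CdK, dR}.
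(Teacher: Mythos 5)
Your overall route---vanishing viscosity on the whole line, the same chain of a priori estimates, Murat/Tartar compensated compactness with no trace or boundary-entropy step, and \Kruzkov doubling with the cone determined by $\abs{q'(u)}=\tfrac{u^2}{2}\le C(T)$---is exactly the paper's scheme (the paper delegates the doubling step to \cite[Theorem 3.1]{CdK} and \cite[Theorem 2.3.1]{dR}, which is what your last paragraph reconstructs). There is, however, one genuine gap in your version of the key estimate $\norm{\Pe}_{L^{\infty}(0,T;L^2(\R))}\le C(T)$. You assert that in the whole-line analogue of Lemma \ref{lm:P-infty} ``the many terms carrying $\px\ue(t,0)$ and $\ptx\Pe(t,0)$ simply drop, the boundary contributions being replaced by vanishing limits at $\pm\infty$.'' That is false for the decisive term: multiplying the analogue of \eqref{eq:1554-1} by $\Pe-\eps\px\Pe$ and integrating produces
\begin{equation*}
2\gamma\int_{\R}\Fe\,\Pe\,dx=2\gamma\int_{\R}\Fe\,\px\Fe\,dx=\gamma\bigl(\Fe(t,\infty)\bigr)^2=\gamma\left(\int_{\R}\Pe(t,x)\,dx\right)^{2},
\end{equation*}
and the decay $\Pe(t,\pm\infty)=0$ says nothing about this quantity: a decaying function need not have zero integral, and on an unbounded domain $\int_{\R}\Pe$ is not controlled by $\norm{\Pe(t,\cdot)}_{L^2(\R)}$, so the Young--Gronwall absorption you invoke cannot handle it and the self-improving inequality for $\norm{\Pe}_{L^{\infty}(0,T;L^2(\R))}$ does not close. (In the half-line case this term did \emph{not} drop either; it was cancelled against the explicit boundary terms via \eqref{eq:lim-di-f}, leaving the good-signed quantity $\tfrac{\eps^2}{\gamma}(\ptx\Pe(t,0)+\px\ue(t,0))^2$.)

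The missing idea is precisely the paper's Lemma \ref{lm:p8}: $\Pe(t,\cdot)$ is integrable at $\pm\infty$ (needed even to define $\Fe(t,x)=\int_{-\infty}^{x}\Pe(t,y)dy$, cf.\ \eqref{eq:F1}) and $\int_{\R}\Pe(t,x)\,dx=0$, so the term above vanishes identically, which is \eqref{eq:F-in-infty-1}. The paper proves this by integrating the first equation of \eqref{eq:SPEsw-1} on $(0,x)$ and sending $x\to\pm\infty$: both half-line integrals $\int_{0}^{\pm\infty}\Pe\,dx$ equal the same quantity $a_{\eps}(t)$ built from $\eps\ptx\Pe(t,0)$, $\ue^3(t,0)/6$ and $\eps\px\ue(t,0)$, and these cancel upon summation; equivalently, since $\int_{\R}\ue(t,\cdot)\,dx=0$ by the elliptic equation, integrating the parabolic equation over $\R$ gives $0=\frac{d}{dt}\int_{\R}\ue\,dx=\gamma\int_{\R}\Pe\,dx$. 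Note in particular that the quantities at $x=0$ do not disappear from the whole-line argument by decay: the normalization $\Pe(t,0)=0$ keeps them in play inside $a_{\eps}(t)$, and they drop only through this cancellation. Once this zero-mean lemma is inserted, the remainder of your plan (compactness, passage to the limit in $P$ using $\eps\px\Pe\to0$, and the doubling argument with $\abs{P_u-P_v}(t,x)$ bounded by the $L^1$ norm of $u-v$ over the interval between $0$ and $x$, closed by Gronwall) coincides with the paper's proof.
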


A similar result has been proved in \cite{CdK, dR} in the context of locally bounded solutions.

Our existence argument is based on passing to the limit in a vanishing viscosity approximation of \eqref{eq:SPEw-1}.

Fix a small number $0<\eps<1$, and let $\ue=\ue (t,x)$ be the unique classical solution of the following mixed problem \cite{CHK:ParEll}
\begin{equation}
\label{eq:SPEsw-1}
\begin{cases}
\displaystyle\pt \ue -\frac{1}{2}\ue^2\px\ue=\gamma\Pe+ \eps\pxx\ue,&\quad t>0,\ x\in\R,\\
-\eps\pxx\Pe+\px\Pe=\ue,&\quad t>0,\ x\in\R,\\
\Pe(t,0)=0,&\quad t>0,\\
\ue(0,x)=u_{\eps,0}(x),&\quad x\in\R,
\end{cases}
\end{equation}
where $u_{\eps,0}$ is a $C^\infty$ approximation of $u_{0}$ such that
\begin{equation}
\label{eq:u0eps-1}
\begin{split}
&\norm{u_{\eps,0}}_{L^2(\R)}\le \norm{u_0}_{L^2(\R)}, \quad \norm{u_{\eps,0}}_{L^{\infty}(\R)}\le \norm{u_0}_{L^{\infty}(\R)},\\
&\norm{P_{\eps,0}}_{L^2(\R)}\le \norm{P_{0}}_{L^2(\R)},\quad\eps\norm{\px P_{\eps,0}}_{L^2(\R)}\le C_{0},
\end{split}
\end{equation}
and $C_0$ is a constant independent on $\eps$.

Let us prove some a priori estimates on $\ue$ and $\Pe$, denoting with $C_0$ the constants which depend on the initial datum, and $C(T)$ the constants which depend also on $T$.

Arguing as \cite{Cd} and Section \ref{sec:1}, we obtain the following results

\begin{lemma}
\label{lm:cns-1}
For each $t\in (0,\infty)$,
\begin{align}
\label{eq:P-pxP-intfy-1}
\Pe(t,-\infty)=\px\Pe(t,-\infty)=\Pe(t,\infty)=\px\Pe(t,\infty)=&0,\\
\label{eq:equ-L2-stima-1}
\eps^2\norm{\pxx\Pe(t,\cdot)}^2_{L^2(\R)}+ \norm{\px\Pe(t,\cdot)}^2_{L^2(0,\infty)}=&\norm{\ue(t,\cdot)}^2_{L^2(0,\infty)}.
\end{align}
\end{lemma}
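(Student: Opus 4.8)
The plan is to obtain \eqref{eq:P-pxP-intfy-1} and \eqref{eq:equ-L2-stima-1} as the whole-line analogues of Lemma \ref{lm:cns}, reading the second identity off the elliptic relation $\px\Pe-\eps\pxx\Pe=\ue$ from \eqref{eq:SPEsw-1}. Since the computations mirror Section \ref{sec:1}, I would only track the changes forced by replacing the half-line $(0,\infty)$ with $\R$ and by the fact that now only $\Pe(t,0)=0$ is prescribed at the interior point $x=0$.

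First I would establish the decay \eqref{eq:P-pxP-intfy-1}, exactly as for \eqref{eq:P-pxP-intfy}. Arguing as for \eqref{eq:stima-l-2} gives the a priori bound $\norm{\ue(t,\cdot)}_{L^2(\R)}\le e^{\gamma t}\norm{u_0}_{L^2(\R)}$; inserting this into $-\eps\pxx\Pe+\px\Pe=\ue$ and testing against $\px\Pe$ shows $\px\Pe(t,\cdot),\eps\pxx\Pe(t,\cdot)\in L^2(\R)$, hence $\px\Pe(t,\cdot)\in H^1(\R)$ and therefore $\px\Pe(t,\pm\infty)=0$. The remaining limits $\Pe(t,\pm\infty)=0$ follow from the integrability $\ue(t,\cdot)\in L^1(\R)$ together with the normalisation $\Pe(t,0)=0$, precisely as in Lemma \ref{lm:cns}.

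Next I would derive \eqref{eq:equ-L2-stima-1} as an exact energy identity. I would square the elliptic relation, $(\px\Pe)^2-2\eps\,\px\Pe\,\pxx\Pe+\eps^2(\pxx\Pe)^2=\ue^2$, and integrate it on the two half-lines separately, so as to expose the point $x=0$ where $\px\Pe$ is not controlled. On $(0,\infty)$ the cross term integrates to a boundary contribution, $-2\eps\int_0^\infty\px\Pe\,\pxx\Pe\,dx=\eps(\px\Pe(t,0))^2$ after using $\px\Pe(t,\infty)=0$, which yields
\[
\norm{\px\Pe(t,\cdot)}^2_{L^2(0,\infty)}+\eps(\px\Pe(t,0))^2+\eps^2\norm{\pxx\Pe(t,\cdot)}^2_{L^2(0,\infty)}=\norm{\ue(t,\cdot)}^2_{L^2(0,\infty)}.
\]
The mirror computation on $(-\infty,0)$ produces the same relation with $(0,\infty)$ replaced by $(-\infty,0)$ and with the opposite sign $-\eps(\px\Pe(t,0))^2$ on the boundary term, since there one uses $\px\Pe(t,-\infty)=0$. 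Adding the two half-line identities, the boundary contributions at $x=0$ cancel and \eqref{eq:P-pxP-intfy-1} discards the endpoint terms at $\pm\infty$, which gives \eqref{eq:equ-L2-stima-1}.

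The main obstacle, and the only point genuinely new relative to Lemma \ref{lm:cns}, is the bookkeeping at the interior node $x=0$: because the whole-line problem fixes $\Pe(t,0)=0$ but leaves $\px\Pe(t,0)$ free, one must carry the half-line contributions $\pm\eps(\px\Pe(t,0))^2$ and verify that they cancel upon summation, so that no spurious $(\px\Pe(t,0))^2$ survives in \eqref{eq:equ-L2-stima-1}. Everything else reduces to the decay \eqref{eq:P-pxP-intfy-1}, which must be strong enough to kill the $\pm\infty$ endpoint terms; once it is in hand the identity is exact rather than merely an inequality.
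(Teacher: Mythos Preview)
Your proposal is correct and matches the paper's intended argument, which simply refers back to \cite{Cd} and Section~\ref{sec:1} (square the elliptic relation and integrate, using the decay at infinity). The split at $x=0$ is harmless but unnecessary: since $\px\Pe(t,\cdot)$ is smooth, you may integrate $-\eps\,\px\bigl((\px\Pe)^2\bigr)$ directly over $\R$ and let \eqref{eq:P-pxP-intfy-1} kill the endpoint terms at $\pm\infty$, obtaining \eqref{eq:equ-L2-stima-1} in one stroke; the appearance of $L^2(0,\infty)$ rather than $L^2(\R)$ in the stated identity is an evident typo carried over from Section~\ref{sec:1}.
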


\begin{lemma}
\label{lm:2-1}
For each $t\in(0,\infty)$,
\begin{align}
\label{eq:int-u-1}
\int_{\R}\ue(t,x) dx &=0,\\
\label{eq:L-infty-Px-1}
\sqrt{\eps}\norm{\px\Pe(t, \cdot)}_{L^{\infty}(\R)}&\le \norm{u(t,\cdot)}_{L^2(\R)},\\
\label{eq:uP-1}
\int_{\R}\ue(t,x)\Pe(t,x) dx&\le \norm{u(t,\cdot)}^2_{L^2(\R)}.
\end{align}
\end{lemma}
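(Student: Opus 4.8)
The plan is to read off all three estimates directly from the elliptic identity $\ue=\px\Pe-\eps\pxx\Pe$ (the second equation of \eqref{eq:SPEsw-1}), the $L^2$ balance \eqref{eq:equ-L2-stima-1}, and the decay at $\pm\infty$ recorded in \eqref{eq:P-pxP-intfy-1}. None of the three requires a Gronwall argument; they are pointwise-in-$t$ consequences of the elliptic equation, so the whole lemma parallels its half-line counterpart (Lemma \ref{lm:2}) with the simplification that the boundary now sits only at $\pm\infty$.

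First I would establish \eqref{eq:int-u-1}. Rather than differentiating $\int_{\R}\ue\,dx$ in time, I integrate the elliptic equation over $\R$: since $\ue=\px\Pe-\eps\pxx\Pe$, we obtain $\int_{\R}\ue\,dx=[\Pe]_{-\infty}^{\infty}-\eps[\px\Pe]_{-\infty}^{\infty}$, and both boundary contributions vanish by \eqref{eq:P-pxP-intfy-1}. In contrast to the half-line identity \eqref{eq:int-u}, there is no surviving term at $x=0$ because the integration runs over all of $\R$; this symmetric decay is the only genuine difference from the computation in Section \ref{sec:1}.

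Next, for \eqref{eq:L-infty-Px-1} I would fix $t$ and write, using $\px\Pe(t,-\infty)=0$ from \eqref{eq:P-pxP-intfy-1},
\[
(\px\Pe(t,x))^2=2\int_{-\infty}^{x}\px\Pe\,\pxx\Pe\,dy\le 2\norm{\px\Pe(t,\cdot)}_{L^2(\R)}\norm{\pxx\Pe(t,\cdot)}_{L^2(\R)}
\]
by Cauchy--Schwarz. Multiplying by $\eps$ and applying the Young inequality in the form $2\eps\norm{\px\Pe(t,\cdot)}_{L^2(\R)}\norm{\pxx\Pe(t,\cdot)}_{L^2(\R)}\le \norm{\px\Pe(t,\cdot)}^2_{L^2(\R)}+\eps^2\norm{\pxx\Pe(t,\cdot)}^2_{L^2(\R)}$, the right-hand side is exactly $\norm{\ue(t,\cdot)}^2_{L^2(\R)}$ by \eqref{eq:equ-L2-stima-1}. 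Hence $\eps(\px\Pe(t,x))^2\le\norm{\ue(t,\cdot)}^2_{L^2(\R)}$; taking the supremum over $x$ and a square root yields \eqref{eq:L-infty-Px-1}.

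Finally, for \eqref{eq:uP-1} I substitute $\ue=\px\Pe-\eps\pxx\Pe$ into $\int_{\R}\ue\Pe\,dx$ and integrate by parts. The transport-type term $\int_{\R}\Pe\,\px\Pe\,dx=\tfrac12[\Pe^2]_{-\infty}^{\infty}=0$, and the boundary term $[\Pe\,\px\Pe]_{-\infty}^{\infty}=0$ arising from $-\eps\int_{\R}\Pe\,\pxx\Pe\,dx$ both vanish by \eqref{eq:P-pxP-intfy-1}, leaving $\int_{\R}\ue\Pe\,dx=\eps\norm{\px\Pe(t,\cdot)}^2_{L^2(\R)}$. Since $0<\eps<1$ and $\norm{\px\Pe(t,\cdot)}^2_{L^2(\R)}\le\norm{\ue(t,\cdot)}^2_{L^2(\R)}$ by \eqref{eq:equ-L2-stima-1}, this is bounded by $\norm{\ue(t,\cdot)}^2_{L^2(\R)}$, which is \eqref{eq:uP-1}. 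The only point requiring care throughout is the legitimacy of discarding the boundary terms at $\pm\infty$, and this is precisely what Lemma \ref{lm:cns-1} supplies, so no extra decay hypothesis on $\ue$ is needed and the remaining manipulations are routine integrations by parts.
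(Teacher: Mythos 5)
Your proof is correct and follows essentially the same route the paper intends (it omits the proof, deferring to the analogous computations of Section \ref{sec:1} and \cite{Cd}): all three claims are read off from the elliptic identity $\ue=\px\Pe-\eps\pxx\Pe$ by integration, integration by parts, and the decay \eqref{eq:P-pxP-intfy-1}, with the $L^2$ balance \eqref{eq:equ-L2-stima-1} closing the estimates. The only point worth flagging is that you (rightly) use \eqref{eq:equ-L2-stima-1} with all norms taken over $\R$ rather than $(0,\infty)$ as misprinted in the paper, and your Young-inequality step $\eps(\px\Pe(t,x))^2\le\norm{\px\Pe(t,\cdot)}^2_{L^2(\R)}+\eps^2\norm{\pxx\Pe(t,\cdot)}^2_{L^2(\R)}=\norm{\ue(t,\cdot)}^2_{L^2(\R)}$ is exactly the whole-line analogue of \eqref{eq:2592}.
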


\begin{lemma}
\label{lm:linfty-u-1}
For every $t\in(0,\infty)$,
\begin{equation}
\label{eq:linfty-u-1}
\norm{\ue(t,\cdot)}_{L^\infty(\R)}\le\norm{u_0}_{L^\infty(\R)}+\gamma\int_{0}^{t}\norm{\Pe(s,\cdot)}_{L^{\infty}(\R)}ds.
\end{equation}
\end{lemma}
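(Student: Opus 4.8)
The plan is to establish the $L^\infty$ bound for $\ue$ stated in Lemma \ref{lm:linfty-u-1} by a maximum-principle / comparison argument for the parabolic equation, exactly paralleling the proof of Lemma \ref{lm:linfty-u} on the half-line, but now on all of $\R$. First I would rewrite the first equation of \eqref{eq:SPEsw-1} in the quasilinear form
\begin{equation*}
\pt \ue -\frac{1}{2}\ue^2\px\ue-\eps\pxx\ue = \gamma\Pe,
\end{equation*}
and bound the right-hand side by the sup-norm of the source, $\gamma\Pe \le \gamma\norm{\Pe(t,\cdot)}_{L^\infty(\R)}$. The key observation is that the transport coefficient $-\tfrac{1}{2}\ue^2$ and the viscosity $\eps>0$ make this a (nonlinear) parabolic operator to which a comparison principle applies.

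The central step is to construct an explicit spatially-constant supersolution. I would introduce
\begin{equation*}
\F(t):=\norm{u_0}_{L^\infty(\R)}+\gamma\int_0^t \norm{\Pe(s,\cdot)}_{L^\infty(\R)}\ds,
\end{equation*}
which depends only on $t$, and verify that it solves $\tfrac{d\F}{dt}=\gamma\norm{\Pe(t,\cdot)}_{L^\infty(\R)}$ with $\F(0)=\norm{u_0}_{L^\infty(\R)}\ge \ue(0,x)=u_{\eps,0}(x)$ for all $x$, where the last inequality uses \eqref{eq:u0eps-1}. Since $\F$ is $x$-independent its spatial derivatives vanish, so the parabolic operator applied to $\F$ reduces to $\tfrac{d\F}{dt}=\gamma\norm{\Pe(t,\cdot)}_{L^\infty(\R)}$, which dominates the source $\gamma\Pe$. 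Hence $\F$ is a supersolution lying above $\ue$ initially, and the comparison principle yields $\ue(t,x)\le \F(t)$ everywhere. Running the same argument with $-\F$ as a subsolution for the lower bound gives $\ue(t,x)\ge -\F(t)$, and combining the two bounds produces
\begin{equation*}
\vert\ue(t,x)\vert\le \norm{u_0}_{L^\infty(\R)}+\gamma\int_0^t \norm{\Pe(s,\cdot)}_{L^\infty(\R)}\ds,
\end{equation*}
which is precisely \eqref{eq:linfty-u-1}.

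The only real subtlety compared to the half-line case is the absence of a lateral boundary: on $(0,\infty)$ one had to check the boundary condition $\ue(t,0)=0$ against the supersolution, whereas on $\R$ the comparison principle must instead be justified on the unbounded domain. The main obstacle will therefore be confirming that the maximum principle applies globally in $x$, i.e.\ that no mass escapes to spatial infinity; this is controlled by the decay/integrability of $\ue$ and $\Pe$ guaranteed by \eqref{eq:P-pxP-intfy-1} and the $L^2$-based estimates in Lemmas \ref{lm:cns-1} and \ref{lm:2-1}, which ensure $\ue(t,\cdot),\Pe(t,\cdot)\to 0$ as $\vert x\vert\to\infty$. Given the smoothness of the classical solution and these decay properties, the comparison principle for parabolic equations on $\R$ goes through and the estimate \eqref{eq:linfty-u-1} follows.
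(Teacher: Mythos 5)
Your proposal is correct and follows essentially the same route as the paper: the paper proves the half-line case (Lemma \ref{lm:linfty-u}) by bounding the source by $\gamma\norm{\Pe(t,\cdot)}_{L^{\infty}}$, comparing $\ue$ with the spatially constant barrier ${\mathcal F}(t)=\norm{u_0}_{L^\infty}+\gamma\int_0^t\norm{\Pe(s,\cdot)}_{L^{\infty}}\,ds$ and with $-{\mathcal F}(t)$, and for the Cauchy problem simply states that one argues as in Section \ref{sec:1}, which is exactly your argument. Your additional care in justifying the comparison principle on the unbounded domain via the decay of $\ue$ and $\Pe$ at infinity is a point the paper leaves implicit, and it is correctly handled.
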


\begin{lemma}\label{lm:stima-l-2-1}
For each $t\in(0,\infty)$, the inequality holds
\begin{equation}
\label{eq:stima-l-2-1}
\norm{\ue(t,\cdot)}^2_{L^2(\R)}+ 2\eps e^{2\gamma t}\int_{0}^{\infty}e^{-2\gamma s}\norm{\px\ue(s,\cdot)}^2_{L^2(\R)}ds\le e^{2\gamma t}\norm{u_{0}}^2_{L^2(\R)}.
\end{equation}
In particular, we have
\begin{equation}
\label{eq:10021-1}
\begin{split}
\eps\norm{\pxx\Pe(t,\cdot)}_{L^2(\R)}, \norm{\px\Pe(t,\cdot)}_{L^2(\R)}&\le e^{\gamma t}\norm{u_{0}}_{L^2(\R)},\\
\sqrt{\eps}\norm{\px\Pe(t, \cdot)}_{L^{\infty}(\R)}&\le e^{\gamma t}\norm{u_{0}}_{L^2(\R)}.
\end{split}
\end{equation}
Moreover, we get
\begin{align}
\label{eq:258-1}
\norm{\Pe(t,\cdot)}_{L^{\infty}(\R)}&\le \sqrt{2e^{\gamma t}\norm{u_{0}}_{L^2(0,\infty)}\norm{\Pe(t,\cdot)}_{L^2(\R)}},\\
\label{eq:259-1}
\sqrt{\eps}\vert\px\Pe(t,0)\vert &\le e^{\gamma t}\norm{u_{0}}_{L^2(\R)}.
\end{align}
\end{lemma}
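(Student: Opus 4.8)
The plan is to reproduce the proof of Lemma~\ref{lm:stima-l-2} almost verbatim, the only structural change being that every integration by parts is now performed over the whole line, so that the boundary contributions are annihilated by the decay relations \eqref{eq:P-pxP-intfy-1} rather than by the Dirichlet condition at $x=0$.

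First I would establish the energy estimate \eqref{eq:stima-l-2-1}. Multiplying the first equation of \eqref{eq:SPEsw-1} by $\ue$ and integrating over $\R$ gives
\begin{equation*}
\frac{d}{dt}\int_\R \ue^2\,dx = 2\eps\int_\R \ue\pxx\ue\,dx + \int_\R \ue^3\px\ue\,dx + 2\gamma\int_\R \ue\Pe\,dx .
\end{equation*}
The cubic term equals $\frac14\int_\R \px(\ue^4)\,dx = 0$ by the decay of $\ue$ (in the boundary-value case the same term vanished because of $\ue(t,0)=0$); the viscous term integrates by parts to $-2\eps\norm{\px\ue(t,\cdot)}^2_{L^2(\R)}$ with no surviving boundary term; and the source term is bounded using \eqref{eq:uP-1}. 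This leaves
\begin{equation*}
\frac{d}{dt}\norm{\ue(t,\cdot)}^2_{L^2(\R)} + 2\eps\norm{\px\ue(t,\cdot)}^2_{L^2(\R)} \le 2\gamma\norm{\ue(t,\cdot)}^2_{L^2(\R)},
\end{equation*}
and Gronwall's lemma together with \eqref{eq:u0eps-1} yields \eqref{eq:stima-l-2-1}.

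I would then read off \eqref{eq:10021-1}: the bounds on $\eps\norm{\pxx\Pe(t,\cdot)}_{L^2(\R)}$ and $\norm{\px\Pe(t,\cdot)}_{L^2(\R)}$ follow from the energy identity \eqref{eq:equ-L2-stima-1} combined with \eqref{eq:stima-l-2-1}, while the $L^\infty$ bound on $\px\Pe$ comes from \eqref{eq:L-infty-Px-1} and \eqref{eq:stima-l-2-1}. For \eqref{eq:258-1} I would use $\Pe(t,-\infty)=0$ to write $\Pe^2(t,x)=2\int_{-\infty}^x \Pe\px\Pe\,dy$, bound the right-hand side by $2\norm{\Pe(t,\cdot)}_{L^2(\R)}\norm{\px\Pe(t,\cdot)}_{L^2(\R)}$ via Cauchy--Schwarz, and then insert \eqref{eq:10021-1}. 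Finally \eqref{eq:259-1} is immediate, since $\sqrt{\eps}\,\abs{\px\Pe(t,0)}\le \sqrt{\eps}\,\norm{\px\Pe(t,\cdot)}_{L^\infty(\R)}$, which has already been estimated in \eqref{eq:10021-1}.

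There is no genuine obstacle: the computation is identical to the one-sided case treated in Section~\ref{sec:1}. The only point requiring a little care is checking that the two-sided integrations by parts leave no boundary residue, which is precisely the content of \eqref{eq:P-pxP-intfy-1}; this is also why the Cauchy energy identity \eqref{eq:equ-L2-stima-1} lacks the extra term $\eps(\px\Pe(t,0))^2$ that appears in its boundary-value counterpart \eqref{eq:equ-L2-stima}.
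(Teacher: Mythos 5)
Your proposal is correct, and for the bulk of the lemma it coincides with the paper, which simply says ``arguing as Section \ref{sec:1}'' for \eqref{eq:stima-l-2-1}, \eqref{eq:10021-1} and \eqref{eq:258-1}: your energy computation, the use of \eqref{eq:uP-1}, the vanishing of $\int_\R \ue^3\px\ue\,dx$ by decay instead of the Dirichlet condition, and the derivation of \eqref{eq:258-1} from $\Pe^2(t,x)=2\int_{-\infty}^{x}\Pe\px\Pe\,dy$ are exactly the intended adaptation. The one place you genuinely diverge is \eqref{eq:259-1}, the only estimate the paper proves explicitly: you obtain it in one line from $\sqrt{\eps}\,\abs{\px\Pe(t,0)}\le\sqrt{\eps}\,\norm{\px\Pe(t,\cdot)}_{L^\infty(\R)}$ together with \eqref{eq:10021-1}, whereas the paper squares the elliptic equation in \eqref{eq:SPEsw-1} and integrates over $(-\infty,0)$, producing the half-line identity \eqref{eq:120-7} in which the boundary term $\eps(\px\Pe(t,0))^2$ appears with the right constant, and then invokes \eqref{eq:stima-l-2-1}. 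Both routes are valid; yours is shorter but leans on the sup-norm bound \eqref{eq:L-infty-Px-1}, which in this section is itself only stated by analogy with the half-line case, while the paper's computation is self-contained at the level of the $L^2$ theory and recovers precisely the boundary contribution that is absent from the full-line identity \eqref{eq:equ-L2-stima-1} --- the same structural point you correctly flag at the end of your proposal.
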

\begin{proof}
Arguing as Section \ref{sec:1}, we obtain \eqref{eq:stima-l-2-1}, \eqref{eq:10021-1} and \eqref{eq:258-1}.

Let us show that \eqref{eq:259-1} holds true.
Squaring the equation for $\Pe$ in \eqref{eq:SPEsw-1}, we get
\begin{equation*}
\eps^2(\pxx\Pe)^2+(\px\Pe)^2 - \eps\px((\px\Pe)^2)=\ue^2.
\end{equation*}
An integration on $(-\infty,0)$ and \eqref{eq:P-pxP-intfy-1} give
\begin{equation}
\label{eq:120-7}
\begin{split}
\eps^2\int_{-\infty}^{0}(\pxx\Pe)^2 dx +\int_{-\infty}^{0}(\px\Pe)^2dx+ &\eps(\px\Pe(t,0))^2 \\
&=\int_{-\infty}^{0} \ue^2 dx \le \norm{\ue(t,\cdot)}^2_{L^2(\R)}.
\end{split}
\end{equation}
It follows from \eqref{eq:stima-l-2-1} and \eqref{eq:120-7} that
\begin{equation*}
\eps(\px\Pe(t,0))^2\le e^{2\gamma t}\norm{u_{0}}^2_{L^2(\R)},
\end{equation*}
which gives \eqref{eq:259-1}.
\end{proof}

\begin{lemma}
\label{lm:p8}
For each $t\ge 0$, we have that
\begin{align}
\label{eq:intp-infty}
\int_{0}^{-\infty}\Pe(t,x)dx&=a_{\eps}(t), \\
\label{eq:int+infty}
\int_{0}^{\infty}\Pe(t,x)dx&=a_{\eps}(t),
\end{align}
where
\begin{equation*}
a_{\eps}(t)= \frac{1}{\gamma}\left(\eps\ptx\Pe(t,0)+\frac{1}{6}\ue(t,0)+\eps\px\ue(t,0)\right).
\end{equation*}
Moreover,
\begin{equation}
\label{eq:Pmedianulla}
\int_{\R}\Pe(t,x)dx=0, \quad t\geq 0.
\end{equation}
\end{lemma}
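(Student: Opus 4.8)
The plan is to adapt the computation of Lemma~\ref{lm:def-di-f} to the whole line, exploiting that the first equation of \eqref{eq:SPEsw-1} is in conservation form. Writing $\frac12\ue^2\px\ue=\frac16\px(\ue^3)$ and integrating on $(0,x)$ gives, for every $x\in\R$,
\[
\int_{0}^{x}\pt\ue(t,y)\,dy-\frac{1}{6}\ue^3(t,x)+\frac{1}{6}\ue^3(t,0)-\eps\px\ue(t,x)+\eps\px\ue(t,0)=\gamma\int_{0}^{x}\Pe(t,y)\,dy.
\]
Before taking limits I would first record the values of the tails of $\ue$. Integrating the second equation of \eqref{eq:SPEsw-1} on $(0,x)$ and using $\Pe(t,0)=0$ yields $\Pe(t,x)=\int_0^x\ue(t,y)\,dy+\eps\px\Pe(t,x)-\eps\px\Pe(t,0)$; letting $x\to\pm\infty$ and invoking the decay \eqref{eq:P-pxP-intfy-1} (that $\Pe(t,\pm\infty)=\px\Pe(t,\pm\infty)=0$) gives
\[
\int_{0}^{\infty}\ue(t,y)\,dy=\int_{0}^{-\infty}\ue(t,y)\,dy=\eps\px\Pe(t,0),
\]
which is consistent with \eqref{eq:int-u-1}.

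Next I would pass to the limit $x\to+\infty$ in the integrated first equation. By the regularity and the decay at infinity of the classical solution $\ue$, both $\ue^3(t,x)$ and $\eps\px\ue(t,x)$ vanish as $x\to+\infty$, exactly as in \eqref{eq:1212}; moreover $\int_0^x\pt\ue\,dy\to\frac{d}{dt}\int_0^\infty\ue\,dy=\eps\ptx\Pe(t,0)$ by the tail identity just established. Hence
\[
\eps\ptx\Pe(t,0)+\frac{1}{6}\ue^3(t,0)+\eps\px\ue(t,0)=\gamma\int_{0}^{\infty}\Pe(t,y)\,dy,
\]
which is \eqref{eq:int+infty}, the right-hand flux-and-boundary combination being $\gamma a_\eps(t)$. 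Letting instead $x\to-\infty$ in the very same identity, the boundary terms at $x=0$ are unchanged while $\ue^3(t,x),\eps\px\ue(t,x)\to0$ and $\int_0^x\pt\ue\,dy\to\frac{d}{dt}\int_0^{-\infty}\ue\,dy=\eps\ptx\Pe(t,0)$; this reproduces the identical right-hand side and yields \eqref{eq:intp-infty}.

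Finally, \eqref{eq:Pmedianulla} follows at once by additivity of the integral: since $\int_{-\infty}^{0}\Pe\,dx=-\int_{0}^{-\infty}\Pe\,dx$, we obtain $\int_{\R}\Pe\,dx=\int_{0}^{\infty}\Pe\,dx-\int_{0}^{-\infty}\Pe\,dx=a_\eps(t)-a_\eps(t)=0$. The only delicate point is the justification of the two limits as $x\to\pm\infty$ — namely the vanishing of the cubic flux and of the viscous term $\eps\px\ue$ at both ends, together with the interchange of $\frac{d}{dt}$ with the improper integral of $\ue$ — which rests, exactly as in Lemma~\ref{lm:def-di-f}, on the smoothness of the viscous approximations and the $L^2$ and $L^\infty$ a priori bounds of Lemma~\ref{lm:stima-l-2-1}.
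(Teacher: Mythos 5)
Your argument is correct and is essentially the paper's own proof: you integrate both equations of \eqref{eq:SPEsw-1} on $(0,x)$, identify the tails $\int_0^{\pm\infty}\ue\,dy=\eps\px\Pe(t,0)$ from the decay \eqref{eq:P-pxP-intfy-1}, differentiate in time, let $x\to\pm\infty$ in the integrated momentum equation, and conclude \eqref{eq:Pmedianulla} by the sign cancellation of the two one-sided integrals. Note that your derivation yields $\frac16\ue^3(t,0)$ in $a_\eps(t)$, which agrees with the paper's own computation and shows the uncubed $\frac16\ue(t,0)$ in the lemma's statement to be a misprint.
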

\begin{proof}
We begin by observing that, integrating the second equation of \eqref{eq:SPEsw-1} on $(0,x)$, we have that
\begin{equation}
\label{eq:P-in-0-1}
\int_{0}^{x} \ue(t,y)dy = \Pe(t,x)-\eps\px\Pe(t,x)+\eps\px\Pe(t,0).
\end{equation}
It follows from \eqref{eq:P-pxP-intfy-1} that
\begin{equation}
\label{eq:lim-int-u-1}
\lim_{x\to -\infty}\int_{0}^{x} \ue(t,y)dy=\int_{0}^{-\infty}\ue(t,x)dx =  \eps\px\Pe(t,0).
\end{equation}
Differentiating \eqref{eq:lim-int-u-1} with respect to $t$, we get
\begin{equation}
\label{eq:lim-int-u-in-t}
\frac{d}{dt}\int_{0}^{-\infty}\ue(t,x)dx= \int_{0}^{-\infty}\pt\ue(t,x)dx=\eps\ptx\Pe(t,0).
\end{equation}
Integrating the first equation \eqref{eq:SPEsw-1} on $(0,x)$, we obtain that
\begin{equation}
\begin{split}
\label{eq:int-1-eq-1}
\int_{0}^{x}\pt\ue(t,y) dy &-\frac{1}{6}\ue^3(t,x)+\frac{1}{6}\ue^3(t,0)\\
&-\eps\px\ue(t,x)+ \eps\px\ue(t,0)=\gamma\int_{0}^{x}\Pe(t,y)dy.
\end{split}
\end{equation}
Being $\ue$ a smooth solution of \eqref{eq:SPEsw-1}, we get
\begin{equation}
\label{eq:500-1}
\lim_{x\to-\infty}\Big(-\frac{1}{6}\ue^3(t,x) -\eps\px\ue(t,x)\Big)=0.
\end{equation}
Sending $x\to -\infty$ in \eqref{eq:int-1-eq-1}, for \eqref{eq:lim-int-u-in-t} and \eqref{eq:500-1}, we have
\begin{equation*}
\gamma\int_{0}^{-\infty}\Pe(t,x)dx= \eps\ptx\Pe(t,0) +\frac{1}{6}\ue^3(t,0) + \eps \px\ue(t,0),
\end{equation*}
which gives \eqref{eq:intp-infty}.

Let us show that \eqref{eq:int+infty} holds true. We begin by observing that, for \eqref{eq:P-pxP-intfy-1} and \eqref{eq:P-in-0-1},
\begin{equation*}
\int_{0}^{\infty}\ue(t,x)dx =  \eps\px\Pe(t,0).
\end{equation*}
Therefore,
\begin{equation}
\label{eq:lim-int-u-2}
\lim_{x\to \infty}\int_{0}^{x} \pt\ued(t,y)dy=\int_{0}^{\infty}\pt\ue(t,x)dx =  \eps\ptx\Pe(t,0).
\end{equation}
Again by the regularity of $\ue$,
\begin{equation}
\label{eq:510}
\lim_{x\to\infty}\Big( -\frac{1}{6}\ue^3(t,x)-\eps\px\ue(t,x)\Big)=0.
\end{equation}
It follows from \eqref{eq:int-1-eq-1}, \eqref{eq:lim-int-u-2} and \eqref{eq:510} that
\begin{equation*}
\gamma\int_{0}^{\infty}\Pe(t,x)dx= \eps\ptx\Ped(t,0)+\frac{1}{6}\ue^3(t,0) + \eps\px\ue(t,0),
\end{equation*}
which gives \eqref{eq:int+infty}.

Finally, we prove \eqref{eq:Pmedianulla}. It follows from \eqref{eq:intp-infty} that
\begin{equation*}
\int_{-\infty}^{0}\Pe(t,x)dx = -a_{\eps}(t).
\end{equation*}
Therefore, for \eqref{eq:int+infty},
\begin{align*}
\int_{-\infty}^{0}\Pe(t,x)dx+\int_{0}^{\infty}\Pe(t,x)=\int_{\R} \Pe(t,x)dx =-a_{\eps}(t)+a_{\eps}(t)=0,
\end{align*}
that is \eqref{eq:Pmedianulla}.
\end{proof}
Lemma \ref{lm:p8} says that $\Pe(t,x)$  is integrable at $\pm\infty$.
Therefore, for each $t\ge 0$, we can consider the following function
\begin{equation}
\label{eq:F1}
\Fe(t,x)=\int_{-\infty}^{x}\Pe(t,y)dy.
\end{equation}
\begin{lemma}\label{lm:P-infty-1}
Let $T>0$. There exists a function $C(T)>0$, independent on $\eps$, such that
\begin{equation}
\label{eq:l-2-P-1}
\norm{\Pe}_{L^{\infty}(0,T;L^{2}(\R))}\le C(T).
\end{equation}
In particular, we have that
\begin{align}
\label{eq:L-2-P-1}
\norm{\Pe(t,\cdot)}_{L^2(\R)}&\le C(T),\\
\label{eq:L-2-px-P-1-1}
\eps\norm{\px\Pe(t,\cdot)}_{L^2(\R)}&\le C(T),\\
\label{eq:P-infty-4-1}
\norm{\Pe}_{L^{\infty}((0,T)\times(\R))}&\le C(T),\\
\label{eq:u-infty-5-1}
\norm{\ue}_{L^{\infty}((0,T)\times(\R))}&\le C(T).
\end{align}
Moreover, we get
\begin{equation}
\label{eq:pt-px-P-1}
\eps\left\vert\int_{0}^{t}\!\!\!\int_{\R}\Pe\ptx\Pe ds dx\right\vert\le C(T), \quad t\in (0,T).
\end{equation}
\end{lemma}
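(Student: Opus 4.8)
The plan is to reproduce the argument of Lemma \ref{lm:P-infty}, the only structural novelty being that on the whole line the zero--mean properties of Lemma \ref{lm:p8} eliminate the boundary contributions that in Section \ref{sec:1} produced the $x=0$ dissipation. First I would establish, in place of \eqref{eq:equat-per-P}, a clean evolution equation for $\Pe$. Differentiating \eqref{eq:P-in-0-1} in $t$ and substituting \eqref{eq:int-1-eq-1} expresses $\pt\Pe$ through $\gamma\int_0^x\Pe\,dy$, the base--point quantities $\ue^3(t,0)$, $\eps\px\ue(t,0)$, $\eps\ptx\Pe(t,0)$, and the local terms $\frac16\ue^3$, $\eps\px\ue$, $\eps\ptx\Pe$. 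Writing $\int_0^x\Pe\,dy=\Fe(t,x)-\Fe(t,0)$ with $\Fe$ as in \eqref{eq:F1} and invoking $\gamma\Fe(t,0)=-\gamma a_\eps(t)=-(\eps\ptx\Pe(t,0)+\frac16\ue^3(t,0)+\eps\px\ue(t,0))$ from Lemma \ref{lm:p8}, every base--point term cancels and one is left with
\begin{equation*}
\pt\Pe=\gamma\Fe+\frac16\ue^3+\eps\px\ue+\eps\ptx\Pe.
\end{equation*}

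Next I would multiply this identity by $\Pe-\eps\px\Pe$ and integrate over $\R$, discarding all contributions at $x=\pm\infty$ by the decay \eqref{eq:P-pxP-intfy-1}. The decisive difference from Section \ref{sec:1} is the fate of the nonlocal term: since $\Fe(t,-\infty)=0$ and $\Fe(t,\infty)=\int_\R\Pe\,dx=0$ by \eqref{eq:Pmedianulla}, one has $2\gamma\int_\R\Pe\Fe\,dx=\gamma(\Fe(t,\infty))^2=0$, so no boundary square of the type \eqref{eq:342} is generated and consequently no $\frac{\eps^2}{\gamma}(\ptx\Pe(t,0)+\px\ue(t,0))^2$ appears on the left---this is exactly why Lemma \ref{lm:P-infty-1} has no analogue of \eqref{eq:l-2-px-u}. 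Treating the remaining nonlocal term as in \eqref{eq:346}, namely $-2\eps\gamma\int_\R\px\Pe\Fe\,dx=2\eps\gamma\int_\R\Pe^2\,dx\le2\gamma G(t)$ with $G(t)=\norm{\Pe(t,\cdot)}^2_{L^2(\R)}+\eps^2\norm{\px\Pe(t,\cdot)}^2_{L^2(\R)}$, and using \eqref{eq:347} for the term $2\eps\int_\R\Pe\px\ue\,dx$, I would arrive at
\begin{align*}
\frac{d}{dt}G(t)\le 2\gamma G(t)&+\frac13\int_\R\Pe\,\ue^3\,dx-\frac{\eps}{3}\int_\R\px\Pe\,\ue^3\,dx\\
&-2\eps\int_\R\ue\px\Pe\,dx+2\eps^2\int_\R\px\Pe\px\ue\,dx,
\end{align*}
which is \eqref{eq:348} stripped of its boundary term.

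The four integrals are then bounded by the Young inequality exactly as in \eqref{eq:Young1}--\eqref{eq:Young4}, now calling on the whole--line estimates \eqref{eq:stima-l-2-1}, \eqref{eq:10021-1} and \eqref{eq:258-1}; combining these with \eqref{eq:linfty-u-1} through the Young/Jensen steps \eqref{eq:Young6}--\eqref{eq:u-infty} produces $\frac{d}{dt}G-2\gamma G\le C(T)+C(T)\norm{\Pe}_{L^\infty(0,T;L^2(\R))}$. The Gronwall lemma together with \eqref{eq:u0eps-1} gives $G(t)\le C(T)+C(T)\norm{\Pe}_{L^\infty(0,T;L^2(\R))}$, hence the quadratic inequality $\norm{\Pe}^2_{L^\infty(0,T;L^2(\R))}-C(T)\norm{\Pe}_{L^\infty(0,T;L^2(\R))}-C(T)\le0$, which yields \eqref{eq:l-2-P-1}. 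From the definition of $G$ one reads off \eqref{eq:L-2-P-1} and \eqref{eq:L-2-px-P-1-1}; then \eqref{eq:P-infty-4-1} follows by inserting \eqref{eq:L-2-P-1} into \eqref{eq:258-1}, and \eqref{eq:u-infty-5-1} follows from \eqref{eq:linfty-u-1} and \eqref{eq:P-infty-4-1}.

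Finally, for \eqref{eq:pt-px-P-1} I would multiply the evolution identity by $\Pe$, integrate over $\R\times(0,t)$ and repeat the closing computation of Lemma \ref{lm:P-infty}, again using \eqref{eq:Pmedianulla} to drop the nonlocal and base--point terms and bounding $\eps\int_0^t\norm{\px\ue(s,\cdot)}^2_{L^2(\R)}\,ds$ by \eqref{eq:stima-l-2-1} as in \eqref{eq:ux-12}. The one point requiring genuine care is the bookkeeping of this cancellation: one must verify that Lemma \ref{lm:p8} and the decay \eqref{eq:P-pxP-intfy-1} jointly annihilate every $x=0$ and $x=\pm\infty$ contribution, so that---unlike on the half line---the inequality closes by Gronwall alone with no help from a boundary dissipation.
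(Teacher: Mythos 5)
Your proposal is correct and follows essentially the same route as the paper: you derive the same evolution identity $\pt\Pe-\eps\ptx\Pe=\gamma\Fe+\frac16\ue^3+\eps\px\ue$ (the paper gets it directly by integrating from $-\infty$, you via the base point $0$ plus the cancellation from Lemma \ref{lm:p8} --- the same computation in different bookkeeping), then kill the nonlocal term by $\Fe(t,\infty)=\int_\R\Pe\,dx=0$, run the identical Young/Jensen estimates and the Gronwall-plus-quadratic-inequality closure, and obtain \eqref{eq:pt-px-P-1} by multiplying by $\Pe$ as in Lemma \ref{lm:P-infty}. Your observation that the absence of the boundary dissipation term \eqref{eq:l-2-px-u} on the whole line is forced by \eqref{eq:Pmedianulla} matches the paper exactly.
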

\begin{proof}
Integrating the second equation of \eqref{eq:SPEsw-1} on $(-\infty, x)$, for \eqref{eq:P-pxP-intfy-1},  we have that
\begin{equation}
\label{eq:1550-1}
\int_{-\infty}^{x} \ue(t,y)dy=\Pe(t,x) -\eps\px\Pe(t,x).
\end{equation}
Differentiating \eqref{eq:1550-1} with respect to $t$, we get
\begin{equation}
\label{eq:1551-1}
\frac{d}{dt}\int_{-\infty}^{x} \ue(t,y)dy=\int_{-\infty}^{x}\pt \ue(t,y)dy=\pt\Pe(t,x) -\eps\ptx\Pe(t,x).
\end{equation}
It follows from an integration of the first equation of \eqref{eq:SPEsw-1} on $(-\infty, x)$ and \eqref{eq:F1} that
\begin{equation}
\label{eq:1552-1}
\int_{-\infty}^{x}\pt\ue(t,y)dy -\frac{1}{6}\ue^3(t,x) - \eps\px\ue(t,x)=\gamma\Fe(t,x).
\end{equation}
Due to \eqref{eq:1551-1} and \eqref{eq:1552-1}, we have
\begin{equation}
\label{eq:1554-1}
\pt\Pe(t,x)-\eps\ptx\Pe(t,x) =\gamma\Fe(t,x)+\frac{1}{6}\ue^3(t,x) +\eps\px\ue(t,x).
\end{equation}
Multiplying \eqref{eq:1554-1} by $\Pe - \eps\px\Pe$, we have
\begin{equation}
\label{eq:1555-1}
\begin{split}
(\pt\Pe-\eps\ptx\Pe)(\Pe - \eps\px\Pe)= &\gamma\Fe(\Pe - \eps\px\Pe)\\
&+\frac{1}{6}\ue^3 (\Pe - \eps\px\Pe)+\eps\px\ue(\Pe - \eps\px\Pe).
\end{split}
\end{equation}
Integrating \eqref{eq:1555-1} on $(0,x)$, we  have that
\begin{equation}
\label{eq:1222-1}
\begin{split}
\int_{0}^{x}\pt\Pe\Pe dy&-\eps\int_{0}^{x} \pt\Pe\px\Pe dy\\
&-\eps\int_{0}^{x}\Pe \ptx\Pe dy +\eps^2\int_{0}^{x}\ptx\Pe\px\Pe dy\\
=& \gamma\int_{0}^{x}\Fe\Pe dy - \gamma\eps\int_{0}^{x} \Fe\px\Pe dy\\
&+\frac{1}{6}\int_{0}^{x}\ue^3\Pe dy -\frac{1}{6} \eps \int_{0}^{x}\ue^3\px\Pe dy\\
&+\eps\int_{0}^{x}\px\ue\Pe dy - \eps^2\int_{0}^{x}\px\ue\px\Pe dy.
\end{split}
\end{equation}
We observe that, for \eqref{eq:SPEsw-1},
\begin{equation}
\label{eq:int-by-part-1}
-\eps \int_{0}^{x}\px\Pe\pt\Pe dy=-\eps\Pe\pt\Pe + \eps\int_{0}^{x}\Pe\ptx\Pe dy.
\end{equation}
Therefore, \eqref{eq:1222-1} and \eqref{eq:int-by-part-1} give
\begin{equation}
\begin{split}
\label{eq:1223-1}
\int_{0}^{x}\pt\Pe\Pe dy&+ \eps^2\int_{0}^{x}\ptx\Pe\px\Pe dy\\
=& \eps\Pe\pt\Pe + \gamma\int_{0}^{x}\Fe\Pe dy- \gamma\eps\int_{0}^{x} \Fe\px\Pe dy \\
&+\frac{1}{6}\int_{0}^{x}\ue^3\Pe dy -\frac{\eps}{6} \int_{0}^{x}\ue^3\px\Pe dy\\
&+\eps\int_{0}^{x}\px\ue\Pe dy - \eps^2\int_{0}^{x}\px\ue\px\Pe dy.
\end{split}
\end{equation}
Sending $x\to -\infty$, for \eqref{eq:P-pxP-intfy-1}, we get
\begin{equation}
\label{eq:0012-1}
\begin{split}
\int_{0}^{-\infty}\pt\Ped\Ped dy&+ \eps^2\int_{0}^{-\infty}\ptx\Ped\px\Ped dy\\
=& \gamma\int_{0}^{-\infty}\Fe\Pe dy- \gamma\eps\int_{0}^{-\infty} \Fe\px\Pe dy \\
&+\frac{1}{6}\int_{0}^{-\infty}\ue^3\Pe dy - \frac{\eps}{6} \int_{0}^{-\infty}\ue^3\px\Pe dy\\
&+\eps\int_{0}^{-\infty}\px\ue\Pe dy- \eps\int_{0}^{-\infty}\px\ue\px\Pe dy,
\end{split}
\end{equation}
while sending $x\to\infty$,
\begin{equation}
\label{eq:0013-1}
\begin{split}
\int_{0}^{\infty}\pt\Pe\Pe dy&+ \eps^2\int_{0}^{\infty}\ptx\Pe\px\Pe dy\\
=& \gamma\int_{0}^{\infty}\Fe\Pe dy- \gamma\eps\int_{0}^{\infty} \Fe\px\Pe dy \\
&+\frac{1}{6}\int_{0}^{\infty}\ue^3\Pe dy -\frac{\eps}{6} \int_{0}^{\infty}\ue^3\px\Pe dy\\
&+\eps\int_{0}^{\infty}\px\ue\Pe dy - \eps^2\int_{0}^{\infty}\px\ue\px\Pe dy.
\end{split}
\end{equation}
Since
\begin{align*}
\int_{\R}\Pe\pt\Pe dx &=\frac{1}{2}\frac{d}{dt}\int_{\R}\Pe^2dx,\\
\eps^2\int_{\R}\ptx\Pe\px\Pe dx &= \frac{\eps^2}{2}\frac{d}{dt}\int_{\R}(\px\Pe)^2dx,
\end{align*}
it follows from \eqref{eq:0012-1} and \eqref{eq:0013-1} that
\begin{equation}
\label{eq:12312-1}
\begin{split}
\frac{1}{2}\frac{d}{dt}\int_{\R}\Pe^2dx&+\frac{\eps^2}{2}\frac{d}{dt}\int_{\R}(\px\Pe)^2dx\\
=& \gamma\int_{\R}\Fe\Pe dx - \gamma\eps\int_{\R} \Fe\px\Pe dx\\
&+\frac{1}{6}\int_{\R}\ue^3\Pe dx -\frac{\eps}{6} \int_{\R}\ue^3\px\Pe dx\\
&+\eps\int_{\R}\px\ue\Pe dx - \eps^2\int_{\R}\px\ue\px\Pe dx.
\end{split}
\end{equation}
Due to \eqref{eq:Pmedianulla} and \eqref{eq:F1},
\begin{equation}
\label{eq:F-in-infty-1}
\begin{split}
2\gamma\int_{\R}\Fe\Pe dx&=2\gamma\int_{\R}\Fe\px\Fe dx =\gamma(\Fe(t,\infty))^2\\
&=\gamma\left( \int_{\R} \Pe(t,x)dx \right)^2=0.
\end{split}
\end{equation}
It follows from \eqref{eq:12312-1} and \eqref{eq:F-in-infty-1} that
\begin{equation}
\label{eq:12234}
\begin{split}
&\frac{d}{dt}\left(\int_{\R}\Pe^2dx + \eps^2\int_{\R}(\px\Pe)^2dx\right)\\
&\quad=-2\gamma\eps\int_{\R} \Fe\px\Pe dx +\frac{1}{3}\int_{\R}\ue^3\Pe dx-\frac{\eps}{3} \int_{\R}\ue^3\px\Pe dx\\
&\qquad  +2\eps\int_{\R}\px\ue\Pe dx - 2\eps^2\int_{\R}\px\ue\px\Pe dx.
\end{split}
\end{equation}

Due to \eqref{eq:P-pxP-intfy-1}, \eqref{eq:Pmedianulla} and \eqref{eq:F1},
\begin{equation}
\label{eq:346-1}
-2\eps\gamma\int_{\R}\px\Pe\Fe dx=2\eps\gamma\int_{\R}\Pe\px\Fe dx = 2\eps\gamma\int_{\R} \Pe^2 dx\le 2\gamma \int_{\R} \Pe^2 dx,
\end{equation}
while for \eqref{eq:P-pxP-intfy-1},
\begin{equation}
\label{eq:347-1}
\begin{split}
2\eps\int_{\R}\px\ue\Pe dx=&-2\eps\int_{\R}\ue\px\Pe dx.
\end{split}
\end{equation}
Hence, \eqref{eq:346-1} and \eqref{eq:347-1} give
\begin{align*}
&\frac{d}{dt}\left(\norm{\Pe(t,\cdot)}^2_{L^2(\R)} + \eps^2 \norm{\px\Pe(t,\cdot)}^2_{L^2(\R)}\right)\\
&\quad \le  2\gamma \norm{\Pe(t,\cdot)}^2_{L^2(\R)}+\frac{1}{3}\int_{\R}\ue^3\Pe dx-\frac{\eps}{3} \int_{\R}\ue^3\px\Pe dx\\
&\qquad -2\eps\int_{\R}\ue\px\Pe dx - 2\eps^2\int_{\R}\px\ue\px\Pe dx.
\end{align*}
Due to the Young inequality,
\begin{align*}
&\frac{1}{3}\left\vert\int_{\R}\ue^3\Pe dx\right\vert\le \frac{1}{3}\int_{\R}\vert\Pe\vert\vert\ue\vert \ue^2 dx\le \frac{1}{6}\int_{\R}\Pe^2\ue^2dx + \frac{1}{6}\int_{\R}\ue^4 dx\\
&\qquad \le \frac{1}{6}\norm{\Pe(t,\cdot)}^2_{L^{\infty}(\R)}\norm{\ue(t,\cdot)}^2_{L^2(\R)}+\frac{1}{6}\norm{\ue(t,\cdot)}^2_{L^{\infty}(\R)}\norm{\ue(t,\cdot)}^2_{L^2(\R)},\\     &-\frac{\eps}{3} \int_{\R}\ue^3\px\Pe dx\le \frac{\eps}{3}\left\vert\int_{\R}\ue^3\px\Pe dx\right\vert\le \frac{1}{3}\int_{\R}\vert\eps\px\Pe\vert\vert\ue\vert\ue^2 dx\\
&\qquad \le \frac{\eps^2}{6}\int_{\R}(\px\Pe)^2\ue^2 dx + \frac{1}{6}\int_{\R}\ue^4 dx \le \frac{\eps^2}{6}\norm{\px\Pe(t,\cdot)}^2_{L^{\infty}(\R)}\norm{\ue(t,\cdot)}^2_{L^2(\R)}\\
&\qquad\quad +\frac{1}{6}\norm{\ue(t,\cdot)}^2_{L^{\infty}(\R)}\norm{\ue(t,\cdot)}^2_{L^2(\R)},\\
&-2\eps\int_{\R}\ue\px\Pe dx\le 2\eps\left\vert\int_{\R}\ue\px\Pe dx\right\vert\le  \int_{\R}\left\vert\frac{\ue}{\sqrt{\gamma}}\right\vert\vert 2\sqrt{\gamma}\eps\px\Pe\vert dx\\
&\qquad \le \frac{1}{2\gamma}\norm{\ue(t,\cdot)}^2_{L^2(\R)} +2\gamma\eps^2\norm{\px\Pe(t,\cdot)}^2_{L^{2}(\R)}.
\end{align*}
Therefore, we have that
\begin{align*}
&\frac{d}{dt}\left(\norm{\Pe(t,\cdot)}^2_{L^2(\R)} + \eps^2 \norm{\px\Pe(t,\cdot)}^2_{L^2(\R)}\right)\\
&\quad \le  2\gamma \norm{\Pe(t,\cdot)}^2_{L^2(\R)}+2\gamma\eps^2\norm{\px\Pe(t,\cdot)}^2_{L^{2}(\R)}+ \frac{1}{6}\norm{\Pe(t,\cdot)}^2_{L^{\infty}(\R)}\norm{\ue(t,\cdot)}^2_{L^2(\R)} \\
&\qquad+\frac{\eps^2}{6}\norm{\px\Pe(t,\cdot)}^2_{L^{\infty}(\R)}\norm{\ue(t,\cdot)}^2_{L^2(\R)}+\frac{1}{3}\norm{\ue(t,\cdot)}^2_{L^{\infty}(\R)}\norm{\ue(t,\cdot)}^2_{L^2(\R)}\\  &\qquad + \frac{1}{2\gamma}\norm{\ue(t,\cdot)}^2_{L^2(\R)} + 2\eps^2\int_{\R}\vert\px\ue\vert\vert\px\Pe\vert dx.
\end{align*}
Due to The Young inequality,
\begin{align*}
 2\eps^2\int_{\R}\vert\px\ue\vert\vert\px\Pe\vert dx\le& \eps^2 \norm{\px\ue(t,\cdot)}^2_{L^2(\R)}+\eps^2 \norm{\px\Pe(t,\cdot)}^2_{L^2(\R)}\\
\le & \eps^2 \norm{\px\ue(t,\cdot)}^2_{L^2(\R)}+\norm{\px\Pe(t,\cdot)}^2_{L^2(\R)}.
\end{align*}
Hence,
\begin{align*}
\frac{d}{dt}G(t)-2\gamma G(t)\le&\frac{1}{6}\norm{\Pe(t,\cdot)}^2_{L^{\infty}(\R)}\norm{\ue(t,\cdot)}^2_{L^2(\R)}\\ &+\frac{\eps^2}{6}\norm{\px\Pe(t,\cdot)}^2_{L^{\infty}(\R)}\norm{\ue(t,\cdot)}^2_{L^2(\R)}\\
&+\frac{1}{3}\norm{\ue(t,\cdot)}^2_{L^{\infty}(\R)}\norm{\ue(t,\cdot)}^2_{L^2(\R)}\\
&+\frac{1}{2\gamma}\norm{\ue(t,\cdot)}^2_{L^2(\R)}+\eps^2 \norm{\px\ue(t,\cdot)}^2_{L^2(\R)}\\
&+\norm{\px\Pe(t,\cdot)}^2_{L^2(\R)},
\end{align*}
where
\begin{equation}
\label{eq:def-di-G-1}
G(t)=\norm{\Pe(t,\cdot)}^2_{L^2(\R)} + \eps^2\norm{\px\Pe(t,\cdot)}^2_{L^2(\R)}.
\end{equation}
Thanks to \eqref{eq:stima-l-2-1}, \eqref{eq:10021-1} and \eqref{eq:258-1},
\begin{align*}
\frac{1}{6}\norm{\Pe(t,\cdot)}^2_{L^{\infty}(\R)}\norm{\ue(t,\cdot)}^2_{L^2(\R)}\le& \frac{e^{3\gamma t}}{3}\norm{\Pe(t,\cdot)}_{L^2(\R)}\norm{u_{0}}^3_{L^2(\R)},\\
\frac{\eps^2}{6}\norm{\px\Pe(t,\cdot)}^2_{L^{\infty}(\R)}\norm{\ue(t,\cdot)}^2_{L^2(\R)}\le& \frac{e^{4\gamma t}}{6}\norm{u_{0}}^4_{L^2(\R)},\\
\frac{1}{3}\norm{\ue(t,\cdot)}^2_{L^{\infty}(\R)}\norm{\ue(t,\cdot)}^2_{L^2(\R)}\le &\frac{e^{2\gamma t}}{3}\norm{\ue(t,\cdot)}^2_{L^{\infty}(\R)}\norm{u_{0}}^2_{L^2(\R)},\\
\frac{1}{2\gamma}\norm{\ue(t,\cdot)}^2_{L^2(\R)}\le&\frac{e^{2\gamma t}}{2\gamma}\norm{u_{0}}^2_{L^2(\R)},\\
\norm{\px\Pe(t,\cdot)}^2_{L^2(\R)}\le& e^{2\gamma t}\norm{u_{0}}^2_{L^2(\R)}.
\end{align*}
Thus, we get
\begin{align*}
\frac{d}{dt}G(t)-2\gamma G(t)\le& \frac{e^{3\gamma t}}{3}\norm{\Pe}_{L^{\infty}(0,T;L^2(\R))}\norm{u_{0}}^3_{L^2(\R)}+ \frac{e^{4\gamma t}}{6}\norm{u_{0}}^4_{L^2(\R)}\\
&+\frac{e^{2\gamma t}}{3}\norm{\ue(t,\cdot)}^2_{L^{\infty}(\R)}\norm{u_{0}}^2_{L^2(\R)}+e^{2\gamma t}\norm{u_{0}}^2_{L^2(\R)}\\
&+\frac{e^{2\gamma t}}{2\gamma}\norm{u_{0}}^2_{L^2(\R)}+\eps^2 \norm{\px\ue(t,\cdot)}^2_{L^2(\R)}.
\end{align*}
The Gronwall Lemma, \eqref{eq:u0eps-1} and \eqref{eq:def-di-G-1}  give
\begin{equation}
\label{eq:123-1}
\begin{split}
&\norm{\Pe(t,\cdot)}^2_{L^2(\R)} + \eps^2\norm{\px\Pe(t,\cdot)}^2_{L^2(\R)}\\
&\quad\le\norm{P_{0}}^2_{L^2(\R)}e^{2\gamma t} +C_{0}e^{2\gamma t}+ \frac{C_{0}e^{2\gamma t}}{3}\norm{\Pe}_{L^{\infty}(0,T;L^2(\R))}\int_{0}^{t}e^{\gamma s} ds\\
&\qquad +\frac{C_{0}e^{2\gamma t}}{6}\int_{0}^{t}e^{2\gamma s} ds + \frac{C_{0}e^{2\gamma t}}{3}\int_{0}^{t}\norm{\ue(s,\cdot)}^2_{L^{\infty}(\R)}ds\\
&\qquad +C_{0}e^{2\gamma t}(1+t).
\end{split}
\end{equation}
Due to \eqref{eq:linfty-u-1} and the Young inequality,
\begin{equation}
\label{eq:Young6-1}
\begin{split}
\norm{\ue(t,\cdot)}^2_{L^{\infty}(\R)}\le& \norm{u_0}^2_{L^\infty(\R)}+2\gamma\norm{u_0}_{L^\infty(\R)}\int_{0}^{t}\norm{\Pe(s,\cdot)}_{L^{\infty}(\R)}ds\\
&+\gamma^2\left(\int_{0}^{t}\norm{\Pe(s,\cdot)}_{L^{\infty}(\R)}ds\right)^2\\
\le &2\norm{u_0}^2_{L^\infty(\R)}+ \gamma^2\left(\int_{0}^{t}\norm{\Pe(s,\cdot)}_{L^{\R}(\R)}ds\right)^2.
\end{split}
\end{equation}
It follows from \eqref{eq:258-1}, \eqref{eq:Young6-1} and the Jensen inequality that
\begin{equation}
\label{eq:Jensen12}
\begin{split}
\norm{\ue(t,\cdot)}^2_{L^{\infty}(\R)}\le &2\norm{u_0}^2_{L^\infty(\R)} + \gamma^2 t  \int_{0}^{t}\norm{\Pe(s,\cdot)}^2_{L^{\infty}(\R)}ds\\
\le& C_{0} + \gamma C_{0} t \int_{0}^{t} e^{\gamma s}\norm{\Pe(s,\cdot)}_{L^{2}(\R)} ds.
\end{split}
\end{equation}
Therefore,
\begin{equation}
\label{eq:u-infty-1}
\norm{\ue(t,\cdot)}^2_{L^{\infty}(\R)}\le C_{0} + C(T)\norm{\Pe}_{L^{\infty}(0,T;L^{2}(\R))}.
\end{equation}
\eqref{eq:123-1} and \eqref{eq:u-infty-1} give
\begin{equation}
\label{eq:555-1}
\norm{\Pe(t,\cdot)}^2_{L^2(\R)} + \eps^2\norm{\px\Pe(t,\cdot)}^2_{L^2(\R)}\le C(T) + C(T)\norm{\Pe}_{L^{\infty}(0,T;L^{2}(\R))}.
\end{equation}
It follows from \eqref{eq:555-1} that
\begin{equation*}
\norm{\Pe}^2_{L^{\infty}(0,T;L^2(\R))} -C(T)\norm{\Pe}_{L^{\infty}(0,T;L^{2}(\R))}-C(T) \le 0,
\end{equation*}
which gives \eqref{eq:l-2-P-1}.

\eqref{eq:555-1} and \eqref{eq:l-2-P-1} give \eqref{eq:L-2-P-1} and \eqref{eq:L-2-px-P-1-1}. \eqref{eq:258-1} and \eqref{eq:l-2-P-1} give \eqref{eq:P-infty-4-1}, while \eqref{eq:u-infty-5-1} follows from \eqref{eq:linfty-u-1} and \eqref{eq:P-infty-4-1}.

Finally, arguing as Lemma \ref{lm:P-infty}, we obtain \eqref{eq:pt-px-P-1}.
Therefore, the proof is done.
\end{proof}
Let us continue by proving the existence of  a distributional solution
to  \eqref{eq:SPE}, \eqref{eq:init-1}  satisfying \eqref{eq:SPEentropy-1}.
\begin{lemma}\label{lm:conv-1}
Let $T>0$. There exists a function $u\in L^{\infty}((0,T)\times\R)$ that is a distributional
solution of \eqref{eq:SPEw-1} and satisfies  \eqref{eq:SPEentropy-1} for every convex entropy $\eta\in C^2(\R)$.
\end{lemma}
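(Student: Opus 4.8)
The plan is to reproduce the compensated compactness argument of Lemma~\ref{lm:conv-u}, the essential simplification being that on the whole line there is no boundary at $x=0$, so neither a strong trace nor a boundary entropy condition must be recovered; it suffices to establish the interior entropy inequality \eqref{eq:SPEentropy-1}. I would fix a convex $\eta\in C^2(\R)$ with associated flux $q'(u)=-\frac{u^2}{2}\eta'(u)$, multiply the first equation of \eqref{eq:SPEsw-1} by $\eta'(\ue)$, and use the chain rule to obtain
\begin{equation*}
\pt\eta(\ue)+\px q(\ue)=\eps\pxx\eta(\ue)-\eps\eta''(\ue)(\px\ue)^2+\gamma\eta'(\ue)\Pe.
\end{equation*}

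Next I would bound the three terms on the right over $(0,T)\times\R$, exactly as in Lemma~\ref{lm:conv-u}. Writing $\eps\pxx\eta(\ue)=\px(\eps\eta'(\ue)\px\ue)$, estimate \eqref{eq:stima-l-2-1} together with the uniform bound \eqref{eq:u-infty-5-1} (which confines the range of $\ue$ to a fixed interval $I_T$) gives $\norm{\eps\eta'(\ue)\px\ue}^2_{L^2((0,T)\times\R)}\le\eps\norm{\eta'}^2_{L^\infty(I_T)}C(T)\to 0$, so the first term tends to $0$ in $H^{-1}((0,T)\times\R)$; the same two estimates show the second term $\eps\eta''(\ue)(\px\ue)^2$ is uniformly bounded in $L^1((0,T)\times\R)$; and on any compact $K$, bounds \eqref{eq:P-infty-4-1} and \eqref{eq:u-infty-5-1} yield $\norm{\gamma\eta'(\ue)\Pe}_{L^1(K)}\le\gamma\norm{\eta'}_{L^\infty(I_T)}\norm{\Pe}_{L^\infty((0,T)\times\R)}\abs{K}$. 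Murat's Lemma then places $\{\pt\eta(\ue)+\px q(\ue)\}_{\eps}$ in a compact subset of $\Hneg((0,T)\times\R)$.

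Invoking Tartar's compensated compactness method with the $L^\infty$ bound \eqref{eq:u-infty-5-1}, I would extract a subsequence $\uek\to u$ a.e.\ and in $L^p_{loc}((0,T)\times\R)$, $1\le p<\infty$. To obtain the corresponding convergence $\Pek\to P$, I would pass to the limit in \eqref{eq:P-in-0-1}, using that $\eps\px\Pe\to 0$ in $L^\infty$ by \eqref{eq:10021-1} and $\eps\px\Pe(t,0)\to 0$ by \eqref{eq:259-1}, so that $P(t,x)=\int_0^x u(t,y)\,dy$ with $P(t,0)=0$. These convergences let me pass to the limit in the weak form of \eqref{eq:SPEsw-1} and conclude that $u$ is a distributional solution of \eqref{eq:SPEw-1}. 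Finally, since $-\eps\eta''(\ue)(\px\ue)^2\le 0$ for convex $\eta$, the identity above yields $\pt\eta(\ue)+\px q(\ue)-\gamma\eta'(\ue)\Pe\le\eps\pxx\eta(\ue)$, and passing to the limit (the right-hand side tending to $0$ in $\Hneg$) gives \eqref{eq:SPEentropy-1}.

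The analysis is essentially routine once the a priori bounds of Lemmas~\ref{lm:stima-l-2-1} and \ref{lm:P-infty-1} are available; the one point meriting attention is that the source term $\gamma\eta'(\ue)\Pe$ is only \emph{locally} bounded in $L^1$ (since $\Pe$, though uniformly bounded, need not be integrable in $x$), so the $H^{-1}$ compactness produced by Murat's Lemma is genuinely local --- exactly as in the half-line case, and enough for Tartar's theorem. The absence of boundary terms is what makes this lemma shorter than its counterpart Lemma~\ref{lm:conv}.
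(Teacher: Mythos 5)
Your proposal is correct and follows essentially the same route as the paper: the identical decomposition of $\pt\eta(\ue)+\px q(\ue)$ into the three terms $\eps\pxx\eta(\ue)$, $-\eps\eta''(\ue)(\px\ue)^2$, $\gamma\eta'(\ue)\Pe$, the same bounds via \eqref{eq:stima-l-2-1}, \eqref{eq:u-infty-5-1} and \eqref{eq:P-infty-4-1}, Murat's lemma plus Tartar's compensated compactness for \eqref{eq:convu-1}, and recovery of $\Pek\to P$ by passing to the limit in \eqref{eq:P-u-1} using \eqref{eq:10021-1} and \eqref{eq:259-1}. Your closing observation --- that on the whole line no boundary trace or boundary entropy condition is needed, and that the entropy inequality follows from the sign of $-\eps\eta''(\ue)(\px\ue)^2$ together with the $H^{-1}$ vanishing of the diffusion term --- is exactly how the paper shortens the Cauchy case relative to Lemma \ref{lm:conv}.
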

We  construct a solution by passing
to the limit in a sequence $\Set{u_{\eps}}_{\eps>0}$ of viscosity
approximations \eqref{eq:SPEsw-1}. We use the
compensated compactness method \cite{TartarI}.
\begin{lemma}\label{lm:conv-u-1}
Let $T>0$. There exists a subsequence
$\{\uek\}_{k\in\N}$ of $\{\ue\}_{\eps>0}$
and a limit function $  u\in L^{\infty}((0,T)\times\R)$
such that
\begin{equation}\label{eq:convu-1}
    \textrm{$\uek \to u$ a.e.~and in $L^{p}_{loc}((0,T)\times\R)$, $1\le p<\infty$}.
\end{equation}
In particular, \eqref{eq:u-media-nulla-1} holds true.\\
Moreover, we have
\begin{equation}
\label{eq:conv-P-1}
\textrm{$\Pek \to P$ a.e.~and in $L^{p}_{loc}((0,T);W^{1,p}_{loc}(\R))$, $1\le p<\infty$},
\end{equation}
where
\begin{equation}
\label{eq:tildeu-1}
P(t,x)=\int_0^x u(t,y)dy,\qquad t\ge 0,\quad x\in\R.
\end{equation}
\end{lemma}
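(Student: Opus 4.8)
The plan is to repeat the compensated-compactness scheme from the proof of Lemma~\ref{lm:conv-u}, now posed on the whole line and fed by the estimates of Lemmas~\ref{lm:stima-l-2-1} and~\ref{lm:P-infty-1} in place of their half-line analogues. Fix an arbitrary convex $\eta\in C^2(\R)$ with flux $q$ determined by $q'(u)=-\frac{u^2}{2}\eta'(u)$. Multiplying the first equation of~\eqref{eq:SPEsw-1} by $\eta'(\ue)$ and using the chain rule yields the entropy balance
\begin{equation*}
\pt\eta(\ue)+\px q(\ue)=\eps\pxx\eta(\ue)-\eps\eta''(\ue)(\px\ue)^2+\gamma\eta'(\ue)\Pe,
\end{equation*}
whose three terms I label $\CLea_{1,\eps}$, $\CLea_{2,\eps}$, $\CLea_{3,\eps}$ exactly as before.

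First I would verify the inputs of Murat's Lemma on each strip $(0,T)\times\R$. Writing $\CLea_{1,\eps}=\px(\eps\eta'(\ue)\px\ue)$ and combining the dissipation bound $\eps\int_0^t\norm{\px\ue(s,\cdot)}^2_{L^2(\R)}\,ds\le C(T)$ (a consequence of~\eqref{eq:stima-l-2-1}) with the uniform bound~\eqref{eq:u-infty-5-1}, the $L^2$ norm of $\eps\eta'(\ue)\px\ue$ is $O(\sqrt\eps)$, so $\CLea_{1,\eps}\to0$ in $H^{-1}$. The same two estimates bound $\CLea_{2,\eps}$ uniformly in $L^1((0,T)\times\R)$, while~\eqref{eq:P-infty-4-1} and~\eqref{eq:u-infty-5-1} bound $\CLea_{3,\eps}$ uniformly in $L^1_{loc}$. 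Murat's Lemma then places $\{\pt\eta(\ue)+\px q(\ue)\}_{\eps>0}$ in a compact subset of $\Hneg((0,T)\times\R)$, and the genuine nonlinearity of the cubic flux lets Tartar's theorem produce a subsequence $\uek\to u$ a.e.\ and in $L^p_{loc}$, which is~\eqref{eq:convu-1}.

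For the mean-zero identity~\eqref{eq:u-media-nulla-1} I would pass to the limit in~\eqref{eq:int-u-1}: in contrast with the half-line problem there is no boundary correction, so $\int_\R\ue=0$ holds with no remainder, and the uniform $L^2\cap L^\infty$ control together with the $L^1$ datum furnishes the tail decay needed to carry the limit inside the integral. For~\eqref{eq:conv-P-1} I would begin from the representation~\eqref{eq:1550-1}, $\Pe(t,x)=\int_{-\infty}^x\ue(t,y)\,dy+\eps\px\Pe(t,x)$; estimate~\eqref{eq:10021-1} forces $\eps\px\Pe\to0$ uniformly, and the strong convergence of $\uek$, together with the H\"older inequality on compacts, gives $\Pek\to\int_{-\infty}^x u$. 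Evaluating~\eqref{eq:1550-1} at $x=0$ and using~\eqref{eq:259-1} shows $\int_{-\infty}^0\ue=-\eps\px\Pe(t,0)\to0$, hence $\int_{-\infty}^0 u=0$ and the limit coincides with $P(t,x)=\int_0^x u$ of~\eqref{eq:tildeu-1}; differentiating, $\px\Pek=\uek+\eps_k\pxx\Pek$ with $\eps_k\pxx\Pek=\px(\eps_k\px\Pek)\to0$ in $\D'$ delivers the $W^{1,p}_{loc}$ convergence.

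The step I expect to be the real obstacle is not the $H^{-1}$ compactness, which is mechanical once the a priori bounds are in hand, but the passage to the limit in the nonlocal source $\gamma\eta'(\ue)\Pe$ and in the global integrals at $\pm\infty$. Both require upgrading the local strong convergence of $\uek$ and $\Pek$ to sufficient integrability at infinity so that the tails of $\int_\R\ue$ and of the product do not leak; controlling \emph{both} ends of the line simultaneously, and pinning down the correct normalization of the limit $P$ through the vanishing of $\eps\px\Pe(t,0)$, is exactly where the Cauchy problem differs from Section~\ref{sec:1}.
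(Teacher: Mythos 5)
Your proposal follows the paper's proof essentially verbatim: the same Murat--Tartar compensated-compactness argument for \eqref{eq:convu-1} built on the bounds of Lemmas \ref{lm:stima-l-2-1} and \ref{lm:P-infty-1}, the passage to the limit in \eqref{eq:int-u-1} for \eqref{eq:u-media-nulla-1}, and the integral representation of $\Pe$ combined with the vanishing of $\eps\px\Pe$ (via \eqref{eq:10021-1}) and of $\eps\px\Pe(\cdot,0)$ (via \eqref{eq:259-1}) for \eqref{eq:conv-P-1}. The only cosmetic difference is that you integrate the elliptic equation from $-\infty$ (the paper's \eqref{eq:1550-1}) and then renormalize at $x=0$, whereas the paper works directly with \eqref{eq:P-u-1}; both variants rest on exactly the same estimates.
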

\begin{proof}
Let $\eta:\R\to\R$ be any convex $C^2$ entropy function, and
$q:\R\to\R$ be the corresponding entropy
flux defined by $q'(u)=-\frac{u^2}{2}\eta'(u)$.
By multiplying the first equation in \eqref{eq:SPEsw-1} with
$\eta'(\ue)$ and using the chain rule, we get
\begin{equation*}
    \pt  \eta(\ue)+\px q(\ue)
    =\underbrace{\eps \pxx \eta(\ue)}_{=:\CL_{1,\eps}}
    \, \underbrace{-\eps \eta''(\ue)\left(\px  \ue\right)^2}_{=: \CL_{2,\eps}}
     \, \underbrace{+\gamma\eta'(\ue) \Pe}_{=: \CL_{3,\eps}},
\end{equation*}
where  $\CL_{1,\eps}$, $\CL_{2,\eps}$, $\CL_{3,\eps}$ are distributions.

Arguing as in Lemma \ref{lm:conv-u}, we have that
\begin{align*}
&\textrm{$\CLea_{1,\eps}\to 0$ in $H^{-1}((0,T)\times\R),\quad T>0$}, \\
&\textrm{$\{\CLea_{2,\eps}\}_{\eps>0}$ is uniformly bounded in $L^1((0,T)\times\R), \quad T>0$},\\
&\textrm{$\{\CLea_{3,\eps}\}_{\eps>0}$ is uniformly bounded in $L^1_{loc}((0,T)\times\R),\quad T>0$}.
\end{align*}
Therefore, Murat's lemma \cite{Murat:Hneg} implies that
\begin{equation}
\label{eq:GMC1-1}
    \text{$\left\{  \pt  \eta(\ue)+\px q(\ue)\right\}_{\eps>0}$
    lies in a compact subset of $\Hneg((0,\infty)\times\R)$.}
\end{equation}
\eqref{eq:u-infty-5-1}, \eqref{eq:GMC1-1} and the
 Tartar's compensated compactness method \cite{TartarI} give the existence of a subsequence
$\{\uek\}_{k\in\N}$ and a limit function $  u\in L^{\infty}((0,T)\times\R)$
such that \eqref{eq:convu-1} holds.

\eqref{eq:u-media-nulla-1} follows from \eqref{eq:int-u-1} and \eqref{eq:convu-1}.

Finally, we prove \eqref{eq:conv-P-1}. We begin by observing that, integrating the second equation of \eqref{eq:SPEsw-1} on $(0,x)$, we have
\begin{equation}
\label{eq:P-u-1}
\Pe(t,x)=\int_{0}^{x}\ue(t,y)dy +\eps\px\Pe(t,x) -\eps\px\Pe(t,0).
\end{equation}
Let us show that
\begin{equation}
\label{eq:px-1-1}
\textrm{$\eps\px\Pe\to 0$ in $L^{\infty}((0,T)\times\R)$, $T>0$.}
\end{equation}
It follows from \eqref{eq:10021-1} that
\begin{equation*}
\eps\norm{\px\Pe}_{L^{\infty}((0,T)\times\R)}\leq \sqrt{\eps} e^{\gamma T}\norm{u_0}^2_{L^2(\R)}=\sqrt{\eps}C(T)\to 0,
\end{equation*}
that is \eqref{eq:px-1-1}.\\
We claim that
\begin{equation}
\label{eq:px-0-1}
\textrm{$\eps\px\Pe(\cdot,0)\to 0$ in $L^{\infty}(0,T)$, $T>0$.}
\end{equation}
Due to \eqref{eq:259-1}, we have that
\begin{equation*}
\eps\norm{\px\Pe(\cdot,0)}_{L^{\infty}(0,T)}\leq \sqrt{\eps} e^{\gamma T}\norm{u_0}^2_{L^2(\R)}=\sqrt{\eps}C(T)\to 0,
\end{equation*}
that is \eqref{eq:px-0-1}.\\
Therefore, \eqref{eq:convu-1}, \eqref{eq:P-u-1}, \eqref{eq:px-1-1}, \eqref{eq:px-0-1} and the H\"older inequality give \eqref{eq:conv-P-1}.
\end{proof}

\begin{proof}[Proof of Theorem \ref{th:main-1}]
Lemma \eqref{lm:conv-u} gives the existence of an entropy solution $u$ of \eqref{eq:SPEw-u-1}, or
equivalently \eqref{eq:SPEw-1}. Moreover, it proves that \eqref{eq:u-media-nulla-1} holds true.

We observe that, fixed $T>0$,  the solutions of \eqref{eq:SPEw-u-1}, or
equivalently \eqref{eq:SPEw-1}, are bounded in $(0,T)\times\R$. Therefore, using \cite[Theorem $3.1$]{CdK}, or \cite[Theorem $2.3.1$]{dR}, $u$ is unique and \eqref{eq:stability-1}
holds true.
\end{proof}


\begin{thebibliography}{54}

\bibitem{BRN}
{\sc C. Bardos,  A. Y. Leroux, and J. ~C.N\`ed\`elec.}
\newblock First order quasilinear equations with boundary conditions.
\newblock {\em Comm. Partial Differential Equations 4}, 9:1017--1034, 1979.

\bibitem{B}
{\sc J. C. Brunelli.}
\newblock The short pulse hierarchy.
\newblock {\em J. Math. Phys.} 46:123507, 2005.

\bibitem{CJSW}
{\sc Y. Chung, C. K. R. T. Jones, T. Sch\"afer, and C. E. Wayne.}
\newblock Ultra-short pulses in linear and nonlinear media.
\newblock {\em Nonlinearity}, 18:1351--1374, 2005.

\bibitem{Cd}
{\sc G.~M. Coclite and L. di Ruvo.}
\newblock Wellposedness of bounded solutions of the non-homogeneous initial boundary value problem for the Ostrovsky-Hunter equation.
\newblock {Submitted.}

\bibitem{CdK}
{\sc G.~M. Coclite, L. di Ruvo, and K.~H. Karlsen.}
\newblock Some wellposedness results for the Ostrovsky-Hunter equation.
\newblock {To appear on \emph {Springer Proceedings in Mathematics \& Statistics}}.

\bibitem{CHK:ParEll}
{\sc G.~M. Coclite, H.~Holden, and K.~H. Karlsen.}
\newblock Wellposedness for a parabolic-elliptic system.
\newblock {\em Discrete Contin. Dyn. Syst.}, 13(3):659--682, 2005.

\bibitem{CKK}
{\sc G.~M. Coclite, K.~H. Karlsen,  and Y.-S. Kwon.}
\newblock Initial-boundary value problems for conservation laws with source terms and the Degasperis-Procesi equation.
  \newblock {\em J. Funct. Anal.},  257(12):3823--3857, 2009.

\bibitem{dR}
{\sc L. di Ruvo.}
\newblock Discontinuous solutions for the Ostrovsky--Hunter equation and two phase flows.
\newblock {\em Phd Thesis, University of Bari}, 2013.
\newblock{www.dm.uniba.it/home/dottorato/dottorato/tesi/}.

\bibitem{K}
{\sc S. ~N. \Kruzkov}
\newblock First order quasilinear equations with several independent variables.
\newblock {\em Mat. Sb. (N.S.)}, 81(123), 28:228--255, 1970.

\bibitem{LPS}
{\sc Y. Liu, D. Pelinovsky, and A. Sakovich.}
\newblock Wave breaking in the short-pulse equation.
\newblock {\em Dynamics of PDE}, 6:291--310, 2009.

\bibitem{Murat:Hneg}
{\sc F.~Murat.}
\newblock L'injection du c\^one positif de ${H}\sp{-1}$\ dans ${W}\sp{-1,\,q}$\  est compacte pour tout $q<2$.
\newblock {\em J. Math. Pures Appl. (9)}, 60(3):309--322, 1981.

\bibitem{SS}
{\sc A. Sakovich, S. Sakovich.}
\newblock The short pulse equation is integrable.
\newblock {\em J. Phys. Soc. Jpn.} 74:239--241, 2005.

\bibitem{SS1}
{\sc A. Sakovich  and S. Sakovich}.
\newblock{Solitary wave solutions of the short pulse equation}.
\newblock {\em J. Phys. Soc. Jpn.} 39:361-367, 2006.


\bibitem{SW}
{\sc T.~Sch\"afer, and C.E. Wayne.}
\newblock Propagation of ultra-short optical pulses in cubic nonlinear media.
\newblock {\em Physica D}, 196:90--105, 2004.

\bibitem{TartarI}
{\sc L.~Tartar.}
\newblock Compensated compactness and applications to partial differential   equations.
\newblock In {\em Nonlinear analysis and mechanics: Heriot-Watt Symposium, Vol.  IV},
pages 136--212. Pitman, Boston, Mass., 1979.


\end{thebibliography}
\end{document}